\newtheorem{theorem}{Theorem}
\newtheorem{corollary}[theorem]{Corollary}
\newtheorem{lemma}[theorem]{Lemma}
\newtheorem{proposition}[theorem]{Proposition}
\newtheorem{definition}[theorem]{Definition}
\newtheorem{problem}{Problem}
\newcommand\DD{{\cal D}}
\newcommand\dd{\,\mbox{d}}
\newcommand\NN{{\mathbb N}}
\newcommand\ZZ{{\mathbb Z}}
\newcommand\RR{{\mathbb R}}
\DeclareTextCompositeCommand{\v}{OT1}{l}{l\nobreak\hspace{-.1em}'}
\DeclareTextCompositeCommand{\v}{OT1}{t}{t\nobreak\hspace{-.1em}'\nobreak\hspace{-.15em}}
\begin{document}
\title{Towards characterizing locally common graphs\thanks{The work of the second author has received funding from the European Research Council (ERC) under the European Union's Horizon 2020 research and innovation programme (grant agreement No 648509). This publication reflects only its authors' view; the European Research Council Executive Agency is not responsible for any use that may be made of the information it contains. The first and the second authors were also supported by the MUNI Award in Science and Humanities of the Grant Agency of Masaryk University. The third author was supported in part by the Slovenian Research Agency (research program P1-0383 and research project N1-0160).}}

\author{Robert Hancock\thanks{Institut f\"ur Informatik, University of Heidelberg, Im Neuenheimer Feld 205, 69120, Heidelberg, Germany. Previous affiliation: Faculty of Informatics, Masaryk University, Botanick\'a 68A, 602 00 Brno, Czech Republic. E-mail: {\tt hancock@informatik.uni-heidelberg.de}}\and
        Daniel Kr{\'a}\v{l}\thanks{Faculty of Informatics, Masaryk University, Botanick\'a 68A, 602 00 Brno, Czech Republic, and Mathematics Institute, DIMAP and Department of Computer Science, University of Warwick, Coventry CV4 7AL, UK. E-mail: {\tt dkral@fi.muni.cz}.}\and
        Matja\v z Krnc\thanks{Faculty of Mathematics, Natural Sciences and Information Technologies, University of Primorska, Glagolja\v ska 8, SI-6000 Koper, Slovenia. E-mail:  {\tt matjaz.krnc@upr.si}.}\and
        Jan Volec\thanks{Department of Mathematics, Faculty of Nuclear Sciences and Physical Engineering, Czech Technical University in Prague, Trojanova 13, 120 00 Prague, Czech Republic. E-mail: {\tt jan@ucw.cz}.}}
\date{} 
\maketitle

\begin{abstract}
A graph $H$ is \emph{common}
if the number of monochromatic copies of $H$ in a 2-edge-coloring of the complete graph
is asymptotically minimized by the random coloring.
The classification of common graphs is one of the most intriguing problems in extremal graph theory.
We study the notion of weakly locally common graphs considered by Cs\'oka, Hubai and Lov\'asz [arXiv:1912.02926],
where the graph is required to be the minimizer with respect to perturbations of the random $2$-edge-coloring.
We give a complete analysis of the $12$ initial terms in the Taylor series
determining the number of monochromatic copies of $H$ in such perturbations and
classify graphs $H$ based on this analysis into three categories:
\begin{itemize}
\item graphs of Class I are weakly locally common,
\item graphs of Class II are not weakly locally common, and
\item graphs of Class III cannot be determined to be weakly locally common or not based on the initial $12$ terms.
\end{itemize}
As a corollary, we obtain new necessary conditions on a graph to be common and
new sufficient conditions on a graph to be not common.
\end{abstract}

\section{Introduction}\label{sec:intro}

Ramsey's theorem states that, for any graph $H$,
every 2-edge-coloring of the complete graph $K_n$ contains a monochromatic copy of $H$, provided that $n$ is sufficiently large.
The natural quantitative question stemming from this classical theorem is the following:
\emph{What is the minimum number of monochromatic copies of $H$ contained in a $2$-edge-coloring of $K_n$?
In particular, is the minimum achieved by the random $2$-edge-coloring of $K_n$?}
Our main result
is a complete analysis of the initial $12$ terms of the polynomial determining the number of monochromatic copies of $H$
in a perturbation of the random $2$-edge-coloring of $K_n$.

We next put our results in a broader context.
A graph $H$ is \emph{common} 
if the number of monochromatic copies of $H$
is asymptotically minimized by the random $2$-edge-coloring of $K_n$.
The notion of common graphs originated in the 1980s but can be traced to even older results.
Indeed, the classical result of Goodman~\cite{Goo59} implies that the graph $K_3$ is common,
which led Erd\H{o}s~\cite{Erd62} to conjecture that every complete graph is common;
this conjecture was extended by Burr and Rosta~\cite{BurR80} to all graphs.
Sidorenko~\cite{Sid89} disproved the Burr--Rosta Conjecture by showing that a triangle with a pendant edge is not common, and
around the same time, Thomason~\cite{Tho89} disproved the original conjecture of Erd\H{o}s
by establishing that $K_p$ is not common for any $p\geq4$.
Several additional constructions showing that $K_p$ is not common for $p\geq4$ have since been found~\cite{Tho97,FraR93,Fra02}, and
more generally, Jagger, \v{S}\v{t}ov\'{\i}\v{c}ek and Thomason~\cite{JagST96} showed that
no graph containing a copy of $K_4$ is common.
Determining the asymptotics of the minimum number of monochromatic copies of $K_4$
remains an open problem despite many partial results~\cite{Gir79,Nie,Spe11}.

A characterization of common graphs is one of the most intriguing problems in extremal graph theory;
there is not even a conjecture for a possible characterization of common graphs.
On one hand,
common graphs include odd cycles~\cite{Sid89} and even wheels~\cite{JagST96}, and
additional examples of common graphs can be obtained by certain gluing operations~\cite{JagST96,Sid96}.
Only recently, an example of a common graph with chromatic number larger than $3$ was identified:
the $5$-wheel was shown to be common in~\cite{HatHKNR12} using Razborov's flag algebra method introduced in~\cite{Raz07}.
On the negative side,
Fox~\cite{Fox07} proved that every (connected) non-bipartite graph is a subgraph of a connected graph that is not common.
We also refer the reader to~\cite{CumY11,KraNNVW+} for results on the analogous concept involving more colors.

Common graphs are very closely linked to Sidorenko graphs.
A graph $H$ is \emph{Sidorenko} if the number of copies of $H$ in any graph $G$
is asymptotically bounded from below by the number of copies of $H$ in the random graph of the same density as $G$.
It easily follows that every Sidorenko graph is bipartite and
a convexity argument yields that every Sidorenko graph is common.
A well known conjecture of Sidorenko~\cite{Sid91,Sid93},
which is equivalent to an earlier conjecture of Erd\H{o}s and Simonovits~\cite{ErdS84},
asserts that in fact every bipartite graph is Sidorenko.
So, if true, then every bipartite graph would be common.
Many families of bipartite graphs are known to be Sidorenko~\cite{LiS11,ConFS10,Hat10,Sze15,ConKLL18,ConL17,ConL18},
however, the complete solution of the conjecture seems to be out of reach.

Sidorenko's Conjecture is well-understood in the local setting,
i.e., when perturbations of the random graph with a given edge density are considered.
Lov\'asz~\cite{Lov11} showed that no fixed perturbation decreases the number of copies of a graph $H$
if and only if $H$ is a tree or its girth is even.
In the language of theory of graph limits, which we introduce in Section~\ref{sec:prelim},
this result asserts that for every such graph $H$ and every kernel $U$ with $t(K_2,U)=0$,
there exists $\varepsilon_0>0$ such that
\[t(H,1/2)\le t(H,1/2+\varepsilon U) \quad \mbox{for every }\varepsilon\in (0,\varepsilon_0).\]
Fox and Wei~\cite{FoxW17} strengthened the result of Lov\'asz and proved the following:
for every such graph $H$, there exists $\varepsilon_0>0$ such that
$t(H,1/2)\le t(H,1/2+U)$ for every $U$ with $t(K_2,U)=0$, $\|U\|_{\square}\le\varepsilon_0$ and $\|U\|_{\infty}\le 1/2$.
In other words, any large graph close to a random graph has at least the same density of $H$ as a random graph.

\subsection{Locally common graphs}

We study the local version of the notion of common graphs,
which has recently been introduced by Cs\'oka, Hubai and Lov\'asz~\cite{CsoHL19}.
As in the case of Sidorenko's Conjecture, several notions of locally common graphs can be considered.
The one that we study here is the following notion,
which is referred to as weakly locally common in~\cite{CsoHL19} and
which we simply refer to as to locally common for brevity throughout the paper:
a graph $H$ is \emph{locally common} if for every kernel $U$,
there exists $\varepsilon_0>0$ such that
\[ 2t(H,1/2)\le t(H,1/2+\varepsilon U)+t(H,1/2-\varepsilon U) \quad \mbox{for every }\varepsilon\in (0,\varepsilon_0).\]
Cs\'oka et al.~\cite{CsoHL19} showed that every graph containing $K_4$ is locally common in this sense.
In the sense analogous to that considered by Fox and Wei~\cite{FoxW17},
the result of Franek and R\"odl~\cite{FraR93} yields that $K_4$ is not locally common and
Cs\'oka et al.~\cite{CsoHL19} established that in fact any graph containing $K_4$ is not locally common in this stronger sense.

We provide a strong partial characterization of locally common graphs,
which also suggests that the characterization of common graphs is likely to be very complex.
To be more precise, for every graph $H$ and kernel $U$,
we analyze the function $t(H,1/2+\varepsilon U)+t(H,1/2-\varepsilon U)$,
which is a polynomial of $\varepsilon$, and
give a complete characterization of its possible coefficients up to the term of $\varepsilon^{12}$ (inclusively).
This characterization is presented in Theorems~\ref{thm:deck8}, \ref{thm:deck10} and \ref{thm:deck12}.
In particular, we split graphs $H$ into three classes:
\begin{itemize}
\item graphs of Class I are locally common,
\item graphs of Class II are not locally common, and
\item graphs of Class III admit a kernel $U$ such that
the coefficients of the initial twelve terms in $t(H,1/2+\varepsilon U)+t(H,1/2-\varepsilon U)$ are zero and
there is no kernel $U$ that witnesses that $H$ is not locally common based on one of the initial twelve terms.
\end{itemize}
In other words,
if $H$ is of Class III, then it is not possible to decide whether $H$ is locally common or not solely by analyzing the initial twelve terms in the expression $t(H,1/2+\varepsilon U)+t(H,1/2-\varepsilon U)$.
To establish the classification,
in Section~\ref{sec:kernel},
we develop techniques for constructing kernels $U$ with strong control of the change of the number odd cycles passing through given vertices.
We believe that these techniques will be useful to the further study of locally common graphs and common graphs in general.

While the actual characterization given in Theorems~\ref{thm:deck8}, \ref{thm:deck10} and \ref{thm:deck12} is complex,
which is caused by the involved nature of the problem, we state some of the corollaries here.
Let $C_k\oplus C_{\ell}$ be the graph obtained by identifying one vertex of $C_k$ and one vertex of $C_{\ell}$.
The following sufficient conditions on a graph $H$ to be locally common are implied by our characterization:
\begin{itemize}
\item $H$ contains $C_4$ or $C_6$.
\item $H$ contains $C_8$ and $C_3\oplus C_3$.
\item $H$ contains $C_8$ and two edge-disjoint $C_3$'s but it does not contain $C_3\oplus C_5$.
\end{itemize}
We remark that the first condition was already established by Cs\'oka et al.~\cite[Theorem 4.1]{CsoHL19} who proved the following:
if $H$ is a graph with even girth $g$ that
does not contain two cycles of different odd lengths $\ell_1$ and $\ell_2$ sharing at most one vertex such that $\ell_1+\ell_2\le g$ and
also does not contain two cycles of the same odd length $\ell$ sharing at most one vertex such that $2\ell<g$,
then $H$ is locally common.
On the negative side, Cs\'oka et al.~\cite{CsoHL19} showed that the graph $C_3\oplus C_5$ is not locally common.
More generally, we show that the following are sufficient conditions on a graph $H$ to be not locally common:
\begin{itemize}
\item $H$ contains $C_3\oplus C_5$ but does not contain $C_4$, $C_6$ or $C_3\oplus C_3$.
\item $H$ contains vertex disjoint $C_3$ and $C_5$ but it does not contain $C_4$, $C_6$ or two edge-disjoint $C_3$'s.
\end{itemize}

The examples above may suggest that whether the graph $H$ is locally common or not
is determined by the presence or the absence of particular subgraphs.
While this is indeed the case for subgraphs with at most eight edges,
the situation already becomes more involved when $10$-edge subgraphs are considered.
For example, suppose that a graph $H$ does not contain $C_4$, $C_6$, $C_8$, $C_3\oplus C_3$, $C_3\oplus C_5$, $C_3\oplus C_7$, or $C_3\oplus P_2\oplus C_3$ (the last graph is depicted in Figure~\ref{fig:princ8}), $H$ does contain two edge-disjoint $C_3$'s and $C_{10}$, and
let $s_{33}$, $s_{35}$ and $s_{55}$ be the numbers of subgraphs of $H$ isomorphic to the graphs $C_3\oplus P_4\oplus C_3$, $C_3\oplus P_2\oplus C_5$ and $C_5\oplus C_5$ (see Figure~\ref{fig:princ10}), respectively.
Theorem~\ref{thm:deck10} yields that $H$ is locally common if and only if $4s_{33}s_{55}\ge \left(s_{35}\right)^2$.

This paper is structured as follows. 
In Section~\ref{sec:prelim}, we introduce the notation and basic terminology from the theory of graph limits, and
in Section~\ref{sec:sums},
we prove auxiliary number theory results required to develop our tools presented in Section~\ref{sec:kernel}.
In Sections~\ref{sec:deck8}, \ref{sec:deck10} and \ref{sec:deck12}
we provide classifications of locally common graphs with respect to subgraphs with $8$, $10$ and $12$ edges, respectively, and
we describe which graphs can be concluded to be locally common (Class I), which to be not locally common (Class II), and
which belong to neither of the two classes (Class III).
We finish with presenting two open questions concerning locally common graphs suggested by our work in Section~\ref{sec:conclusion}.
 
 \section{Preliminaries}
\label{sec:prelim}

In this section, we fix notation used throughout the paper.
We start with some basic notation and introduce more specialized notation in subsections.
The set of the first $n$ positive integers is denoted by $[n]$.
All graphs considered here are finite and simple.
If $G$ is a graph, then $V(G)$ and $E(G)$ is the vertex set and the edge set of $G$.
The \emph{order} of $G$, i.e., its number of vertices, is denoted by $|G|$, and
its \emph{size}, i.e., its number of edges, by $\|G\|$.
The complete graph of order $n$ is denoted by $K_n$, the $n$-vertex cycle by $C_n$ and the $n$-edge path by $P_n$.
If $G$ and $H$ are two graphs, then $G\cup H$ is the graph obtained as a disjoint union of $G$ and $H$.
If $G$ and $H$ are two vertex transitive graphs,
then $G\oplus H$ is the graph obtained from $G\cup H$ by identifying one vertex of $G$ with one vertex of $H$, and
$G\oplus P_n\oplus H$ is the graph obtained from $G\cup P_n\cup H$ by identifying one vertex of $G$ with one end-vertex of the path $P_n$ and
one vertex of $H$ with the other end-vertex of $P_n$.
We will also use the notation $G\oplus H$ when one or both $G$ and $H$ are not vertex transitive
if the vertex of $G$ and the vertex of $H$ to be identified are clear from the context.
A \emph{homomorphism} from a graph $H$ to a graph $G$
is a function $f:V(H)\to V(G)$ such that $f(u)f(v)\in E(G)$ for every edge $uv\in E(H)$, and
the \emph{homomorphism density} of $H$ in $G$, which is denoted by $t(H,G)$,
is the probability that a random function from $V(H)$ to $V(G)$ is a homomorphism,
i.e., it is the number of homomorphisms from $H$ to $G$ divided by $|G|^{|H|}$.

\subsection{Decks}
\label{subsec:prelim1}

In this subsection, we introduce notation related to decks of graphs,
which play a crucial role in determining whether a graph is locally common or not.
An \emph{$\ell$-deck} is any multiset of $\ell$-edge graphs, and
the \emph{$\ell$-deck} of a graph $G$, which is denoted by $G[\ell]$,
is the multiset of all $\ell$-edge subgraphs of $G$.
If $\DD$ is an $\ell$-deck and $H$ is an $\ell$-edge graph,
we write $s_{\DD}(H)$ for the number of copies of $H$ that $\DD$ contains.
More generally, we can define $s_{\DD}(H)$ for a graph $H$ with less than $\ell$ edges as
the number of $\|H\|$-edge subgraphs of the graphs in $\DD$ that are isomorphic to $H$.
We next define an $\ell'$-deck $\DD'$ of an $\ell$-deck $\DD$ for $\ell'\le\ell$:
it is simply a union of all $\ell'$-decks of graphs contained in $\DD$ (with their multiplicities).
Note that $s_{\DD'}(H)=s_{\DD}(H)$ for every $\ell'$-edge graph $H$.
Observe that
if $\DD'$ is the $\ell'$-deck of the $\ell$-deck $G[\ell]$ of a graph $G$ on $m$ edges, then
\[s_{\DD'}(H)=\binom{m-\ell'}{\ell-\ell'}s_{G[\ell']}(H)\]
for every $\ell'$-edge graph $H$,
i.e., the multiplicities of graphs in $G[\ell']$ and the $\ell'$-deck of $G[\ell]$
differ by the multiplicative constant independent of $H$.
We will later recall that the $\ell$-th coefficient in the polynomial $t(G,1/2+\varepsilon U)$
is a linear combination of $s_{G[\ell]}(H)$ for $\ell$-edge graphs $H$,
i.e., its sign is the same regardless of
whether we consider it directly with the $\ell$-deck of $G$ or with the $\ell$-deck of another deck of $G$.

\subsection{Graphons and kernels}
\label{subsec:prelim2}

In this part of Section~\ref{sec:prelim}, we introduce basic terminology from the theory of graph limits.
A \emph{graphon} is a measurable function $W:[0,1]^2\to [0,1]$ that is symmetric,
i.e., $W(x,y)=W(y,x)$ for all $(x,y)\in [0,1]^2$.
Intuitively (and quite imprecisely),
a graphon can be thought of as a continuous variant of the adjacency matrix of a graph.
The graphon that is equal to $p\in [0,1]$ everywhere is called the $p$-constant graphon;
when there will be no confusion, we will just use $p$ to denote such a graphon.
The notion of \emph{homomorphism density} extends to graphons by setting
\begin{equation}
t(H,W) := \int_{[0,1]^{V(H)}} \prod_{uv\in E(H)} W(x_u,x_v) \dd x_{V(H)}\label{eq:tHW}
\end{equation}
for a graph $H$ and graphon $W$, and
we define the \emph{density} of a graphon $W$ to be $t(K_2,W)$.

The quantity $t(H,W)$ has a natural interpretation in terms of sampling a random graph according to $W$:
a $W$-random graph of order $n\in\NN$, which is denoted by $G_{n,W}$, is obtained
by sampling $n$ independent uniform random points $x_1,\ldots,x_n$ from the interval $[0,1]$ and
joining the $i$-th and $j$-th vertices of $G$ by an edge with probability $W(x_i,x_j)$.
It can be shown that the following holds for every graph $H$ with probability one:
\[\lim_{n\to\infty}t(H,G_{n,W})=t(H,W).\]
A sequence $(G_i)_{i\in\NN}$ of graphs is \emph{convergent}
if the sequence $(t(H,G_i))_{i\in\NN}$ converges for every graph $H$.
A simple diagonalization argument implies that every sequence of graphs has a convergent subsequence.
We say that a graphon $W$ is a \emph{limit} of a convergent sequence $(G_i)_{i\in\NN}$ of graphs if
\[\lim_{i\to\infty}t(H,G_i)=t(H,W)\]
for every graph $H$.
One of the crucial results in graph limits, due to Lov\'asz and Szegedy~\cite{LovS06}, is that
every convergent sequence of graphs has a limit.
Hence, a graph $H$ is \emph{Sidorenko} if and only if $t(H,W)\ge t(K_2,W)^{\|H\|}$ for every graphon $W$, and
$H$ is \emph{common} if and only if $t(H,W)+t(H,1-W)\ge 2^{1-\|H\|}$ for every graphon $W$.

A perturbation of a graphon can be described by a kernel.
Formally, a \emph{kernel} is a bounded measurable symmetric function $U:[0,1]^2\to\RR$, and
we define the homomorphism density of $H$ in $U$ as in \eqref{eq:tHW}, i.e.,
\begin{equation}
t(H,U) := \int_{[0,1]^{V(H)}} \prod_{uv\in E(H)} U(x_u,x_v) \dd x_{V(H)}.\label{eq:tHU}
\end{equation}
If $W$ is a graphon and $U$ is a kernel, then it holds that
\begin{equation}
t(H,p+\varepsilon U)=p^{\|H\|}+\sum_{k\in[\|H\|]}p^{\|H\|-k}\varepsilon^k\sum_{H'\in H[k]}t(H',U)\label{eq:epsU}
\end{equation}
for every $p\in (0,1)$, see~\cite{Lov11,Sid89} and also~\cite[proof of Proposition 16.27]{Lov12}.
In particular, the $k$-term in \eqref{eq:epsU} depends on the $k$-deck of $H$ and the kernel $U$ only,
which we discuss in more detail in Subsection~\ref{subsec:prelim3}.

The next proposition is implied by~\cite[Equation~(7.22)]{Lov12}.
In particular, if $U$ is a kernel, then $t(C_k,U)=0$ if and only if $U$ is zero.
\begin{proposition}
\label{prop:cycle}
It holds that $t(C_k,U)>0$ for any even cycle $C_k$ and any non-zero kernel $U$.
\end{proposition}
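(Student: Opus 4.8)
The plan is to identify $t(C_k,U)$ with the trace of a power of the self-adjoint integral operator associated to $U$, and then to read off both nonnegativity and the equality case from the spectrum. First I would introduce the operator $T_U\colon L^2[0,1]\to L^2[0,1]$ defined by $(T_Uf)(x)=\int_0^1 U(x,y)f(y)\,\dd y$. Since $U$ is bounded, measurable and symmetric, we have $U\in L^2([0,1]^2)$, so $T_U$ is a compact (Hilbert--Schmidt) self-adjoint operator; by the spectral theorem it admits a real eigenvalue sequence $(\lambda_i)_{i\in\NN}$, counted with multiplicity, with $\sum_i\lambda_i^2=\int_{[0,1]^2}U(x,y)^2\,\dd x\,\dd y<\infty$ and an associated orthonormal eigenbasis.

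The key identity, which is essentially~\cite[Equation~(7.22)]{Lov12} and which I would also justify directly by expanding $U(x,y)=\sum_i\lambda_i\phi_i(x)\phi_i(y)$ in the eigenbasis and substituting into \eqref{eq:tHU}, is that for every even $k=2m$,
\[
t(C_k,U)=\operatorname{Tr}\left(T_U^{\,k}\right)=\sum_i\lambda_i^{\,k}.
\]
An elementary route avoiding the spectral theorem altogether is to let $W$ be the $m$-fold iterated kernel of $U$ (so $W=U$ when $m=1$) and fold the $2m$-cycle in half, which gives $t(C_{2m},U)=\int_{[0,1]^2}W(x,y)^2\,\dd x\,\dd y$ using the symmetry of $W$. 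Either way, since $k$ is even every term $\lambda_i^{\,k}$ is nonnegative, so $t(C_k,U)\ge 0$, and $t(C_k,U)=0$ forces $\lambda_i=0$ for all $i$ (respectively $W=0$ almost everywhere).

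It then remains to conclude that $U$ is the zero kernel. In the spectral formulation, $\lambda_i=0$ for all $i$ immediately gives $T_U=0$. In the elementary formulation, $W=0$ a.e. means $T_U^{\,m}=0$, and since $\|S^2\|=\|S\|^2$ for any bounded self-adjoint operator $S$, iterating yields $\|T_U\|^{2^j}=\|T_U^{\,2^j}\|=0$ for any $2^j\ge m$, so again $T_U=0$. Finally $T_U=0$ means $\int_0^1 U(x,y)f(y)\,\dd y=0$ for every $f\in L^2[0,1]$ and almost every $x$; taking $f$ to run over indicator functions of a countable family generating the Borel $\sigma$-algebra of $[0,1]$ shows $U(x,\cdot)=0$ a.e.\ for a.e.\ $x$, whence $U=0$ a.e.\ by Fubini, i.e.\ $U$ is the zero kernel. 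The only genuinely delicate point is this last implication --- that a nonzero kernel is not annihilated by repeated composition with itself --- and it is precisely here that the symmetry of $U$, equivalently the self-adjointness of $T_U$, is indispensable: without it $T_U$ could be nilpotent. Everything else is a routine unwinding of definitions.
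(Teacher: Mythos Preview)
Your proof is correct and follows exactly the approach the paper points to: the paper does not give its own argument but simply cites~\cite[Equation~(7.22)]{Lov12}, which is precisely the trace identity $t(C_k,U)=\sum_i\lambda_i^{\,k}$ that you derive and exploit. Your alternative elementary route via the iterated kernel $W$ and the identity $t(C_{2m},U)=\|W\|_{L^2}^2$ is a nice self-contained bonus, and your observation that self-adjointness is what rules out nilpotence in the equality case is exactly the point.
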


We will need an extension of homomorphic densities to rooted graphs.
If $H$ is a graph with a distinguished vertex $w$ (the root),
then the \emph{homomorphic density} of $H$ in $U$ is the function $t^H_U:[0,1]\to\RR$ defined as
\[t^H_U(z)=\int_{[0,1]^{V(H)\setminus\{w\}}} \prod_{uw\in E(H)} U(x_u,z) \prod_{\substack{uv\in E(H)\\u,v\not=w}} U(x_u,x_v) \dd x_{V(H)\setminus\{w\}};\]
we will omit displaying the choice of $w$ in the notation as it will always be clear from the context.
In particular, if $H$ is vertex transitive, the choice of the vertex $w$ is irrelevant, and
we can write $t^H_U$ without any danger of confusion.
If $H$ is vertex transitive, then $H\oplus P_n$
is the rooted graph obtained by identifying a vertex of $H$ with one end vertex of the path $P_n$ and
choosing the other end of the path to be the root.
A kernel $U$ can be viewed as an operator, i.e., if $f:[0,1]\to\RR$ is a measurable function,
then $Uf$ is the function defined as
\[(Uf)(z)=\int_{[0,1]}U(z,x)f(x)\dd x.\]
Observe that $t^{H\oplus P_n}_U=U^nt^H_U$, in particular, $t^{H\oplus P_1}=Ut^H_U$.

The following clearly holds for every graph $H$:
\[t(H,U)=\int_{[0,1]}t^H_U(x)\dd x;\]
the choice of the root for the definition of $t^H_U(x)$ is irrelevant for the above identity to hold.
In addition, if $H_1$ and $H_2$ are two rooted graphs and $H_1\oplus H_2$ is obtained by identifying their roots,
it holds that
\begin{equation}
t(H_1\oplus H_2,U)=\int_{[0,1]}t_U^{H_1}(x)t_U^{H_2}(x)\dd x.\label{eq:glue}
\end{equation}
In particular, if $H$ is a vertex-transitive graph, then $t(H\oplus H,U)\ge 0$ and
the equality holds if and only if $t^H_U(x)=0$ for almost every $x\in [0,1]$.
We say that a kernel $U$ is \emph{balanced} if $t_U^{P_1}(x)=0$ for almost every $x\in [0,1]$,
i.e., the perturbation determined by $U$ does not change the degrees of the vertices of a graphon.
The identity \eqref{eq:glue} implies the following.

\begin{proposition}
\label{prop:degreeone}
If a kernel $U$ is balanced and a graph $H$ has a vertex of degree one, then $t(H,U)=0$.
\end{proposition}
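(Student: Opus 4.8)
The plan is to derive Proposition~\ref{prop:degreeone} directly from the gluing identity~\eqref{eq:glue} together with the definition of a balanced kernel. First I would pick a vertex $v$ of $H$ with $\deg_H(v)=1$ and let $w$ be its unique neighbour. Write $H$ as $H_1 \oplus H_2$ where the roots of $H_1$ and $H_2$ are identified at the vertex $w$: take $H_1$ to be the single edge $vw$ rooted at $w$ (so $H_1$ is a copy of $P_1$ with the root at one endpoint), and take $H_2$ to be the graph $H - v$ rooted at $w$. Every edge of $H$ other than $vw$ lies in $H_2$, and $H_1$ contributes exactly the edge $vw$, so this is a legitimate decomposition in the sense required for~\eqref{eq:glue}.

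Then~\eqref{eq:glue} gives
\[
t(H,U) = \int_{[0,1]} t_U^{H_1}(x)\, t_U^{H_2}(x) \dd x.
\]
Since $H_1$ is a single edge rooted at one endpoint, $t_U^{H_1} = t_U^{P_1}$ by definition of the rooted homomorphic density (the non-root vertex $v$ is integrated out against $U(x_v,x)$). Because $U$ is balanced, $t_U^{P_1}(x)=0$ for almost every $x\in[0,1]$, so the integrand vanishes almost everywhere and the integral is $0$. Hence $t(H,U)=0$, as claimed.

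The only point requiring any care — and it is genuinely routine rather than a real obstacle — is checking that the decomposition $H = H_1 \oplus H_2$ matches the hypotheses under which~\eqref{eq:glue} was stated, namely that $H_1$ and $H_2$ are rooted graphs whose roots are identified and whose edge sets partition $E(H)$. This holds by construction: $V(H_1)\cap V(H_2) = \{w\}$, the common root, and $E(H_1)=\{vw\}$, $E(H_2)=E(H)\setminus\{vw\}$. One should also note that the value of $t_U^{H_1}$ does not depend on the (unique) choice of root orientation since $U$ is symmetric, which is already implicit in the paper's convention that the root choice in $t_U^H$ is immaterial for vertex-transitive pieces; here $H_1\cong P_1$ and the relevant identity is simply $t_U^{P_1}(x) = (U\mathbf 1)(x) - \tfrac12$ up to the normalisation used, but all we need is that it is the balanced-kernel quantity that vanishes a.e. With these observations the proof is complete in a couple of lines.
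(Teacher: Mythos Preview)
Your argument is correct and is exactly the route the paper intends: the paper simply states that Proposition~\ref{prop:degreeone} follows from the gluing identity~\eqref{eq:glue}, and your decomposition $H=P_1\oplus(H-v)$ at the neighbour $w$ of the leaf $v$ spells this out precisely. One small slip to fix in your closing parenthetical: for a kernel $U$ one has $t_U^{P_1}(x)=(U\mathbf 1)(x)=\int_{[0,1]}U(x,y)\dd y$, with no $-\tfrac12$ term (you may be conflating the kernel $U$ with the graphon $\tfrac12+U$), though as you note this does not affect the proof.
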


We conclude this subsection with the following proposition on balanced kernels.

\begin{proposition}
\label{prop:balanced}
Let $U$ be a balanced kernel.
It holds that
\[\int_{[0,1]} (Uf)(x)\dd x=0\]
for every $f\in L_2[0,1]$.
\end{proposition}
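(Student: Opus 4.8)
The plan is to unwind the definition of a balanced kernel and then apply Fubini's theorem together with the symmetry of $U$. Recall that $P_1$ is the single edge with one endpoint designated as the root; taking $H=P_1$ in the definition of $t^H_U$ gives $t_U^{P_1}(z)=\int_{[0,1]}U(x,z)\dd x=(U\mathbf 1)(z)$, where $\mathbf 1$ denotes the constant function equal to $1$ on $[0,1]$ (here we used that $U(x,z)=U(z,x)$). Thus the hypothesis that $U$ is balanced is precisely the statement that $(U\mathbf 1)(z)=0$ for almost every $z\in[0,1]$, i.e.\ that the perturbation does not change degrees.

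Now fix $f\in L_2[0,1]$. Since $[0,1]$ has finite measure we have $f\in L_1[0,1]$, and since $U$ is bounded and measurable, the map $(x,y)\mapsto U(x,y)f(y)$ is integrable on $[0,1]^2$. Hence Fubini's theorem applies, and using $U(x,y)=U(y,x)$ we get
\begin{align*}
\int_{[0,1]}(Uf)(x)\dd x
&=\int_{[0,1]}\int_{[0,1]}U(x,y)f(y)\dd y\dd x\\
&=\int_{[0,1]}f(y)\left(\int_{[0,1]}U(y,x)\dd x\right)\dd y\\
&=\int_{[0,1]}f(y)\,(U\mathbf 1)(y)\dd y .
\end{align*}
Since $(U\mathbf 1)(y)=0$ for almost every $y$, the last integral is $0$. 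Equivalently, one may phrase this as: viewing $U$ as a bounded self-adjoint operator on $L_2[0,1]$, $\int_{[0,1]}(Uf)=\langle Uf,\mathbf 1\rangle=\langle f,U\mathbf 1\rangle=0$ because $U\mathbf 1=0$ in $L_2$.

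There is essentially no obstacle here: the only point that needs any care is the applicability of Fubini, which is immediate from the boundedness of $U$ and the integrability of $f$ on the unit interval. The one step worth spelling out explicitly is the identification of $t_U^{P_1}$ with $U\mathbf 1$ (the ``degree function'' of the perturbation), since that is what connects the hypothesis ``$U$ is balanced'' to the integrand appearing in the conclusion.
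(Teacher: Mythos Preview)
Your proof is correct, and it is in fact more elementary than the paper's. The paper invokes the spectral decomposition of $U$ as a compact self-adjoint (Hilbert--Schmidt) operator on $L_2[0,1]$, writes $U(x,y)=\sum_i\lambda_i f_i(x)f_i(y)$, and then argues that balancedness forces each eigenfunction $f_i$ to be orthogonal to the constant function $h\equiv 1$, whence $Uf$ is orthogonal to $h$ for every $f$. You bypass the spectral theorem entirely: you just use Fubini together with the symmetry of $U$ to rewrite $\int_{[0,1]}(Uf)=\langle Uf,\mathbf 1\rangle=\langle f,U\mathbf 1\rangle$ and then invoke the definition of balanced directly as $U\mathbf 1=0$ a.e. Your route is shorter and needs no operator-theoretic input; the paper's route makes the eigenstructure explicit (which fits its later constructions involving eigenfunctions), but for this proposition that machinery is not needed.
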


\begin{proof}
Since $U$ viewed as an operator on $L_2[0,1]$ is self-adjoint and compact (as all Hilbert-Schmidt integral operators are),
there exists a finite or countable set $I$, non-zero reals $\lambda_i$ and orthonormal functions $f_i\in L_2[0,1]$ such that
\[U(x,y)=\sum_{i\in I}\lambda_i f_i(x)f_i(y).\]
Let $h\in L_2[0,1]$ be the function equal to one everywhere on $[0,1]$.
Since $U$ is balanced, it holds that $Uh$ is equal to zero almost everywhere.
Hence, the following holds for every $i\in I$:
\[0=\int_{[0,1]} f_i(x)(Uh)(x)\dd x=\lambda_i \int_{[0,1]} f_i(x)h(x)\dd x,\]
i.e., $f_i$ and $h$ are orthogonal.
It follows that $Uf$ is orthogonal to $h$ for every $f\in L_2[0,1]$ and the proposition follows.
\end{proof}

\subsection{Perturbations}
\label{subsec:prelim3}

We next analyze the dependence of the density of $G$ in $1/2+\varepsilon U$ on a kernel $U$ and $\varepsilon$.
First observe that if $U$ is a kernel, then it holds by \eqref{eq:epsU} that
\begin{align}
& t(G,1/2+\varepsilon U)+t(G,1/2-\varepsilon U)\nonumber\\
& =2^{-\|G\|+1}+\sum_{\ell\in[\|G\|]}2^{-\|G\|+\ell}\left(\sum_{H\in G[\ell]}t(H,U)+t(H,-U)\right)\varepsilon^\ell.\label{eq:Taylor0}
\end{align}
If the number of edges of a graph $H$ is odd, then $t(H,U)=-t(H,-U)$.
In particular, the coefficients at odd powers of $\varepsilon$ in \eqref{eq:Taylor0} are equal to zero.
Hence, we set
\[c^G_{U,\ell}=\sum_{H\in G[\ell]}t(H,U)\]
for a kernel $U$, a graph $G$ and an (even) integer $\ell$, and observe that
\begin{equation}
t(G,1/2+\varepsilon U)+t(G,1/2-\varepsilon U)=2^{-\|G\|+1}\left(1+\sum_{\substack{\ell\in[\|G\|]\\ \mbox{$\ell$ even}}}2^{\ell}c^G_{U,\ell}\varepsilon^\ell\right).
\label{eq:Taylor}
\end{equation}
We emphasize that the coefficient $c^G_{U,\ell}$ depends on a kernel $U$ and the $\ell$-deck $\DD$ of $G$ only.
Hence, we define
\[c^{\DD}_{U,\ell'}=\sum_{H\in\DD}\sum_{H'\in H[\ell']}t(H',U)\]
for an $\ell$-deck $\DD$ and an even positive integer $\ell'\le\ell$.
Observe that if $\DD$ is the $\ell$-deck of a graph $G$,
then the coefficients $c^G_{U,\ell'}$ and $c^{\DD}_{U,\ell'}$ have the same sign for all $\ell'=2,4,\ldots,\ell$.
Also observe that the value of $c^{\DD}_{U,\ell}$ is determined by $s_{\DD}(H)$ for all $\ell$-edge graphs $H$,
i.e., it holds that
\begin{equation}
c^{\DD}_{U,\ell'}=\sum_{H,\|H\|=\ell'}s_{\DD}(H)t(H,U)
\label{eq:coeff}
\end{equation}
for every $\ell$-deck $\DD$ and every even positive integer $\ell'\le\ell$.

The above leads to the following classification of $\ell$-decks, which we have already mentioned in Section~\ref{sec:intro}.
Let $\ell$ be an even integer.
An $\ell$-deck $\DD$ is of 
\begin{description}
\item[Class I] 
if for every non-zero kernel $U$,
not all of the coefficients $c^{\DD}_{U,2},\ldots$, $c^{\DD}_{U,\ell}$ are zero and
the first non-zero coefficient among $c^{\DD}_{U,2},\ldots$, $c^{\DD}_{U,\ell}$ is positive;
\item[Class II]
if there exists a (non-zero) kernel $U$ such that
not all of the coefficients $c^{\DD}_{U,2},\ldots$, $c^{\DD}_{U,\ell}$ are zero and
the first non-zero coefficient among $c^{\DD}_{U,2},\ldots$, $c^{\DD}_{U,\ell}$ is negative;
\item[Class III]
if there exists a non-zero kernel $U$ such that
all the coefficients $c^{\DD}_{U,2},\ldots$, $c^{\DD}_{U,\ell}$ are zero, and
for every (non-zero) kernel $U$, it holds that
either all the coefficients $c^{\DD}_{U,2},\ldots$, $c^{\DD}_{U,\ell}$ are zero or
the first non-zero coefficient among $c^{\DD}_{U,2},\ldots$, $c^{\DD}_{U,\ell}$ is positive.
\end{description}
In particular, the following holds for every graph $G$ with $m$ edges and every $\ell\le m$:
every graph $G$ such that its $\ell$-deck is of Class I is locally common,
every graph $G$ such that its $\ell$-deck is of Class II is not locally common, and
every graph $G$ such that its $\ell$-deck is of Class III and $\ell\in\{m-1,m\}$ is locally common;
in particular our results give a full characterization of graphs with up to $13$ edges into Class I or Class II.
If the $\ell$-deck is of Class III and $\ell<m-1$, then it cannot be decided based on the $\ell$-deck whether $G$ is locally common or not;
in particular if a $12$-deck is of Class III and $m>13$, then $G$ is of Class III in the sense defined in Section~\ref{sec:intro}.
We remark that in our analysis above
we have used that the multiplicities of $\ell'$-edge graphs $H$ in the $\ell'$-deck of $G$ and in the $\ell'$-deck of $G[\ell]$
differ by the same multiplicative constant independent of $H$.
Also observe that if the $\ell'$-deck of an $\ell$-deck $\DD$, $\ell'\le\ell$, is of Class I, then $\DD$ is also of Class I, and
if the $\ell'$-deck of $\DD$ is of Class II, then $\DD$ is also of Class II.

\begin{proposition}
\label{prop:coeff2}
It holds that $c^{\DD}_{U,2}\ge 0$ for every $\ell$-deck $\DD$ and every kernel $U$.
Moreover, if $s_{\DD}(P_2)>0$, then $c^{\DD}_{U,2}=0$ if and only if the kernel $U$ is balanced.
\end{proposition}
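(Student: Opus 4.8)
The plan is to make the coefficient $c^{\DD}_{U,2}$ completely explicit via \eqref{eq:coeff} and then argue term by term. Up to isomorphism there are exactly two graphs with two edges, the path $P_2$ and the matching $K_2\cup K_2$, so
\[c^{\DD}_{U,2}=s_{\DD}(P_2)\,t(P_2,U)+s_{\DD}(K_2\cup K_2)\,t(K_2\cup K_2,U).\]
Since the multiplicities $s_{\DD}(P_2)$ and $s_{\DD}(K_2\cup K_2)$ are non-negative by definition, the first assertion reduces to checking that $t(P_2,U)\ge 0$ and $t(K_2\cup K_2,U)\ge 0$ for every kernel $U$. For the matching this is immediate, since the integral in \eqref{eq:tHU} factorizes over the two components and gives $t(K_2\cup K_2,U)=t(K_2,U)^2$. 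For the path, I would use that $P_2=P_1\oplus P_1$ (identify an endpoint of one copy of $P_1$ with an endpoint of the other); then \eqref{eq:glue} yields $t(P_2,U)=\int_{[0,1]}\bigl(t_U^{P_1}(x)\bigr)^2\dd x\ge 0$. This proves $c^{\DD}_{U,2}\ge 0$.

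For the ``moreover'' part, assume $s_{\DD}(P_2)>0$. If $U$ is balanced, then $t_U^{P_1}=0$ almost everywhere, so the path term vanishes, and moreover $t(K_2,U)=\int_{[0,1]}t_U^{P_1}(x)\dd x=0$, so the matching term $t(K_2,U)^2$ vanishes as well; hence $c^{\DD}_{U,2}=0$. Conversely, suppose $c^{\DD}_{U,2}=0$. Since it is a sum of two non-negative terms and $s_{\DD}(P_2)>0$, the path term must be zero, i.e.\ $\int_{[0,1]}\bigl(t_U^{P_1}(x)\bigr)^2\dd x=0$; this forces $t_U^{P_1}(x)=0$ for almost every $x$, which is exactly the definition of $U$ being balanced.

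I do not expect a genuine obstacle here: the whole statement is a short computation once the two two-edge graphs are identified. The only points needing a little care are (i) writing $t(P_2,U)$ as a genuine square (so that its vanishing forces $t_U^{P_1}\equiv 0$ a.e., not merely $\int t_U^{P_1}=0$), and (ii) noting that the degenerate case $s_{\DD}(P_2)=0$ — which happens precisely when every graph in $\DD$ is a matching — is harmless, as then only the non-negative $K_2\cup K_2$ term survives and the equivalence in the ``moreover'' part is vacuous.
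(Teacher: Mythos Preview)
Your proof is correct and essentially identical to the paper's: both expand $c^{\DD}_{U,2}$ as $s_{\DD}(P_2)\int (t_U^{P_1})^2 + s_{\DD}(K_2\cup K_2)\,t(K_2,U)^2$, observe both terms are non-negative, and use that the $P_2$ term vanishes exactly when $t_U^{P_1}=0$ almost everywhere. The only difference is cosmetic notation (the paper writes $P_1\cup P_1$ for your $K_2\cup K_2$).
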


\begin{proof}
The definition of the coefficient $c^{\DD}_{U,2}$ yields that
\[c^{\DD}_{U,2}=s_{\DD}(P_1\cup P_1)\left(\int_{[0,1]}t_U^{P_1}(x)\dd x\right)^2+s_{\DD}(P_2)\int_{[0,1]}t_U^{P_1}(x)^2\dd x.\]
It follows that the coefficient $c^{\DD}_{U,2}$ is always non-negative.

It remains to prove the second part of the proposition.
So, suppose that $s_{\DD}(P_2)>0$.
If the kernel $U$ is balanced, then both integrals above are zero.
On the other hand, if $c^{\DD}_{U,2}=0$, then the second integral above must be zero,
which is possible only if $t_U^{P_1}(x)=0$ for almost every $x\in [0,1]$.
We conclude that $c^{\DD}_{U,2}=0$ if and only if the kernel $U$ is balanced.
\end{proof}

Proposition~\ref{prop:degreeone} and the characterization results obtained in Theorems~\ref{thm:deck8}, \ref{thm:deck10} and \ref{thm:deck12}
lead us to the definition of a principal graph:
\begin{definition}
A graph $H$ is \emph{principal}
if either $H$ is an even cycle or $H$ has minimum degree two and every block of $H$ is an odd cycle or an edge.
\end{definition}
Principal graphs with four, six, eight, ten and twelve edges are listed in Figures~\ref{fig:princ4},
\ref{fig:princ6}, \ref{fig:princ8}, \ref{fig:princ10} and \ref{fig:princ12}, respectively.
For a deck $\DD$ with $s_{\DD}(P_2)>0$,
let $g$ be the length of the shortest even cycle that a graph in $\DD$ contains, and
let $\DD'$ be the $g'$-deck $\DD$, where $g'\leq g$ is even.
Then the frequencies of principal graphs of $\DD'$ determine its class.
Indeed, consider a non-zero kernel $U$.
Since $s_{\DD}(P_2)>0$, it holds that $c^U_{\DD,2}>0$ by Proposition~\ref{prop:coeff2} unless $U$ is balanced.
If $U$ is balanced, then $t(H,U)=0$ for every graph $H$ with vertex of degree one by Proposition~\ref{prop:degreeone},
i.e., the non-zero contribution to the sum \eqref{eq:coeff} defining $c^U_{\DD,4},\ldots,c^U_{\DD,g'}$
comes only from subgraphs with minimum degree two.
Since all graphs of $\DD'$ that have minimum degree two are principal,
the class of $\DD'$ is determined by its principal graphs.

\section{Sums of powers}
\label{sec:sums}

In order to present our main tool (Lemma \ref{lm:core}) in Section \ref{sec:kernel},
we need to state some number-theoretic results. 
The main result of this section is Lemma \ref{lm:powers}, the proof of which follows from 
Lemmas \ref{lm:powers-zero} and \ref{lm:powers-small}. 

\begin{lemma}
\label{lm:powers-zero}
For every pair of odd integers $k_0$ and $k$ such that $3\le k_0\le k$,
there exists an integer $m$ and reals $\omega_1,\ldots,\omega_m$ such that
\[\sum_{i\in [m]}\omega_i^{\ell}=0\]
for every odd integer $\ell\not=k_0$, $3\le\ell\le k$, and
\[\sum_{i\in [m]}\omega_i^{k_0}>0.\]
\end{lemma}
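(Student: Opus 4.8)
The plan is to turn this into a small linear-algebra exercise over $\mathbb{Q}$. Write $L=\{3,5,7,\dots,k\}$ for the set of odd exponents in the range, so $|L|=(k-1)/2=:r$ and $k_0\in L$. I would look for a configuration supported on \emph{signed} copies of the $r$ fixed distinct positive integers $x_1=1<x_2=2<\dots<x_r=r$: first solve, allowing \emph{rational} weights, the requirement that the weighted power sums vanish at every exponent of $L$ except $k_0$ and equal $1$ at $k_0$; then use that every exponent involved is \emph{odd} to absorb a negative weight $c_j$ by flipping the sign of $x_j$; and finally clear denominators to replace rational weights by integer multiplicities, which are what $m$ counts.

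Concretely, the steps are as follows. (1) Consider the $r\times r$ matrix $M=(x_j^{\ell})_{\ell\in L,\,j\in[r]}$. It is invertible: otherwise some nonzero vector $(a_\ell)_{\ell\in L}$ satisfies $\sum_{\ell\in L}a_\ell x_j^{\ell}=0$ for all $j\in[r]$, i.e.\ the nonzero polynomial $q(x)=\sum_{\ell\in L}a_\ell x^{\ell}$ vanishes at the $r$ distinct positive numbers $x_1,\dots,x_r$, which is impossible since Descartes' rule of signs bounds the number of positive roots of $q$ by $|L|-1=r-1$. (2) Let $c=(c_j)_{j\in[r]}$ be the unique solution of $Mc=e_{k_0}$; since $M$ has integer entries, $c\in\mathbb{Q}^r$, and $c\neq 0$ because $e_{k_0}\neq 0$. (3) Pick a positive integer $N$ with $Nc_j\in\ZZ$ for every $j$. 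For each $j$ with $c_j\neq0$ set $\omega^{(j)}:=\operatorname{sign}(c_j)\,x_j$ and take it with multiplicity $|Nc_j|\in\NN$; let $\omega_1,\dots,\omega_m$ be the resulting list, so $m=\sum_{j:\,c_j\neq0}|Nc_j|\ge1$. Since $\operatorname{sign}(c_j)^{\ell}=\operatorname{sign}(c_j)$ for odd $\ell$,
\[\sum_{i\in[m]}\omega_i^{\ell}=\sum_{j:\,c_j\neq0}|Nc_j|\,\operatorname{sign}(c_j)^{\ell}\,x_j^{\ell}=N\sum_{j\in[r]}c_j x_j^{\ell},\]
which equals $0$ for every $\ell\in L\setminus\{k_0\}$ and equals $N>0$ for $\ell=k_0$, as required.

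I do not expect a genuine obstacle. The two points needing a little care are the invertibility of the generalized Vandermonde matrix $M$ (handled above by Descartes' rule of signs, though one could instead invoke the classical positivity of such determinants, or that $\{x^{\ell}\}$ is a Chebyshev system on $(0,\infty)$) and the sign manipulation in step (3), which is precisely where the hypothesis that $k_0$ and all the exponents $\ell$ are odd enters. Note also that the degenerate case $k=k_0=3$ is covered automatically: then $r=1$, $M=(1)$, $c_1=1$, and the construction returns $m=1$ and $\omega_1=1$.
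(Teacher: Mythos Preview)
Your proof is correct and follows essentially the same route as the paper: set up an $r\times r$ linear system in the nodes $1,\dots,r$ with the odd exponents $3,5,\dots,k$, solve it over $\mathbb{Q}$, clear denominators, and absorb the sign of each weight into the node using that all exponents are odd. The only notable difference is the invertibility argument for $M$: the paper factors out $j^3$ from the $j$-th column to reduce to a classical Vandermonde matrix, whereas you invoke Descartes' rule of signs; both are short and standard.
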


\begin{proof}
Consider the square matrix $A$ of order $(k-1)/2$ such that $A_{ij}=j^{2i+1}$ for $i,j\in [(k-1)/2]$ and
the square matrix $B$ of the same order such that $B_{ij}=j^{2i-2}$ for $i,j\in [(k-1)/2]$.
The matrix $B$ is a Vandermonde matrix and so it is full rank.
Since the matrix $A$ can be obtained from the matrix $B$ by multiplying its $j$-th column by $j^3$,
the matrix $A$ is also full rank.
It follows that there exists a rational vector $z\in\RR^{(k-1)/2}$ such that
the vector $Az$ is the $(k_0-1)/2$-th unit vector,
i.e., $(Az)_{(k_0-1)/2}=1$ and $(Az)_i=0$ for $i\not=(k_0-1)/2$.
Hence, there exists an integer vector $z'\in\ZZ^{(k-1)/2}$ such that
$(Az')_{(k_0-1)/2}>0$ and $(Az')_i=0$ for $i\not=(k_0-1)/2$.
We set $m=|z'_1|+\cdots+|z'_{(k-1)/2}|$ and
consider the multiset of $m$ reals that
contains $j$ with multiplicity $z'_j$ if $z'_j\ge 0$ and $-j$ with multiplicity $-z'_j$ if $z'_j<0$ for each $j\in [(k-1)/2]$.
Setting $\omega_1,\ldots,\omega_m$ to be the elements of this multiset yields the statement of the lemma.
\end{proof}

\begin{lemma}
\label{lm:powers-small}
For every pair of odd integers $k_0$ and $k$ such that $3\le k_0\le k$ and every positive real $\delta>0$,
there exists an integer $m$ and reals $\omega_1,\ldots,\omega_m$ such that
\[\sum_{i\in [m]}\omega_i^{\ell}=0\]
for every odd integer $\ell\not=k_0$, $3\le\ell\le k$, and
\[\sum_{i\in [m]}\omega_i^{k_0}=1\qquad\mbox{and}\qquad\sum_{i\in [m]}\omega_i^{k+1}\le\delta.\]
\end{lemma}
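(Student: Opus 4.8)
The plan is to deduce Lemma~\ref{lm:powers-small} from Lemma~\ref{lm:powers-zero} by a scaling argument combined with taking many copies. Fix odd integers $3\le k_0\le k$ and a real $\delta>0$. First I would apply Lemma~\ref{lm:powers-zero} to obtain an integer $m$ and reals $\tau_1,\ldots,\tau_m$ such that $\sum_{i\in[m]}\tau_i^{\ell}=0$ for every odd integer $\ell\not=k_0$ with $3\le\ell\le k$, and such that $c:=\sum_{i\in[m]}\tau_i^{k_0}>0$. I would also set $d:=\sum_{i\in[m]}\tau_i^{k+1}$ and note that $d\ge 0$, since $k+1$ is even so each summand $\tau_i^{k+1}$ is nonnegative.

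Next I would record the effect of two elementary operations on a multiset of reals: rescaling every element by a real $t$ multiplies the $\ell$-th power sum by $t^{\ell}$, and taking $n$ disjoint copies multiplies it by $n$. So, for a positive integer $n$ and a real $t>0$, let $M_{n,t}$ denote the multiset consisting of $n$ copies of each of $t\tau_1,\ldots,t\tau_m$, so that $|M_{n,t}|=nm$ and $\sum_{\omega\in M_{n,t}}\omega^{\ell}=nt^{\ell}\sum_{i\in[m]}\tau_i^{\ell}$ for every $\ell$. In particular this power sum equals $0$ for every odd $\ell\not=k_0$ with $3\le\ell\le k$, it equals $nt^{k_0}c$ for $\ell=k_0$, and it equals $nt^{k+1}d$ for $\ell=k+1$. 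Now, given a positive integer $n$, I would choose $t=(nc)^{-1/k_0}$, which is well defined because $c>0$; then $nt^{k_0}c=1$, and the $(k+1)$-th power sum becomes
\[
nt^{k+1}d=n(nc)^{-(k+1)/k_0}d=n^{1-(k+1)/k_0}\,c^{-(k+1)/k_0}\,d.
\]
Since $k+1>k\ge k_0$, the exponent $1-(k+1)/k_0$ is negative, so this quantity tends to $0$ as $n\to\infty$; as it is also nonnegative (because $d\ge 0$ and $k+1$ is even), I would fix $n$ large enough that it is at most $\delta$, and then take $\omega_1,\ldots,\omega_{nm}$ to be the elements of $M_{n,t}$ for this $n$ and $t$, which gives the lemma.

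There is no genuinely hard step; the only idea needed is the observation that a single rescaling cannot independently normalize the $k_0$-th power sum and make the $(k+1)$-th one small, whereas rescaling $n$ copies and letting $n$ grow does, precisely because $k+1>k_0$ makes the extra factor $n^{1-(k+1)/k_0}$ decay. Note finally that the one-sided bound $\sum_{i}\omega_i^{k+1}\le\delta$ demanded by the statement (rather than a bound on the absolute value) is automatic, since $k+1$ is even.
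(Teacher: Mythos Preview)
Your proof is correct and follows essentially the same approach as the paper: apply Lemma~\ref{lm:powers-zero}, then take many rescaled copies so that the $k_0$-th power sum is normalized to $1$ while the $(k+1)$-th power sum decays because $k+1>k_0$. The only cosmetic difference is that the paper takes the number of copies to be $2^{nk_0}$ and scales by $\Omega^{-1/k_0}2^{-n}$, whereas you allow an arbitrary positive integer $n$ and scale by $(nc)^{-1/k_0}$; substituting $n=2^{n'k_0}$ into your formula recovers the paper's choice exactly.
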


\begin{proof}
Apply Lemma~\ref{lm:powers-zero} with $k_0$ and $k$ to obtain $\omega_1,\ldots,\omega_{m'}$ such that
\[\sum_{i\in [m']}\omega_i^{\ell}=0\]
for every odd integer $\ell\not=k_0$, $3\le\ell\le k$, and
\[\sum_{i\in [m']}\omega_i^{k_0}=\Omega>0.\]
For an integer $n\in N$, consider a multiset $A$ of $m=2^{nk_0}m'$ numbers that
contains each of the numbers $\omega_i\Omega^{-1/k_0}2^{-n}$, $i\in [m']$, with multiplicity $2^{nk_0}$.
Clearly, the sum of the $k_0$-th powers of the numbers in $A$ is equal to one and
the sum of the $\ell$-th powers for odd $\ell\not=k_0$, $3\le\ell\le k$, is equal to zero.
The sum of the $(k+1)$-th powers can be bounded as follows:
\[\sum_{\omega\in A}\omega^{k+1}=\frac{2^{nk_0}}{\Omega^{(k+1)/k_0}2^{n(k+1)}}\sum_{i\in [m']}\omega_i^{k+1}\le \frac{1}{\Omega^{(k+1)/k_0}2^n}\sum_{i\in [m']}\omega_i^{k+1}.\]
Hence, there exists $n\in\NN$ such that the sum is at most $\delta$ and
the lemma holds for the multiset $A$ for such a choice of $n$.
\end{proof}

\begin{lemma}
\label{lm:powers}
For every odd integer $k\ge 3$, all reals $s_3,s_5,\ldots,s_k$ and every positive real $\delta>0$,
there exists an integer $m$ and reals $\omega_1,\ldots,\omega_m$ such that
\[\sum_{i\in [m]}\omega_i^{\ell}=s_{\ell}\]
for every odd integer $\ell$, $3\le\ell\le k$, and
\[\sum_{i\in [m]}\omega_i^{k+1}\le\delta.\]
\end{lemma}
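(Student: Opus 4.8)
The plan is to obtain Lemma~\ref{lm:powers} from Lemma~\ref{lm:powers-small} by superposition: build one multiset for each prescribed odd power $k_0\in\{3,5,\ldots,k\}$, scale it to produce exactly $s_{k_0}$ in that power and $0$ in every other odd power up to $k$, and then take the (multiset) union of all these pieces. Concretely, for each odd $k_0$ with $3\le k_0\le k$ I would apply Lemma~\ref{lm:powers-small} with this $k_0$, with $k$, and with a small tolerance $\delta_{k_0}>0$ to be fixed later, obtaining reals $\omega^{(k_0)}_1,\ldots,\omega^{(k_0)}_{m_{k_0}}$ whose $k_0$-th powers sum to $1$, whose $\ell$-th powers sum to $0$ for every odd $\ell\ne k_0$ in $[3,k]$, and whose $(k+1)$-th powers sum to at most $\delta_{k_0}$. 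Then I would rescale each such family by the factor $\lambda_{k_0}:=\operatorname{sgn}(s_{k_0})\,|s_{k_0}|^{1/k_0}$, i.e.\ replace $\omega^{(k_0)}_i$ by $\lambda_{k_0}\omega^{(k_0)}_i$; since $k_0$ is odd, the $k_0$-th powers of the rescaled family sum to $\lambda_{k_0}^{k_0}=s_{k_0}$, while all other odd powers up to $k$ still sum to $0$.

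The final multiset is the union over all odd $k_0\in[3,k]$ of these rescaled families, and $m:=\sum_{k_0}m_{k_0}$. Summing odd powers is additive over the union, so for each odd $\ell\in[3,k]$ only the block with $k_0=\ell$ contributes, giving $\sum_i\omega_i^{\ell}=s_{\ell}$ exactly, as required. For the $(k+1)$-th power (which is even, so all contributions are non-negative and genuinely add up) the union gives
\[
\sum_{i\in[m]}\omega_i^{k+1}=\sum_{k_0}\lambda_{k_0}^{k+1}\sum_{i\in[m_{k_0}]}\bigl(\omega^{(k_0)}_i\bigr)^{k+1}\le\sum_{k_0}|s_{k_0}|^{(k+1)/k_0}\,\delta_{k_0}.
\]
Now I would simply choose each $\delta_{k_0}$ small enough — for instance $\delta_{k_0}\le\delta\bigl/\bigl((k+1)\,(1+|s_{k_0}|)^{(k+1)/k_0}\bigr)$ — so that the right-hand side is at most $\delta$. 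This proves the lemma.

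There is essentially no serious obstacle here: Lemma~\ref{lm:powers-small} already does all the hard work of hitting one power exactly while killing the others, and the only things to be careful about are (i) that rescaling by a negative factor works because every prescribed power is odd, so signs are reproduced faithfully, and (ii) that the error term in the $(k+1)$-th power, being a sum of non-negative terms over the disjoint blocks, can be controlled by choosing the individual tolerances $\delta_{k_0}$ small after the scaling factors $\lambda_{k_0}$ are known. The mild point to state cleanly is just the bookkeeping of the additivity of power sums over a multiset union together with the observation that Lemma~\ref{lm:powers-small}'s vanishing conclusion for odd $\ell\ne k_0$ is preserved under scaling; once that is in place the choice of $\delta_{k_0}$ is routine.
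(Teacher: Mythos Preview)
Your proposal is correct and follows essentially the same approach as the paper: both proofs apply Lemma~\ref{lm:powers-small} once for each odd $k_0\in\{3,5,\ldots,k\}$, rescale by the real $k_0$-th root of $s_{k_0}$, take the multiset union, and choose the individual tolerances so that the $(k+1)$-th power sums add up to at most $\delta$. The only cosmetic differences are that the paper sets $m_\ell=0$ when $s_\ell=0$ (you scale by $0$ instead) and uses the specific tolerance $\delta/(k\,s_\ell^{(k+1)/\ell})$, whereas you phrase the sign handling via $\operatorname{sgn}(s_{k_0})$ and pick a slightly different explicit $\delta_{k_0}$.
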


\begin{proof}
For $\ell=3,5,\ldots,k$,
if $s_{\ell}\not=0$,
we apply Lemma~\ref{lm:powers-small} with $\frac{\delta}{ks_{\ell}^{(k+1)/\ell}}$
to get $m_{\ell}$ and $\omega_{\ell,1},\ldots,\omega_{\ell,m_{\ell}}$ such that
\[\sum_{i\in [m_{\ell}]}\omega_{\ell,i}^{j}=0\]
for every odd integer $j\not=\ell$, $3\le j \le k$, and
\[\sum_{i\in [m_{\ell}]}\omega_{\ell,i}^{\ell}=1\qquad\mbox{and}\qquad\sum_{i\in [m_{\ell}]}\omega_{\ell,i}^{k+1}\le\frac{\delta}{ks_{\ell}^{(k+1)/\ell}}.\]
If $s_{\ell}=0$, we set $m_{\ell}=0$.

We set $m=m_3+\cdots+m_k$ and
consider the multiset of $m$ reals $\omega_1,\ldots,\omega_m$ that
consists of $\omega_{\ell,i}s_{\ell}^{1/\ell}$ for $\ell=3,\ldots,k$ and $i\in [m_\ell]$.
Observe that for every $\ell=3,\ldots,k$, it holds that
\[\sum_{i\in [m]}\omega_i^{\ell}=\sum_{i\in [m_\ell]}\left(\omega_{\ell,i}s_{\ell}^{1/\ell}\right)^\ell
                                =s_\ell\sum_{i\in [m_\ell]}\omega_{\ell,i}^\ell=s_\ell.\]
In addition, it holds that
\[\sum_{i\in [m]}\omega_i^{k+1}=\sum_{\ell=3,5,\ldots,k}s_{\ell}^{(k+1)/\ell}\sum_{i\in [m_\ell]}\omega_{\ell,i}^{k+1}
                               \le\sum_{\ell=3,5,\ldots,k}\frac{\delta}{k}\le\delta.\]
The lemma follows.
\end{proof}

\section{Constructing balanced perturbations}
\label{sec:kernel}

In this section we construct a special family of perturbations and
determine the corresponding densities of key principal graphs.
The construction is presented in the following lemma.

\begin{lemma}
\label{lm:core}
Let $k\ge 3$ be an odd integer, let $\delta\in (0,1)$ be a positive real, let $m$ be a non-negative integer, and
let $\sigma_i$, $\gamma_{\ell}$ and $\tau_{i,\ell}$, $i\in [m]$ and $\ell=3,5,\ldots,k$, be any reals
such that $\sigma_1^{k+1}+\cdots+\sigma_m^{k+1}\le\delta/2$.
There exists a non-zero balanced kernel $U$,
orthonormal functions $f_1,\ldots,f_m\in L_2[0,1]$ and $g_3,\ldots,g_k\in L_2[0,1]$ and
a real $\gamma>0$ such that
$f_i$, $i\in [m]$, is an eigenfunction of $U$ associated with $\sigma_i$, i.e., $Uf_i=\sigma_if_i$, and
\[\int_{[0,1]}f_i(x)\dd x=0\]
for every $i\in [m]$,
the functions $g_\ell$, $\ell=3,5,\ldots,k$, belong to the kernel of $U$, and
\[\int_{[0,1]}g_{\ell}(x)\dd x=\gamma\]
for every $\ell=3,5,\ldots,k$, and
\[t^{C_\ell}_U(x)=\frac{\gamma_\ell}{\gamma} g_\ell(x)+\sum_{i\in [m]}\tau_{i,\ell}f_i(x)\]
for every odd integer $\ell$, $3\le\ell\le k$, and $t(C_{k+1},U)\le\delta$.
\end{lemma}

\begin{figure}
\begin{center}
\epsfbox{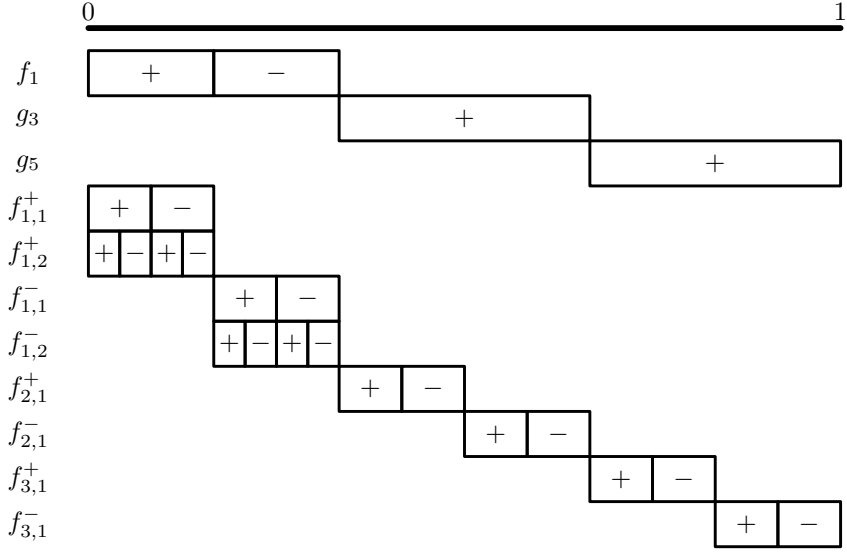}
\end{center}
\caption{The signs of the functions from the proof of Lemma~\ref{lm:core} when $k=5$ and $m=1$.}
\label{fig:core}
\end{figure}

\begin{proof}
We can assume that at least one of the values $\sigma_i$, $\gamma_{\ell}$ and $\tau_{i,\ell}$ is non-zero;
if this is not the case, we will prove the lemma for $k+2$ and
the additional values set as $\gamma_{k+2}=\delta/2$ and $\tau_{i,k+2}=0$, $i\in [m]$.
Let $\alpha_i=\frac{i}{m+(k-1)/2}$ for $i=0,\ldots,m+(k-1)/2$;
note that the intervals $[\alpha_{i-1},\alpha_i)$, $i\in [m+(k-1)/2]$, partition the interval $[0,1)$.
For $\ell=3,5,\ldots,k$, we define
\[g_{\ell}(x)=\begin{cases}
              (m+(k-1)/2)^{1/2} & \mbox{if $\alpha_{m+(\ell-3)/2}\le x<\alpha_{m+(\ell-1)/2}$,} \\
	      0 & \mbox{otherwise.}
	      \end{cases}\]
For $i=1,\ldots,m$, we define
\[f_i(x)=\begin{cases}
         (m+(k-1)/2)^{1/2} & \mbox{if $\alpha_{i-1}\le x<\frac{\alpha_{i-1}+\alpha_i}{2}$,} \\
	 -(m+(k-1)/2)^{1/2} & \mbox{if $\frac{\alpha_{i-1}+\alpha_i}{2}\le x<\alpha_i$,} \\
	 0 & \mbox{otherwise.}
	 \end{cases}\]
Note that the functions $g_3,\ldots,g_k$ and $f_1,\ldots,f_m$ form an orthonormal system of functions such that
\[\int_{[0,1]}g_{\ell}(x)\dd x=\gamma\mbox{ and }\int_{[0,1]}f_i(x)\dd x=0\]
for every $\ell=3,5,\ldots,k$ and for every $i\in [m]$, where $\gamma=(m+(k-1)/2)^{-1/2}$.

We next construct the kernel $U$.
We start with defining functions $h_j:[0,1]\to\RR$, $j\in\NN$, as
\[h_j(x)=\begin{cases}
         +(2m+k-1)^{1/2} & \mbox{if $\lfloor 2^jx\rfloor$ is even, and}\\
	 -(2m+k-1)^{1/2} & \mbox{otherwise.}
	 \end{cases}\]
For $i=1,\ldots,m+(k-1)/2$ and $j\in\NN$,
we define a function $f_{i,j}^+:[0,1]\to\RR$ as
\[f_{i,j}^+(x)=\begin{cases}
               h_j\left((2m+k-1)(x-\alpha_{i-1})\right) & \mbox{if $\alpha_{i-1}\le x<\frac{\alpha_{i-1}+\alpha_i}{2}$, and}\\
	       0 & \mbox{otherwise,}
	       \end{cases}\]
and a function $f_{i,j}^-:[0,1]\to\RR$ as
\[f_{i,j}^-(x)=\begin{cases}
               h_j\left((2m+k-1)\left(x-\frac{\alpha_{i-1}+\alpha_i}{2}\right)\right) & \mbox{if $\frac{\alpha_{i-1}+\alpha_i}{2}\le x<\alpha_i$, and}\\
	       0 & \mbox{otherwise.}
	       \end{cases}\]
See Figure~\ref{fig:core} for an illustration.

For $i=1,\ldots,m$,
we apply Lemma~\ref{lm:powers} with $k$, $s_j=\frac{\tau_{i,j}}{2(m+(k-1)/2)^{1/2}}-\frac{\sigma_i^j}{2}$ for $j=3,\ldots,k$, and $\frac{\delta}{2(2m+k-1)}$,
to get $\omega^+_{i,1},\ldots,\omega^+_{i,m^+_i}$ such that
the sum of their $j$-th powers is equal to $s_j=\frac{\tau_{i,j}}{2(m+(k-1)/2)^{1/2}}-\frac{\sigma_i^j}{2}$ and
the sum of their $(k+1)$-th powers is at most $\frac{\delta}{2(2m+k-1)}$.
We next apply Lemma~\ref{lm:powers} with $k$, $s_j=\frac{-\tau_{i,j}}{2(m+(k-1)/2)^{1/2}}-\frac{\sigma_i^j}{2}$ for $j=3,\ldots,k$, and $\frac{\delta}{2(2m+k-1)}$,
to get $\omega^-_{i,1},\ldots,\omega^-_{i,m^-_i}$ such that
the sum of their $j$-th powers is equal to $s_j=\frac{-\tau_{i,j}}{2(m+(k-1)/2)^{1/2}}-\frac{\sigma_i^j}{2}$ and
the sum of their $(k+1)$-th powers is at most $\frac{\delta}{2(2m+k-1)}$.
For $i=1,\ldots,(k-1)/2$,
we apply Lemma~\ref{lm:powers} with $k$, $s_{2i+1}=\frac{\gamma_{2i+1}}{2\gamma(m+(k-1)/2)^{1/2}}$ and $s_j=0$ for $j\not=2i+1$, and $\frac{\delta}{2(2m+k-1)}$,
to get $\omega_{m+i,1}\ldots,\omega_{m+i,m_{m+i}}$ such that
the sum of their $j$-th powers is equal to $0$ unless $j=2i+1$ and it is equal to $\frac{\gamma_{2i+1}}{2\gamma(m+(k-1)/2)^{1/2}}$ if $j=2i+1$, and
the sum of their $(k+1)$-th powers is at most $\frac{\delta}{2(2m+k-1)}$.
We define the kernel $U$ as
\begin{align*}
U(x,y) &=\sum_{i\in [m]}\sigma_{i}f_i(x)f_i(y)+\\
       &+\sum_{i\in [m]}\sum_{j\in [m_i^+]}\omega_{i,j}^+f_{i,j}^+(x)f_{i,j}^+(y)+\sum_{i\in [m]}\sum_{j\in [m_i^-]}\omega_{i,j}^-f_{i,j}^-(x)f_{i,j}^-(y)\\
       &+\sum_{i\in [(k-1)/2]}\sum_{j\in [m_{m+i}]}\omega_{m+i,j}(f_{m+i,j}^+(x)f_{m+i,j}^+(y)-f_{m+i,j}^-(x)f_{m+i,j}^-(y)).
\end{align*}       
Since the integral of each of the functions $f_i$ for $i\in [m]$,
$f_{i,j}^+$ for $i\in [m]$ and $j\in [m_i^+]$, 
$f_{i,j}^-$ for $i\in [m]$ and $j\in [m_i^-]$, and
$f_{m+i,j}^+$ and $f_{m+i,j}^-$ for $i\in [(k-1)/2]$ and $j\in [m_{m+i}]$ over $[0,1]$ is zero,
it follows that the kernel $U$ is balanced.
Next observe that
\begin{align*}
t(C_{k+1},U) &= \sum_{i\in [m]}\sigma_{i}^{k+1}+\sum_{i\in [m]}\sum_{j\in [m_i]}\left(\omega_{i,j}^+\right)^{k+1}+\sum_{i\in [m]}\sum_{j\in [m_i^-]}\left(\omega_{i,j}^-\right)^{k+1}\\
             &+2\sum_{i\in [(k-1)/2]}\sum_{j\in [m_{m+i}]}\omega_{m+i,j}^{k+1}\\
             &\le \frac{\delta}{2}+m\cdot\frac{\delta}{2m+k-1}+\frac{k-1}{2}\cdot\frac{\delta}{2m+k-1}=\delta.
\end{align*}
For $\ell=3,\ldots,k$, we obtain that
\begin{align*}
t^{C_\ell}_U(x) & = \sum_{i\in [m]}\sigma_{i}^\ell f_i(x)^2+
                    \sum_{i\in [m]}\sum_{j\in [m_i]}\left(\omega_{i,j}^+\right)^\ell f_{i,j}^+(x)^2+\sum_{i\in [m]}\sum_{j\in [m_i^-]}\left(\omega_{i,j}^-\right)^\ell f_{i,j}^-(x)^2\\
		& +\sum_{i\in [(k-1)/2]}\sum_{j\in [m_{m+i}]}\omega_{m+i,j}^\ell \left(f_{m+i,j}^+(x)^2+f_{m+i,j}^-(x)^2\right)\\
		& = \sum_{i\in [m]}\tau_{i,\ell}f_i(x)+\frac{\gamma_{\ell}}{\gamma}g_{\ell}(x).
\end{align*}
This concludes the proof of the lemma.
\end{proof}

The next lemma summarizes key properties of kernels obtained by applying Lemma~\ref{lm:core}.

\begin{lemma}
\label{lm:core-apply}
Let $U$ be the kernel obtained by applying Lemma~\ref{lm:core} with $k$, $\delta$, $m$,
$\gamma$, $\sigma_i$, $\gamma_\ell$ and $\tau_{i,\ell}$, with $i\in [m]$ and $\ell=3,5,\ldots,k$.
It holds that 
\[t(C_{\ell},U)=\gamma_{\ell}\qquad\mbox{and}\qquad t(C_\ell\oplus C_{\ell},U)=\frac{\gamma_{\ell}^2}{\gamma^2}+\sum_{i\in [m]}\tau_{i,\ell}^2\]
for every $\ell=3,5,\ldots,k$.
Moreover, if $\ell$ and $\ell'$ are odd integers between $3$ and $k$ and $n$ is a non-negative integer such that
$\ell\not=\ell'$ or $n>0$, then it holds that
\[t(C_\ell\oplus P_n\oplus C_{\ell'},U)=\sum_{i\in [m]}\sigma_i^n\tau_{i,\ell}\tau_{i,\ell'};\]
we interpret $0^0$ in the sum above as $1$.
\end{lemma}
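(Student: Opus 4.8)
The plan is to compute each of the three homomorphism densities directly from the explicit formula for $t^{C_\ell}_U(x)$ established in Lemma~\ref{lm:core}, namely $t^{C_\ell}_U(x)=\frac{\gamma_\ell}{\gamma}g_\ell(x)+\sum_{i\in[m]}\tau_{i,\ell}f_i(x)$, together with the orthonormality of the system $\{f_1,\dots,f_m,g_3,\dots,g_k\}$ and the identities from Subsection~\ref{subsec:prelim2}. First I would handle $t(C_\ell,U)$: by the identity $t(H,U)=\int_{[0,1]}t^H_U(x)\dd x$ we get $t(C_\ell,U)=\frac{\gamma_\ell}{\gamma}\int g_\ell+\sum_{i\in[m]}\tau_{i,\ell}\int f_i=\frac{\gamma_\ell}{\gamma}\cdot\gamma+0=\gamma_\ell$, using that $\int g_\ell=\gamma$ and $\int f_i=0$ from the conclusion of Lemma~\ref{lm:core}.

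Next I would treat $t(C_\ell\oplus C_\ell,U)$. Since $C_\ell$ is vertex transitive, identity \eqref{eq:glue} gives $t(C_\ell\oplus C_\ell,U)=\int_{[0,1]}\left(t^{C_\ell}_U(x)\right)^2\dd x$, and expanding the square and using $\|g_\ell\|_2=1$, $\|f_i\|_2=1$ and the mutual orthogonality of $g_\ell$ with each $f_i$ yields exactly $\frac{\gamma_\ell^2}{\gamma^2}+\sum_{i\in[m]}\tau_{i,\ell}^2$. For the mixed term I would use that $t^{C_\ell\oplus P_n\oplus C_{\ell'}}_U=U^n t^{C_\ell}_U$ when viewed as a rooted graph (from the observation $t^{H\oplus P_n}_U=U^n t^H_U$), and then apply \eqref{eq:glue} to write $t(C_\ell\oplus P_n\oplus C_{\ell'},U)=\int_{[0,1]}\left(U^n t^{C_\ell}_U\right)(x)\cdot t^{C_{\ell'}}_U(x)\dd x$. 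Here the key algebraic facts from Lemma~\ref{lm:core} are that $g_\ell$ lies in the kernel of $U$ (so $U^n g_\ell=0$ for $n\ge1$) while $f_i$ is an eigenfunction with eigenvalue $\sigma_i$ (so $U^n f_i=\sigma_i^n f_i$). Thus $U^n t^{C_\ell}_U=\sum_{i\in[m]}\sigma_i^n\tau_{i,\ell}f_i$ whenever $n\ge1$; when $n=0$ it equals $t^{C_\ell}_U$ itself, and the hypothesis $\ell\ne\ell'$ guarantees the $g$-terms are orthogonal ($\langle g_\ell,g_{\ell'}\rangle=0$). In either case, taking the inner product against $t^{C_{\ell'}}_U=\frac{\gamma_{\ell'}}{\gamma}g_{\ell'}+\sum_{i\in[m]}\tau_{i,\ell'}f_i$ and using orthonormality kills all cross terms and leaves $\sum_{i\in[m]}\sigma_i^n\tau_{i,\ell}\tau_{i,\ell'}$, with the convention $0^0=1$ handling the $n=0$, $\sigma_i=0$ case correctly.

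The only genuinely delicate point is bookkeeping: one must be careful that for $n=0$ the formula $U^n t^{C_\ell}_U = \sum_i \sigma_i^0\tau_{i,\ell}f_i$ is \emph{not} literally true (the $g_\ell$ term survives), so the case split $n=0$ versus $n\ge1$ must be made explicitly, and in the $n=0$ subcase the hypothesis $\ell\ne\ell'$ is exactly what is needed to discard $\langle g_\ell,g_{\ell'}\rangle$. Beyond that, everything reduces to expanding inner products of finite linear combinations of an orthonormal system, so there is no real obstacle — the work is entirely routine once the correct identities ($t(H,U)=\int t^H_U$, \eqref{eq:glue}, and $t^{H\oplus P_n}_U=U^n t^H_U$) are invoked and the eigenfunction/kernel structure of $U$ from Lemma~\ref{lm:core} is used.
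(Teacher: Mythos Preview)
Your proof is correct. The paper does not actually give an explicit proof of this lemma (it is stated as a direct summary of the properties established in Lemma~\ref{lm:core}), and your argument is precisely the intended verification: integrate the formula $t^{C_\ell}_U=\frac{\gamma_\ell}{\gamma}g_\ell+\sum_i\tau_{i,\ell}f_i$, apply \eqref{eq:glue} and $t^{H\oplus P_n}_U=U^n t^H_U$, and use the orthonormality of the $f_i,g_\ell$ together with $Ug_\ell=0$ and $Uf_i=\sigma_i f_i$; your explicit case split on $n=0$ versus $n\ge 1$ (invoking $\ell\ne\ell'$ in the former case to kill $\langle g_\ell,g_{\ell'}\rangle$) is exactly the right bookkeeping.
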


An immediate corollary of Lemmas~\ref{lm:core} and~\ref{lm:core-apply} is the following.

\begin{lemma}
\label{lm:core-cycle}
For every odd integer $k\ge 3$, there exists a non-zero balanced kernel $U$ such that
$t^{C_\ell}_U(x)=0$ for every odd integer $\ell$, $3\le\ell\le k$, and every $x\in [0,1]$.
In particular, $t(C_\ell,U)=0$ for every odd integer $\ell$, $3\le\ell\le k$.
\end{lemma}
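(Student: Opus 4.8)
The plan is to derive the lemma directly from Lemma~\ref{lm:core} (equivalently, from Lemma~\ref{lm:core-apply}) by feeding it the degenerate choice of parameters in which all the ``cycle data'' vanishes. Concretely, I would fix any $\delta\in(0,1)$ and apply Lemma~\ref{lm:core} with the given odd integer $k$, with $m=0$ (so that there are no reals $\sigma_i$ or $\tau_{i,\ell}$ to prescribe), and with $\gamma_\ell=0$ for every odd integer $\ell$, $3\le\ell\le k$. The hypothesis $\sigma_1^{k+1}+\cdots+\sigma_m^{k+1}\le\delta/2$ is then vacuously satisfied, so the lemma applies.

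Lemma~\ref{lm:core} then produces a non-zero balanced kernel $U$ together with a real $\gamma>0$ and functions $g_\ell\in L_2[0,1]$ such that
\[t^{C_\ell}_U(x)=\frac{\gamma_\ell}{\gamma}g_\ell(x)+\sum_{i\in [m]}\tau_{i,\ell}f_i(x)\]
for every odd $\ell$ with $3\le\ell\le k$ and every $x\in[0,1]$. Since $\gamma_\ell=0$ for all such $\ell$ and the sum ranges over the empty set $[m]=\emptyset$, the right-hand side is identically zero, which is exactly the claimed conclusion $t^{C_\ell}_U(x)=0$. The ``in particular'' assertion follows by integrating, $t(C_\ell,U)=\int_{[0,1]}t^{C_\ell}_U(x)\dd x=0$, or alternatively from the identity $t(C_\ell,U)=\gamma_\ell=0$ furnished by Lemma~\ref{lm:core-apply}.

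The only point requiring a word of care — and hence the closest thing to an ``obstacle'' — is that $U$ must be non-zero even though every parameter supplied to Lemma~\ref{lm:core} has been set to zero. This is precisely the case handled at the outset of the proof of Lemma~\ref{lm:core}: when all of $\sigma_i$, $\gamma_\ell$, $\tau_{i,\ell}$ vanish, one instead runs the construction with $k+2$ in place of $k$, adding the data $\gamma_{k+2}=\delta/2\neq0$ and $\tau_{i,k+2}=0$. This injects a non-trivial component into $U$ whose associated cycle length is $k+2>k$, and therefore does not affect the vanishing of $t^{C_\ell}_U$ for $\ell\le k$. So the statement is a straightforward specialization of the two preceding lemmas, with no genuine difficulty beyond invoking them with the right parameters.
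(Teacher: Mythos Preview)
Your proposal is correct and matches the paper's approach exactly: the paper states this lemma as an immediate corollary of Lemmas~\ref{lm:core} and~\ref{lm:core-apply}, and your choice $m=0$, $\gamma_\ell=0$ (with any $\delta\in(0,1)$) is precisely the intended specialization. Your observation about the non-zero kernel being handled by the $k+2$ adjustment at the start of the proof of Lemma~\ref{lm:core} is also spot on.
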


A less straightforward corollary of Lemma~\ref{lm:core} is the following.

\begin{lemma}
\label{lm:core-III}
Let $\DD$ be a deck and $k$ an even integer such that no graph in $\DD$ contains an even cycle of length at most $k$.
There exists a non-zero kernel $U$ such that $c^U_{\DD,2}=\cdots=c^U_{\DD,k}=0$.
\end{lemma}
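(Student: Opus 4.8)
The plan is to apply Lemma~\ref{lm:core} to a carefully chosen input so that the resulting balanced kernel $U$ kills every coefficient $c^U_{\DD,2},\ldots,c^U_{\DD,k}$. Since $U$ will be balanced, Proposition~\ref{prop:coeff2} already gives $c^U_{\DD,2}=0$; more importantly, by Proposition~\ref{prop:degreeone} every graph $H$ with a vertex of degree one contributes $t(H,U)=0$ to the sum \eqref{eq:coeff}, so for each even $\ell'$ with $4\le\ell'\le k$ only the $\ell'$-edge subgraphs $H$ of graphs in $\DD$ with minimum degree two matter. Such $H$ has $\|H\|=\ell'\le k$ edges; since no graph in $\DD$ contains an even cycle of length at most $k$, the graph $H$ likewise contains no even cycle, hence no cycle of length $\le k$ at all except odd ones, and being of minimum degree two with at most $k$ edges it must be a disjoint union of odd cycles (indeed a single odd cycle, since two disjoint odd cycles already need at least $3+3=6$ edges and that is fine, but more than one component is possible) — in any case a disjoint union of odd cycles whose total length is at most $k$. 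So the obstruction reduces to controlling $t(H,U)$ for $H$ a vertex-disjoint union of odd cycles $C_{\ell_1}\cup\cdots\cup C_{\ell_r}$ with $\ell_1+\cdots+\ell_r\le k$ and each $\ell_j\ge 3$ odd.

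Now I would invoke Lemma~\ref{lm:core-cycle}: it produces, for the given odd integer $k' \ge k$ (take $k'=k-1$ if $k-1\ge 3$, which holds once $k\ge 4$; the trivial cases $k\le 3$ need no even cycle restriction and one can just take any small balanced kernel), a non-zero balanced kernel $U$ with $t^{C_\ell}_U(x)=0$ for every odd $\ell$ with $3\le\ell\le k'$ and every $x\in[0,1]$. For a disjoint union $H=C_{\ell_1}\cup\cdots\cup C_{\ell_r}$ of odd cycles the density factorizes as $t(H,U)=\prod_{j=1}^r t(C_{\ell_j},U)$, and each factor is $\int_{[0,1]} t^{C_{\ell_j}}_U(x)\,dx=0$ because the integrand vanishes identically. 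Hence $t(H,U)=0$ for every min-degree-two $\ell'$-edge subgraph $H$ ($\ell'\le k$) of a graph in $\DD$, and combined with Proposition~\ref{prop:degreeone} this forces $c^U_{\DD,\ell'}=\sum_{H,\|H\|=\ell'} s_\DD(H)\,t(H,U)=0$ for every even $\ell'$ with $2\le\ell'\le k$. Since the kernel from Lemma~\ref{lm:core-cycle} is explicitly non-zero, we are done.

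The main thing to get right is the reduction step: verifying that every min-degree-two graph with at most $k$ edges and no even cycle is a vertex-disjoint union of odd cycles. A min-degree-two graph has a cycle; removing a vertex of a cycle leaves min degree at least one, but to argue cleanly I would instead note that any graph with minimum degree two decomposes into blocks each of which is 2-connected or an edge, and a 2-connected graph that contains no even cycle and has bounded size must be an odd cycle — but actually since our graph has no vertex of degree one it has no bridge blocks, so every block is 2-connected; a 2-connected graph other than a single edge or a single cycle contains two vertices joined by three internally disjoint paths (by a standard ear/Menger argument), and among three paths between the same pair of endpoints two have lengths of equal parity, yielding an even cycle — contradiction. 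So every block is a cycle, and with no even cycle present, an odd cycle; since blocks sharing a cut vertex would create a vertex of degree $\ge 3$ only if... actually two odd cycles sharing a cut vertex (a $C_3\oplus C_3$) is a legitimate min-degree-two graph, but it contains no even cycle, so it is allowed — and $t$ of it still factorizes through rooted densities via \eqref{eq:glue}, which vanish since $t^{C_\ell}_U\equiv 0$. So more carefully: any min-degree-two graph with no even cycle is built from odd cycles glued at cut vertices in a tree-like fashion, and iterated use of \eqref{eq:glue} together with $t^{C_\ell}_U(x)=0$ for all $x$ shows $t(H,U)=0$ for all such $H$. This blockwise analysis, rather than the factorization for disjoint unions alone, is the one subtle point; everything else is a direct substitution into the definitions.

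\begin{proof}
We may assume $k\ge 4$, as otherwise the hypothesis on even cycles is vacuous for $k=2$ (and for $k=0$ there is nothing to prove), and we may take $U$ to be any non-zero balanced kernel given by Lemma~\ref{lm:core-cycle} (with $k=3$). Since $k$ is even, $k-1\ge 3$ is odd; apply Lemma~\ref{lm:core-cycle} with $k-1$ to obtain a non-zero balanced kernel $U$ with $t^{C_\ell}_U(x)=0$ for every odd integer $\ell$ with $3\le\ell\le k-1$ and every $x\in[0,1]$.

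Fix an even integer $\ell'$ with $2\le\ell'\le k$. By \eqref{eq:coeff},
\[c^U_{\DD,\ell'}=\sum_{H,\,\|H\|=\ell'}s_{\DD}(H)\,t(H,U),\]
so it suffices to show $t(H,U)=0$ for every graph $H$ with $\|H\|=\ell'$ that occurs as a subgraph of some graph in $\DD$. If $H$ has a vertex of degree one, then $t(H,U)=0$ by Proposition~\ref{prop:degreeone}, since $U$ is balanced. So assume $H$ has minimum degree at least two; removing isolated vertices (which does not change $t(H,U)$) we may assume $H$ has minimum degree exactly two on all of its vertices. Then every block of $H$ is either an edge or a $2$-connected graph; but a block that is an edge would have an endpoint of degree one in $H$ unless that endpoint lies in another block, which is impossible for an edge-block in a graph of minimum degree two where the edge-block is a bridge with a leaf — more precisely, a bridge block forces one of its endpoints to have degree one in $H$, a contradiction. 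Hence every block of $H$ is $2$-connected. A $2$-connected graph that is not a single cycle contains two vertices joined by three internally disjoint paths, and two of these three paths have the same parity of length, producing an even cycle in $H$ of length at most $\|H\|=\ell'\le k$; this contradicts the assumption that no graph in $\DD$ contains an even cycle of length at most $k$. Therefore every block of $H$ is a cycle, and since $H$ contains no even cycle of length at most $k\ge\ell'=\|H\|$, every block of $H$ is an odd cycle (of length between $3$ and $k-1$, as an odd number at most $\ell'\le k$).

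It remains to show $t(H,U)=0$ when every block of $H$ is an odd cycle of length at most $k-1$. If $H$ is disconnected, say $H=H_1\cup H_2$, then $t(H,U)=t(H_1,U)\,t(H_2,U)$, so by induction on the number of components it suffices to treat connected $H$. If $H$ is a single odd cycle $C_\ell$ with $3\le\ell\le k-1$, then
\[t(C_\ell,U)=\int_{[0,1]}t^{C_\ell}_U(x)\,\dd x=0\]
since $t^{C_\ell}_U\equiv 0$. Otherwise $H$ has a cut vertex $w$; write $H=H_1\oplus H_2$ where the two rooted graphs $H_1,H_2$ share only the root $w$ and each is a union of fewer odd-cycle blocks than $H$ (so in particular each is a proper rooted subgraph of $H$). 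We claim $t^{H}_U(x)=0$ for almost every $x\in[0,1]$, and we prove this by induction on the number of blocks. If $H=H_1\oplus H_2$ as above with $H_1$ a single odd cycle $C_\ell$ rooted at $w$, then by definition $t^{H}_U(x)=t^{C_\ell}_U(x)\cdot t^{H_2}_U(x)$, and $t^{C_\ell}_U(x)=0$ for all $x$; in general, decomposing $H$ at $w$ into blocks, $t^{H}_U(x)$ is a product over the blocks containing $w$ of rooted densities, at least one of which is a rooted odd cycle density $t^{C_\ell}_U$ composed with paths, hence $(U^n t^{C_\ell}_U)(x)$ for some $n\ge 0$, and since $t^{C_\ell}_U\equiv 0$ this is identically zero. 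Therefore $t^{H}_U(x)=0$ for every $x$, and
\[t(H,U)=\int_{[0,1]}t^{H}_U(x)\,\dd x=0.\]
Thus every term in the displayed sum for $c^U_{\DD,\ell'}$ vanishes, giving $c^U_{\DD,\ell'}=0$ for every even $\ell'$ with $2\le\ell'\le k$, while $U\ne 0$. This completes the proof.
\end{proof}
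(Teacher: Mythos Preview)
Your approach is the same as the paper's: apply Lemma~\ref{lm:core-cycle} (equivalently, Lemma~\ref{lm:core} with all parameters zero) to obtain a non-zero balanced kernel $U$ with $t^{C_\ell}_U\equiv 0$ for all odd $\ell\le k-1$, and then argue that every relevant subgraph $H$ with minimum degree two has $t(H,U)=0$.

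However, your structural analysis contains a genuine error. You assert that in a graph of minimum degree two, every block is $2$-connected, i.e., that there are no bridge blocks. This is false: the principal $8$-edge graph $C_3\oplus P_2\oplus C_3$ (see Figure~\ref{fig:princ8}) has minimum degree two, no even cycle, and two bridge blocks (the two edges of the path). Your sentence ``a bridge block forces one of its endpoints to have degree one in $H$'' is simply incorrect---both endpoints of a bridge can be cut vertices lying in other blocks. Consequently, your subsequent restriction ``it remains to show $t(H,U)=0$ when every block of $H$ is an odd cycle'' does not cover all principal graphs, and your proof as written does not establish $c^U_{\DD,\ell'}=0$ for decks containing, say, $C_3\oplus P_2\oplus C_3$.

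The fix is easy and is what the paper does. The correct statement is that every block of $H$ is an odd cycle \emph{or an edge} (this is exactly the definition of a principal graph). What remains true is that every \emph{end-block} of $H$ is an odd cycle: an end-block that is a single edge would have a leaf, contradicting minimum degree two. Choosing the cut vertex $w$ of such an end-block $C_\ell$ and decomposing via \eqref{eq:glue} gives $t(H,U)=\int_{[0,1]} t^{C_\ell}_U(x)\,t^{H'}_U(x)\,\dd x=0$, since $t^{C_\ell}_U\equiv 0$. This one-line observation replaces your flawed block analysis and the somewhat hand-wavy induction in your final paragraph.
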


\begin{proof}
Apply Lemma~\ref{lm:core} with $k-1$, $\delta=1$, $m=0$ and $\gamma_3=\cdots=\gamma_{k-1}=0$
to get a non-zero balanced kernel $U$ with the properties given in the statements of Lemmas~\ref{lm:core} and~\ref{lm:core-apply}.
Since $U$ is balanced, it follows that $c^U_{\DD,2}=0$.
Since $t^{C_{\ell}}_U(x)=0$ for every $\ell=3,\ldots,k-1$ and almost every $x\in [0,1]$ and
every principal graph $H$ with at most $k$ edges contains an end-block that is an odd cycle,
it holds that $t(H,U)=0$ for every principal graph $H$.
Hence, it also holds that $c^U_{\DD,4}=\cdots=c^U_{\DD,k}=0$.
\end{proof}

\section{Decks with at most eight edges}
\label{sec:deck8}

In this section we prove Theorem~\ref{thm:deck8}, 
which determines which class a deck of size up to 8 belongs to;
the characterization is visualized in Figure~\ref{fig:deck8} and Table~\ref{tab:deck8}.
We start with Lemmas~\ref{lm:deck4}~and~\ref{lm:deck6},
which deal with decks of size 4 and 6, respectively.

\begin{figure}
\begin{center}
\epsfbox{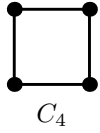}
\end{center}
\caption{The only principal $4$-edge graph.}
\label{fig:princ4}
\end{figure}

\begin{lemma}
\label{lm:deck4}
A $4$-deck $\DD$ with $s_{\DD}(P_2)>0$ is of Class I if and only if $s_{\DD}(C_4)>0$;
otherwise, $\DD$ is of Class III.
\end{lemma}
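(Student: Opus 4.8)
The plan is to reduce the classification of a $4$-deck $\DD$ with $s_{\DD}(P_2)>0$ to an analysis of the coefficients $c^{\DD}_{U,2}$ and $c^{\DD}_{U,4}$ for an arbitrary non-zero kernel $U$. By Proposition~\ref{prop:coeff2}, since $s_{\DD}(P_2)>0$, we always have $c^{\DD}_{U,2}\ge 0$, with equality if and only if $U$ is balanced. So if $U$ is not balanced, the first non-zero coefficient is $c^{\DD}_{U,2}>0$, which is consistent with both Class~I and Class~III but rules out Class~II no matter what $\DD$ looks like. Hence everything hinges on what happens for balanced kernels $U$, where we must understand the sign of $c^{\DD}_{U,4}$.

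For a balanced kernel, Proposition~\ref{prop:degreeone} tells us that $t(H,U)=0$ for every $4$-edge graph $H$ with a vertex of degree one, so by \eqref{eq:coeff} the only $4$-edge graphs contributing to $c^{\DD}_{U,4}$ are those with minimum degree two; as noted after the definition of principal graphs, the only such $4$-edge graph is $C_4$ (see Figure~\ref{fig:princ4}). Thus $c^{\DD}_{U,4}=s_{\DD}(C_4)\,t(C_4,U)$ for every balanced $U$. Now I split into the two cases. If $s_{\DD}(C_4)>0$: for every non-zero balanced $U$, Proposition~\ref{prop:cycle} gives $t(C_4,U)>0$, hence $c^{\DD}_{U,4}>0$; combined with the observation that non-balanced $U$ give $c^{\DD}_{U,2}>0$, we conclude that for every non-zero $U$ the first non-zero coefficient among $c^{\DD}_{U,2},c^{\DD}_{U,4}$ is positive and in particular at least one is non-zero, so $\DD$ is of Class~I. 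If $s_{\DD}(C_4)=0$: then $c^{\DD}_{U,4}=0$ for every balanced $U$, so by applying Lemma~\ref{lm:core-III} (with $k=4$), or simply taking any non-zero balanced kernel, we obtain a non-zero $U$ with $c^{\DD}_{U,2}=c^{\DD}_{U,4}=0$; meanwhile, for every non-zero $U$, either $U$ is balanced and both coefficients vanish, or $U$ is not balanced and the first non-zero coefficient is $c^{\DD}_{U,2}>0$. This is exactly the definition of Class~III, and $\DD$ cannot be of Class~I (witnessed by the balanced $U$) nor Class~II.

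The argument is essentially bookkeeping once the right structural facts are in place, so there is no serious obstacle; the only point requiring a little care is the identification that $C_4$ is the unique $4$-edge graph with minimum degree two, which is immediate (a minimum-degree-two graph on $4$ edges has at most $4$ vertices and is $2$-regular, forcing $C_4$). I would also make sure to explicitly invoke Proposition~\ref{prop:cycle} to get strict positivity of $t(C_4,U)$ for non-zero balanced $U$, and Proposition~\ref{prop:coeff2} for the balanced-versus-non-balanced dichotomy at level $2$; these two inputs, together with Proposition~\ref{prop:degreeone}, are what make the two-line proof work.
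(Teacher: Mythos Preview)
Your proof is correct and follows essentially the same approach as the paper's: both use Proposition~\ref{prop:coeff2} to reduce to balanced kernels, then identify $C_4$ as the only $4$-edge graph with minimum degree two so that $c^{\DD}_{U,4}=s_{\DD}(C_4)\,t(C_4,U)$ for balanced $U$, and conclude via Proposition~\ref{prop:cycle} in the $s_{\DD}(C_4)>0$ case and via Lemma~\ref{lm:core-III} in the $s_{\DD}(C_4)=0$ case. Your additional remark that any non-zero balanced kernel already witnesses Class~III when $s_{\DD}(C_4)=0$ is a valid shortcut.
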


\begin{proof}
Fix a $4$-deck $\DD$ with $s_{\DD}(P_2)>0$.
Assume that $s_{\DD}(C_4)>0$ and consider a non-zero kernel $U$.
If $U$ is not balanced, then $c^U_{\DD,2}>0$ by Proposition~\ref{prop:coeff2}.
If $U$ is balanced, then $c^U_{\DD,2}=0$ by Proposition~\ref{prop:coeff2} and
$c^U_{\DD,4}=s_{\DD}(C_4)t(C_4,U)>0$ by Proposition~\ref{prop:cycle}.
It follows that the deck $\DD$ is of Class I.

Assume that $s_{\DD}(C_4)=0$ and consider any non-zero kernel $U$.
It follows that $c^U_{\DD,2}\ge 0$ by Proposition~\ref{prop:coeff2} and the equality holds only if the kernel $U$ is balanced.
If the kernel $U$ is balanced, then $t(H,U)=0$ for any $4$-edge graph that is not principal and
thus $c^U_{\DD,4}=s_{\DD}(C_4)t(C_4,U)=0$.
Lemma~\ref{lm:core-III} implies the existence of a non-zero kernel $U$ such that $c^U_{\DD,2}=c^U_{\DD,4}=0$,
which implies that the deck $\DD$ is of Class III.
\end{proof}

\begin{figure}
\begin{center}
\epsfbox{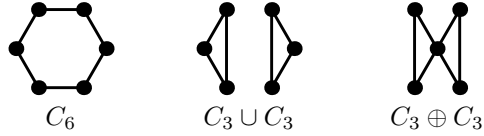}
\end{center}
\caption{Principal $6$-edge graphs.}
\label{fig:princ6}
\end{figure}

\begin{lemma}
\label{lm:deck6}
A $6$-deck $\DD$ with $s_{\DD}(P_2)>0$ is of Class I if and only if $s_{\DD}(C_4)>0$ or $s_{\DD}(C_6)>0$;
otherwise, $\DD$ is of Class III.
\end{lemma}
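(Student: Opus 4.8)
The plan is to follow the template of the proof of Lemma~\ref{lm:deck4}, splitting into three cases according to whether $s_{\DD}(C_4)>0$, or $s_{\DD}(C_4)=0$ and $s_{\DD}(C_6)>0$, or $s_{\DD}(C_4)=s_{\DD}(C_6)=0$, and in each case tracking the sign of the first non-zero coefficient among $c^{\DD}_{U,2}$, $c^{\DD}_{U,4}$, $c^{\DD}_{U,6}$. In every case, Proposition~\ref{prop:coeff2} reduces matters to balanced kernels: since $s_{\DD}(P_2)>0$, a non-zero kernel $U$ that is not balanced already has $c^{\DD}_{U,2}>0$, while a balanced $U$ has $c^{\DD}_{U,2}=0$. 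For balanced $U$, Proposition~\ref{prop:degreeone} annihilates every contribution to $c^{\DD}_{U,4}$ and $c^{\DD}_{U,6}$ coming from a subgraph with a vertex of degree one, so only minimum-degree-two subgraphs survive; and, as in the discussion following the definition of principal graphs, once $s_{\DD}(C_4)=0$ no graph of $\DD$ contains an even cycle of length at most $6$, which forces the surviving four-edge subgraphs to be copies of $C_4$ and the surviving six-edge subgraphs to be copies of the three principal $6$-edge graphs $C_6$, $C_3\oplus C_3$ and $C_3\cup C_3$ depicted in Figure~\ref{fig:princ6}.

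The three cases then go as follows. If $s_{\DD}(C_4)>0$, a balanced non-zero $U$ gives $c^{\DD}_{U,4}=s_{\DD}(C_4)\,t(C_4,U)>0$ by Proposition~\ref{prop:cycle}, so the first non-zero coefficient is positive and $\DD$ is of Class~I. If $s_{\DD}(C_4)=0$ and $s_{\DD}(C_6)>0$, then a balanced non-zero $U$ has $c^{\DD}_{U,4}=0$ and
\[c^{\DD}_{U,6}=s_{\DD}(C_6)\,t(C_6,U)+s_{\DD}(C_3\oplus C_3)\,t(C_3\oplus C_3,U)+s_{\DD}(C_3\cup C_3)\,t(C_3,U)^2.\]
Here $t(C_3\oplus C_3,U)\ge 0$ by \eqref{eq:glue} (as $C_3$ is vertex transitive), the last summand is a square, and $t(C_6,U)>0$ by Proposition~\ref{prop:cycle}; hence $c^{\DD}_{U,6}>0$ and again $\DD$ is of Class~I. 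Finally, if $s_{\DD}(C_4)=s_{\DD}(C_6)=0$, then no graph of $\DD$ contains an even cycle of length at most $6$, so Lemma~\ref{lm:core-III} with $k=6$ produces a non-zero kernel with $c^{\DD}_{U,2}=c^{\DD}_{U,4}=c^{\DD}_{U,6}=0$; moreover, for an arbitrary non-zero balanced $U$ the displayed identity with its first term deleted shows $c^{\DD}_{U,6}\ge 0$, so the first non-zero coefficient (if any) is positive. Therefore $\DD$ is of Class~III.

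The only steps with any content beyond the $4$-edge case are the non-negativity of $t(C_3\oplus C_3,U)$ and of $t(C_3\cup C_3,U)$ --- both immediate from the factorization identities for $t(\cdot,U)$ --- and the invocation of Lemma~\ref{lm:core-III}. The point where I would be most careful is the claim that, when $s_{\DD}(C_4)=0$, the list $C_6$, $C_3\oplus C_3$, $C_3\cup C_3$ really is a complete list of the minimum-degree-two $6$-edge subgraphs that can occur: one checks that every $2$-connected graph with six edges other than $C_6$ (namely $K_4$, $K_{2,3}$, the theta graph with parts of sizes $1,2,3$, and $C_5$ with a chord) contains a $4$-cycle and is thus excluded, while a graph of minimum degree two with six edges that is not $2$-connected must have exactly two leaf blocks, each necessarily a triangle, leaving only the bowtie $C_3\oplus C_3$ and the disjoint union $C_3\cup C_3$. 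This enumeration is routine but is the one spot where an omission would break the argument.
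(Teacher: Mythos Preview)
Your proof is correct and follows essentially the same approach as the paper's own proof: reduce to balanced kernels via Proposition~\ref{prop:coeff2}, use Proposition~\ref{prop:degreeone} to restrict attention to minimum-degree-two subgraphs, handle $C_6$, $C_3\oplus C_3$, $C_3\cup C_3$ via Proposition~\ref{prop:cycle} and the non-negativity identities, and invoke Lemma~\ref{lm:core-III} for the Class~III case. One small wording slip: the clause ``once $s_{\DD}(C_4)=0$ no graph of $\DD$ contains an even cycle of length at most $6$'' is false as stated when $s_{\DD}(C_6)>0$ (it should read ``at most $4$'', or be asserted only in the third case where both $s_{\DD}(C_4)$ and $s_{\DD}(C_6)$ vanish), but this does not affect any of your actual case arguments.
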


\begin{proof}
Fix a $6$-deck $\DD$ with $s_{\DD}(P_2)>0$.
If the $4$-deck of $\DD$ is of Class I, then the $6$-deck $\DD$ is also of Class I.
Hence, we can assume that $s_{\DD}(C_4)=0$ by Lemma~\ref{lm:deck4}.

We first consider the case that $s_{\DD}(C_6)>0$.
Consider a non-zero kernel $U$.
If $U$ is not balanced, then $c^U_{\DD,2}>0$ by Proposition~\ref{prop:coeff2}.
If $U$ is balanced, then $c^U_{\DD,2}=0$ by Proposition~\ref{prop:coeff2} and $t(H,U)=0$
for any non-principal $4$-edge or $6$-edge graph $H$;
the three principal $6$-edge graphs are listed in Figure~\ref{fig:princ6}.
Hence, it holds that $c^U_{\DD,4}=s_{\DD}(C_4)t(C_4,U)=0$ and
\[c^U_{\DD,6}=s_{\DD}(C_6)t(C_6,U)+s_{\DD}(C_3\oplus C_3)t(C_3\oplus C_3,U)+s_{\DD}(C_3\cup C_3)t(C_3\cup C_3,U).\]
Since $t(C_3\cup C_3,U)=t(C_3,U)^2\ge 0$ and
\[t(C_3\oplus C_3,U)=\int_{[0,1]}t^{C_3}_U(x)^2\dd x\ge 0,\]
it follows that $c^U_{\DD,6}\ge s_{\DD}(C_6)t(C_6,U)$,
which is positive by Proposition~\ref{prop:cycle}.
We conclude that the deck $\DD$ is of Class I.

It remains to consider the case that $s_{\DD}(C_6)=0$.
We first show that there is no kernel $U$ such that
the first non-zero coefficient among $c^U_{\DD,2}$, $c^U_{\DD,4}$ and $c^U_{\DD,6}$ (if such a coefficient exists) is negative.
Let $U$ be a kernel.
If $U$ is not balanced, then $c^U_{\DD,2}>0$ by Proposition~\ref{prop:coeff2}, and
otherwise, $c^U_{\DD,2}=0$ and $t(H,U)=0$ for any non-principal $4$-edge or $6$-edge graph $H$.
Hence, it holds that $c^U_{\DD,4}=s_{\DD}(C_4)t(C_4,U)=0$ and
\[c^U_{\DD,6}=s_{\DD}(C_3\oplus C_3)t(C_3\oplus C_3,U)+s_{\DD}(C_3\cup C_3)t(C_3\cup C_3,U)\ge 0.\]
The existence of a non-zero kernel $U$ such that $c^U_{\DD,2}=c^U_{\DD,4}=c^U_{\DD,6}=0$
follows from Lemma~\ref{lm:core-III} applied with $k=6$.
We conclude that the deck $\DD$ is of Class III.
\end{proof}

\begin{figure}
\begin{center}
\epsfbox{dcommon-6.mps}
\end{center}
\caption{Principal $8$-edge graphs.}
\label{fig:princ8}
\end{figure}

Lemmas \ref{lm:deck8-III}, \ref{lm:deck8-I}, and \ref{lm:deck8-II} describe when a given $8$-deck is of 
Class III, Class I, or Class II, respectively.

\begin{lemma}
\label{lm:deck8-III}
An $8$-deck $\DD$ with $s_{\DD}(P_2)>0$ is of Class III if the $6$-deck of $\DD$ is of Class III,
$s_{\DD}(C_8)=0$ and at least one of the following holds:
\begin{itemize}
\item $s_{\DD}(C_3\oplus C_3)>0$,
\item $s_{\DD}(C_3\oplus C_5)=0$, $s_{\DD}(C_3\cup C_3)>0$, or
\item $s_{\DD}(C_3\oplus C_5)=s_{\DD}(C_3\cup C_5)=0$.
\end{itemize}
\end{lemma}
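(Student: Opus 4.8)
The definition of Class~III has two parts that I would verify in turn: first, that there is a non-zero kernel $U$ with $c^{\DD}_{U,2}=c^{\DD}_{U,4}=c^{\DD}_{U,6}=c^{\DD}_{U,8}=0$; second, that for every non-zero kernel $U$, either all of $c^{\DD}_{U,2},\dots,c^{\DD}_{U,8}$ vanish or the first non-zero one among them is positive. The plan is to reduce both parts to balanced kernels and principal subgraphs. By Lemma~\ref{lm:deck6}, the hypothesis that the $6$-deck of $\DD$ is of Class~III is equivalent to $s_{\DD}(C_4)=s_{\DD}(C_6)=0$; together with $s_{\DD}(C_8)=0$ this means $\DD$ contains no even cycle of length at most $8$, so, as recorded in Subsection~\ref{subsec:prelim3}, every subgraph of $\DD$ of minimum degree two with at most $8$ edges is principal. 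Since the only principal graphs with $8$ edges are $C_8$, $C_3\oplus C_5$, $C_3\oplus P_2\oplus C_3$ and $C_3\cup C_5$ (Figure~\ref{fig:princ8}), for a balanced kernel $U$ Proposition~\ref{prop:degreeone} and the three vanishing hypotheses leave $c^{\DD}_{U,4}=0$ and
\[c^{\DD}_{U,6}=s_{\DD}(C_3\oplus C_3)\int_{[0,1]}t^{C_3}_U(x)^2\dd x+s_{\DD}(C_3\cup C_3)\,t(C_3,U)^2,\]
\begin{align*}
c^{\DD}_{U,8}&=s_{\DD}(C_3\oplus C_5)\int_{[0,1]}t^{C_3}_U(x)\,t^{C_5}_U(x)\dd x+s_{\DD}(C_3\oplus P_2\oplus C_3)\int_{[0,1]}\left((Ut^{C_3}_U)(x)\right)^2\dd x\\
&\qquad+s_{\DD}(C_3\cup C_5)\,t(C_3,U)\,t(C_5,U),
\end{align*}
where I used \eqref{eq:glue}, multiplicativity of $t(\cdot,U)$ over disjoint unions, and the identity $t^{C_3\oplus P_1}_U=Ut^{C_3}_U$, so that $t(C_3\oplus P_2\oplus C_3,U)=\int (Ut^{C_3}_U)^2$. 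In particular $c^{\DD}_{U,6}\ge 0$ for balanced $U$, and each summand on the right is a nonnegative quantity.

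For the first part I would apply Lemma~\ref{lm:core-cycle} with $k=3$ to get a non-zero balanced kernel $U_0$ with $t^{C_3}_{U_0}\equiv 0$. Then $c^{\DD}_{U_0,2}=0$ by Proposition~\ref{prop:coeff2}, and since $t^{C_3}_{U_0}\equiv 0$ forces $t(C_3,U_0)=0$, $\int t^{C_3}_{U_0}(x)^2\dd x=0$, $\int t^{C_3}_{U_0}(x)t^{C_5}_{U_0}(x)\dd x=0$ and $U_0t^{C_3}_{U_0}\equiv 0$, the displayed formulas immediately give $c^{\DD}_{U_0,4}=c^{\DD}_{U_0,6}=c^{\DD}_{U_0,8}=0$. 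This argument does not depend on which of the three listed conditions holds, so it settles the first part in all cases.

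For the second part, let $U$ be an arbitrary non-zero kernel. If $U$ is not balanced, then $c^{\DD}_{U,2}>0$ by Proposition~\ref{prop:coeff2}, using $s_{\DD}(P_2)>0$. If $U$ is balanced, then $c^{\DD}_{U,2}=c^{\DD}_{U,4}=0$ and $c^{\DD}_{U,6}\ge 0$; if $c^{\DD}_{U,6}>0$ we are done, so assume $c^{\DD}_{U,6}=0$, which forces both summands of $c^{\DD}_{U,6}$ to vanish. It remains to deduce $c^{\DD}_{U,8}\ge 0$, and this is where the three hypotheses enter. If $s_{\DD}(C_3\oplus C_3)>0$, then $\int t^{C_3}_U(x)^2\dd x=0$, hence $t^{C_3}_U\equiv 0$ almost everywhere, and then (exactly as in the first part) all three terms of $c^{\DD}_{U,8}$ vanish. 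If $s_{\DD}(C_3\oplus C_5)=0$ and $s_{\DD}(C_3\cup C_3)>0$, then $t(C_3,U)^2=0$, so $t(C_3,U)=0$; the $C_3\oplus C_5$ term of $c^{\DD}_{U,8}$ is zero by the hypothesis $s_{\DD}(C_3\oplus C_5)=0$, the $C_3\cup C_5$ term is zero because $t(C_3,U)=0$, and the $C_3\oplus P_2\oplus C_3$ term is nonnegative, so $c^{\DD}_{U,8}\ge 0$. If $s_{\DD}(C_3\oplus C_5)=s_{\DD}(C_3\cup C_5)=0$, those same two terms vanish outright and only the nonnegative $C_3\oplus P_2\oplus C_3$ term remains. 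In every case $c^{\DD}_{U,8}\ge 0$, so the first non-zero coefficient among $c^{\DD}_{U,2},\dots,c^{\DD}_{U,8}$, if one exists, is positive; with the first part this gives that $\DD$ is of Class~III.

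The computations above are routine; the two structural inputs are the list of principal $8$-edge graphs and the fact, recorded in Subsection~\ref{subsec:prelim3}, that in the absence of short even cycles no other minimum-degree-two subgraph of size at most $8$ contributes. The one genuinely delicate point — and the reason the statement splits into exactly these three conditions — is that a positive $s_{\DD}(C_3\oplus C_3)$ upgrades ``$c^{\DD}_{U,6}=0$'' to the pointwise conclusion $t^{C_3}_U\equiv 0$ almost everywhere, which makes \emph{every} term of $c^{\DD}_{U,8}$ vanish, whereas a positive $s_{\DD}(C_3\cup C_3)$ only yields $t(C_3,U)=0$, which controls the $C_3\cup C_5$ term but not $s_{\DD}(C_3\oplus C_5)\int t^{C_3}_U t^{C_5}_U$; hence the second condition must additionally forbid $C_3\oplus C_5$, and the third must forbid both $C_3\oplus C_5$ and $C_3\cup C_5$, since neither of the corresponding terms of $c^{\DD}_{U,8}$ need be nonnegative on its own.
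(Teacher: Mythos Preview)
Your proof is correct and follows essentially the same approach as the paper. The only cosmetic difference is in the existence part: the paper invokes Lemma~\ref{lm:core-III} with $k=8$ to obtain a non-zero kernel with all four coefficients zero in one stroke, whereas you invoke Lemma~\ref{lm:core-cycle} with $k=3$ and then verify the vanishing by hand; since both lemmas are immediate corollaries of Lemma~\ref{lm:core}, this is the same argument packaged differently.
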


\begin{proof}
Fix an $8$-deck $\DD$ with $s_{\DD}(P_2)>0$ such that its $6$-deck is of Class III and
that satisfies the assumption of the lemma,
i.e., $s_{\DD}(C_8)=0$ and (at least) one of the three choices in the statement of the lemma holds.
Note that $s_{\DD}(C_4)=s_{\DD}(C_6)=0$ by Lemmas~\ref{lm:deck4} and~\ref{lm:deck6}.
Lemma~\ref{lm:core-III} applied with $k=8$ yields that there exists a non-zero balanced kernel $U$ such that
$c^U_{\DD,2}=c^U_{\DD,4}=c^U_{\DD,6}=c^U_{\DD,8}=0$.
This implies that the $8$-deck $\DD$ is not of Class I. 

To establish the statement of the lemma,
we fix a kernel $U$ and
show that the first non-zero coefficient among $c^U_{\DD,2}$, $c^U_{\DD,4}$, $c^U_{\DD,6}$ and $c^U_{\DD,8}$ is positive or
does not exist.
If $U$ is not balanced, then $c^U_{\DD,2}>0$ by Proposition~\ref{prop:coeff2}.
So, we will assume that $U$ is balanced,
which implies that $t(H,U)=0$ for any graph $H$ with at most eight edges that is not principal.
In particular, it holds that $c^U_{\DD,2}=c^U_{\DD,4}=0$ and
\[c^U_{\DD,6}=s_{\DD}(C_3\oplus C_3)\int_{[0,1]}t^{C_3}_U(x)^2\dd x+s_{\DD}(C_3\cup C_3)t(C_3,U)^2.\]
Note that the coefficient $c^U_{\DD,6}$ is always non-negative.

If $s_{\DD}(C_3\oplus C_3)>0$,
then $c^U_{\DD,6}>0$ unless $t^{C_3}_U(x)=0$ for almost every $x\in [0,1]$.
However, if $t^{C_3}_U(x)=0$ for almost every $x\in [0,1]$,
we obtain that $t(C_3\cup C_5,U)=t(C_3\oplus C_5,U)=t(C_3\oplus P_2\oplus C_3)=0$,
i.e., the densities of all principal $8$-edge graphs in $U$ are zero with the exception of $C_8$.
It follows that $c^U_{\DD,8}=0$.

We next consider the case when $s_{\DD}(C_3\oplus C_5)=0$ and $s_{\DD}(C_3\cup C_3)>0$.
Observe that $c^U_{\DD,6}>0$ unless $t(C_3,U)=0$,
in which case 
$t(C_3\cup C_5,U)=t(C_3,U)t(C_5,U)=0$.
It follows that
\[c^U_{\DD,8}=s_{\DD}(C_3\oplus P_2\oplus C_3)\int_{[0,1]}t^{C_3\oplus P_1}_U(x)^2\dd x\ge 0.\]
We conclude that if the coefficient $c^U_{\DD,6}$ is zero, then $c^U_{\DD,8}$ is non-negative.

The final case given in the statement is that $s_{\DD}(C_3\oplus C_5)=s_{\DD}(C_3\cup C_5)=0$.
We obtain that
\[c^U_{\DD,8}=s_{\DD}(C_3\oplus P_2\oplus C_3)\int_{[0,1]}t^{C_3\oplus P_1}_U(x)^2\dd x\ge 0\]
without any assumptions on the coefficient $c^U_{\DD,6}$.
In particular, the coefficient $c^U_{\DD,8}$ is always non-negative.
Hence, the first non-zero coefficient, if it exists, is either $c^U_{\DD,6}$ or $c^U_{\DD,8}$ and is positive.
This concludes the proof of the lemma.
\end{proof}

\begin{lemma}
\label{lm:deck8-I}
An $8$-deck $\DD$ with $s_{\DD}(P_2)>0$ is of Class I if either the $6$-deck of $\DD$ is of Class I, or
the $6$-deck of $\DD$ is of Class III, $s_{\DD}(C_8)>0$ and at least one of the following holds:
\begin{itemize}
\item $s_{\DD}(C_3\oplus C_3)>0$,
\item $s_{\DD}(C_3\oplus C_5)=0$, $s_{\DD}(C_3\cup C_3)>0$, or
\item $s_{\DD}(C_3\oplus C_5)=s_{\DD}(C_3\cup C_5)=0$.
\end{itemize}
\end{lemma}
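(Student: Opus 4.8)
The plan is to split along the two cases of the hypothesis. The first case, in which the $6$-deck of $\DD$ is of Class~I, is immediate: as noted in Section~\ref{sec:prelim}, if the $\ell'$-deck of a deck is of Class~I then so is the deck itself, so the $8$-deck $\DD$ is of Class~I. So I would concentrate on the second case, in which the $6$-deck of $\DD$ is of Class~III (hence $s_{\DD}(C_4)=s_{\DD}(C_6)=0$ by Lemmas~\ref{lm:deck4} and~\ref{lm:deck6}), $s_{\DD}(C_8)>0$, and one of the three bulleted conditions holds. Fix an arbitrary non-zero kernel $U$; the goal is to show that the coefficients $c^U_{\DD,2},\dots,c^U_{\DD,8}$ are not all zero and that the first non-zero one among them is positive.

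If $U$ is not balanced, then $c^U_{\DD,2}>0$ by Proposition~\ref{prop:coeff2} (using $s_{\DD}(P_2)>0$), and we are done. Otherwise $U$ is balanced, so $t(H,U)=0$ for every graph $H$ with at most eight edges that is not principal by Proposition~\ref{prop:degreeone}, whence $c^U_{\DD,2}=c^U_{\DD,4}=0$. Reading off the principal $6$- and $8$-edge graphs from Figures~\ref{fig:princ6} and~\ref{fig:princ8} and using \eqref{eq:glue} together with $t^{C_3\oplus P_1}_U=Ut^{C_3}_U$, one obtains
\[c^U_{\DD,6}=s_{\DD}(C_3\oplus C_3)\int_{[0,1]}t^{C_3}_U(x)^2\dd x+s_{\DD}(C_3\cup C_3)\,t(C_3,U)^2\ge 0\]
and
\[c^U_{\DD,8}=s_{\DD}(C_8)\,t(C_8,U)+s_{\DD}(C_3\oplus C_5)\,t(C_3\oplus C_5,U)
+s_{\DD}(C_3\cup C_5)\,t(C_3,U)t(C_5,U)+s_{\DD}(C_3\oplus P_2\oplus C_3)\int_{[0,1]}t^{C_3\oplus P_1}_U(x)^2\dd x.\]
If $c^U_{\DD,6}>0$ the first non-zero coefficient is positive and we are done, so the remaining task is to prove $c^U_{\DD,8}>0$ whenever $c^U_{\DD,6}=0$.

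To do this I would use the relevant bullet to knock out the two possibly-negative terms in $c^U_{\DD,8}$, namely those for $C_3\oplus C_5$ and $C_3\cup C_5$. If $s_{\DD}(C_3\oplus C_3)>0$, then $c^U_{\DD,6}=0$ forces $t^{C_3}_U(x)=0$ for almost every $x$, so $t(C_3,U)=0$, $t(C_3\oplus C_5,U)=\int_{[0,1]}t^{C_3}_U(x)t^{C_5}_U(x)\dd x=0$, and $t^{C_3\oplus P_1}_U=Ut^{C_3}_U=0$, leaving $c^U_{\DD,8}=s_{\DD}(C_8)\,t(C_8,U)$. If instead $s_{\DD}(C_3\oplus C_5)=0$ and $s_{\DD}(C_3\cup C_3)>0$, then $c^U_{\DD,6}=0$ forces $t(C_3,U)=0$, so both suspect terms vanish and $c^U_{\DD,8}=s_{\DD}(C_8)\,t(C_8,U)+s_{\DD}(C_3\oplus P_2\oplus C_3)\int_{[0,1]}t^{C_3\oplus P_1}_U(x)^2\dd x\ge s_{\DD}(C_8)\,t(C_8,U)$. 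If $s_{\DD}(C_3\oplus C_5)=s_{\DD}(C_3\cup C_5)=0$, the same lower bound holds outright. In every case, since $U\not=0$, Proposition~\ref{prop:cycle} gives $t(C_8,U)>0$, and combined with $s_{\DD}(C_8)>0$ this yields $c^U_{\DD,8}>0$. Hence the coefficients are not all zero and the first non-zero one is positive, so $\DD$ is of Class~I.

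I expect the only genuine subtlety to be the appearance of $t(C_8,U)>0$ via Proposition~\ref{prop:cycle}: it is precisely the strict positivity of the even-cycle density of a non-zero kernel that separates Class~I from Class~III here, and the three bulleted hypotheses are exactly what is needed to ensure that the $C_3\oplus C_5$ and $C_3\cup C_5$ contributions — the only ones capable of making $c^U_{\DD,8}$ negative — drop out.
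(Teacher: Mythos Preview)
Your proof is correct. The paper takes a slightly more modular route: rather than re-analyzing the coefficients for $\DD$ directly, it strips out all copies of $C_8$ to form a deck $\DD'$, invokes Lemma~\ref{lm:deck8-III} to conclude that $\DD'$ is of Class~III, and then notes that $c^U_{\DD,\ell}=c^U_{\DD',\ell}$ for $\ell=2,4,6$ while $c^U_{\DD,8}=c^U_{\DD',8}+s_{\DD}(C_8)t(C_8,U)$, so Proposition~\ref{prop:cycle} forces the first non-zero coefficient among $c^U_{\DD,2},\ldots,c^U_{\DD,8}$ to be positive. Your direct case analysis simply inlines the relevant portion of the proof of Lemma~\ref{lm:deck8-III}; the underlying content is identical, and the paper's version buys only brevity and the reusable template it later recycles in Lemmas~\ref{lm:deck10-I} and the Class~I parts of Section~\ref{sec:deck12}.
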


\begin{proof}
If the $6$-deck of $\DD$ is of Class I, then the $8$-deck $\DD$ is also of Class I.
We next assume that the $6$-deck of $\DD$ is of Class III.
Let $\DD'$ be the $8$-deck obtained from $\DD$ by removing all cycles of length eight.
Since the $8$-deck $\DD'$ satisfies the assumptions of Lemma~\ref{lm:deck8-III},
the $8$-deck $\DD'$ is of Class III.
It follows that for any non-zero kernel $U$,
all the coefficients $c^U_{\DD',2},\ldots,c^U_{\DD',8}$ are zero or
the first non-zero among these coefficients is positive.
Since $c^U_{\DD,\ell}=c^U_{\DD',\ell}$ for $\ell=2,4,6$ and $c^U_{\DD,8}=c^U_{\DD',8}+s_{\DD}(C_8)t(C_8,U)$,
we obtain using Proposition~\ref{prop:cycle} that
at least one of the coefficients $c^U_{\DD,2},\ldots,c^U_{\DD,8}$ is non-zero and
the first non-zero among these coefficients is positive.
We conclude that the $8$-deck $\DD$ is of Class I.
\end{proof}

\begin{lemma}
\label{lm:deck8-II}
An $8$-deck $\DD$ with $s_{\DD}(P_2)>0$ is of Class II if the $6$-deck of $\DD$ is of Class III and
at least one of the following holds:
\begin{itemize}
\item $s_{\DD}(C_3\oplus C_3)=0$ and $s_{\DD}(C_3\oplus C_5)>0$, or
\item $s_{\DD}(C_3\oplus C_3)=s_{\DD}(C_3\cup C_3)=0$ and $s_{\DD}(C_3\cup C_5)>0$.
\end{itemize}
\end{lemma}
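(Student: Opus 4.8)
The plan is to produce, in each of the two cases listed in the statement, a single non-zero balanced kernel $U$ with
\[c^U_{\DD,2}=c^U_{\DD,4}=c^U_{\DD,6}=0\qquad\mbox{and}\qquad c^U_{\DD,8}<0;\]
by the definition of Class~II this is exactly what is needed. Throughout I would use that $s_{\DD}(C_4)=s_{\DD}(C_6)=0$ (Lemmas~\ref{lm:deck4} and~\ref{lm:deck6}, since the $6$-deck of $\DD$ is of Class~III) and, exactly as in the proof of Lemma~\ref{lm:deck8-III}, that for a balanced kernel only principal graphs contribute to the sums~\eqref{eq:coeff} defining $c^U_{\DD,2},\dots,c^U_{\DD,8}$ (Propositions~\ref{prop:coeff2} and~\ref{prop:degreeone}). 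Thus $c^U_{\DD,2}=c^U_{\DD,4}=0$ automatically, the only principal $6$-edge graphs besides $C_6$ are $C_3\oplus C_3$ and $C_3\cup C_3$, so
\[c^U_{\DD,6}=s_{\DD}(C_3\oplus C_3)\int_{[0,1]}t^{C_3}_U(x)^2\dd x+s_{\DD}(C_3\cup C_3)\,t(C_3,U)^2,\]
and the only principal $8$-edge graphs are $C_8$, $C_3\oplus C_5$, $C_3\cup C_5$ and $C_3\oplus P_2\oplus C_3$, so $c^U_{\DD,8}$ is the sum of $s_{\DD}(H)\,t(H,U)$ over these four graphs. All kernels will be obtained from Lemma~\ref{lm:core} (with the density formulas of Lemma~\ref{lm:core-apply}) applied with $k=7$ and a sufficiently small $\delta>0$; the reason to take $k=7$ rather than $k=5$ is that it guarantees $t(C_8,U)\le\delta$, so that the non-negative $C_8$-term in $c^U_{\DD,8}$ is negligible even when $s_{\DD}(C_8)>0$.

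For the first case, $s_{\DD}(C_3\oplus C_3)=0$ and $s_{\DD}(C_3\oplus C_5)>0$, I would apply Lemma~\ref{lm:core} with $m=1$, $\sigma_1=0$, $\gamma_3=\gamma_5=\gamma_7=0$, $\tau_{1,3}=1$, $\tau_{1,5}=-1$ and $\tau_{1,7}=0$ (the hypothesis $\sigma_1^{k+1}\le\delta/2$ is trivially satisfied). Then $t(C_3,U)=\gamma_3=0$, so $c^U_{\DD,6}=s_{\DD}(C_3\cup C_3)\,t(C_3,U)^2=0$; moreover $t(C_3\cup C_5,U)=t(C_3,U)\,t(C_5,U)=0$ and $t(C_3\oplus P_2\oplus C_3,U)=\sigma_1^2\tau_{1,3}^2=0$, while $t(C_3\oplus C_5,U)=\tau_{1,3}\tau_{1,5}=-1$. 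Hence $c^U_{\DD,8}\le s_{\DD}(C_8)\,\delta-s_{\DD}(C_3\oplus C_5)$, which is negative once $\delta<s_{\DD}(C_3\oplus C_5)/(s_{\DD}(C_8)+1)$, and $U$ is a non-zero balanced kernel since the parameter $\tau_{1,3}$ is non-zero.

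For the second case, $s_{\DD}(C_3\oplus C_3)=s_{\DD}(C_3\cup C_3)=0$ and $s_{\DD}(C_3\cup C_5)>0$, the coefficient $c^U_{\DD,6}$ vanishes for \emph{every} balanced $U$, so it suffices to make $c^U_{\DD,8}$ negative. Here I would apply Lemma~\ref{lm:core} with $m=0$, $\gamma_3=1$, $\gamma_5=-1$ and $\gamma_7=0$. Then Lemma~\ref{lm:core-apply} (with $m=0$) gives $t(C_3\oplus C_5,U)=t(C_3\oplus P_2\oplus C_3,U)=0$ and $t(C_3\cup C_5,U)=t(C_3,U)\,t(C_5,U)=\gamma_3\gamma_5=-1$, so $c^U_{\DD,8}\le s_{\DD}(C_8)\,\delta-s_{\DD}(C_3\cup C_5)<0$ for $\delta$ small, again with $U$ non-zero and balanced. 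In both cases $U$ witnesses that $\DD$ is of Class~II. I expect the only real work beyond these substitutions to be the bookkeeping: verifying that the four graphs above are indeed all the principal $8$-edge graphs (this is where $s_{\DD}(C_4)=s_{\DD}(C_6)=0$ is used, to exclude blocks that are $C_4$, $C_6$, or $2$-connected non-cycles) and checking that each parameter assignment is admissible in Lemma~\ref{lm:core}; the density computations themselves are immediate from Lemma~\ref{lm:core-apply}.
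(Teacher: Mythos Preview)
Your proposal is correct and matches the paper's proof essentially line for line: the same two applications of Lemma~\ref{lm:core} with $k=7$ (parameters $m=1$, $\sigma_1=0$, $\gamma_\ell=0$, $\tau_{1,3}=1$, $\tau_{1,5}=-1$, $\tau_{1,7}=0$ in the first case; $m=0$, $\gamma_3=1$, $\gamma_5=-1$, $\gamma_7=0$ in the second), the same density computations via Lemma~\ref{lm:core-apply}, and the same control of the $C_8$-term via $t(C_8,U)\le\delta$. The only cosmetic difference is that the paper fixes $\delta$ so that $s_{\DD}(C_8)\delta\le 1/2$ to obtain $c^U_{\DD,8}\le-1/2$, whereas you choose $\delta$ slightly differently to get $c^U_{\DD,8}<0$; both are fine.
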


\begin{proof}
For each of the two cases described in the statement of the lemma,
we will find a non-zero kernel $U$ such that 
not all of the coefficients $c^U_{\DD,2},\ldots,c^U_{\DD,8}$ are zero and
the first non-zero coefficient among them is negative.
Since the $6$-deck of $\DD$ is of Class III,
we obtain that $s_{\DD}(C_4)=s_{\DD}(C_6)=0$ by Lemma~\ref{lm:deck6}.

If $s_{\DD}(C_3\oplus C_3)=0$ and $s_{\DD}(C_3\oplus C_5)>0$,
apply Lemma~\ref{lm:core} with $k=7$, $m=1$, any $\delta\in (0,1)$ such that $s_{\DD}(C_8)\delta\le 1/2$,
$\gamma_3=\gamma_5=\gamma_7=0$, $\sigma_1=\tau_{1,7}=0$, 
$\tau_{1,3}=1$ and $\tau_{1,5}=-1$ to get a balanced kernel $U$
with the properties given in the statement of Lemma~\ref{lm:core}.
Lemma~\ref{lm:core-apply} yields that $t(C_3,U)=t(C_3\oplus P_2\oplus C_3,U)=0$,
$t(C_3\oplus C_5,U)=-1$ and $t(C_8,U)\le \delta$.
Hence, we obtain that $c^U_{\DD,4}=0$, $c^U_{\DD,6}=0$ and
\[c^U_{\DD,8}=s_{\DD}(C_8)t(C_8,U)+s_{\DD}(C_3\oplus C_5)t(C_3\oplus C_5,U)\le -1/2.\]
We conclude that $\DD$ is of Class II.

We next consider the case when $s_{\DD}(C_3\oplus C_3)=s_{\DD}(C_3\cup C_3)=0$ and $s_{\DD}(C_3\cup C_5)>0$.
We apply Lemma~\ref{lm:core} with $k=7$, $m=0$, any $\delta\in (0,1)$ such that $s_{\DD}(C_8)\delta\le 1/2$, $\gamma_3=1$, $\gamma_5=-1$, and $\gamma_7=0$.
Observe that $t(C_3\cup C_5,U)=t(C_3,U)t(C_5,U)=-1$ and $t(C_3\oplus C_5,U)=t(C_3\oplus P_2\oplus C_3,U)=0$.
Hence, the coefficients $c^U_{\DD,4}=0$ and $c^U_{\DD,6}=0$, and
\[c^U_{\DD,8}=s_{\DD}(C_8)t(C_8,U)+s_{\DD}(C_3\cup C_5)t(C_3\cup C_5,U)\le -1/2.\]
We conclude that $\DD$ is of Class II in this case, too.
\end{proof}

Lemmas~\ref{lm:deck4}--\ref{lm:deck8-II} imply the following theorem.
In addition to the diagram in Figure~\ref{fig:deck8},
we also provide the classification in Table~\ref{tab:deck8}.

\begin{theorem}
\label{thm:deck8}
Let $\DD$ be an $8$-deck with $s_{\DD}(P_2)>0$.
The deck $\DD$ is of Class I, Class II or Class III as determined
in the diagram in Figure~\ref{fig:deck8}.
\end{theorem}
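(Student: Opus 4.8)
The plan is to obtain Theorem~\ref{thm:deck8} as a purely combinatorial consequence of Lemmas~\ref{lm:deck4}--\ref{lm:deck8-II}: one must check that the hypotheses of those lemmas, restricted to $8$-decks $\DD$ with $s_{\DD}(P_2)>0$, cover every possible pattern of the relevant subgraph counts being zero or positive, and that each pattern is assigned to the class displayed in Figure~\ref{fig:deck8}. First I would set up the reduction to the $6$-deck of $\DD$. Since $s_{\DD}(P_2)>0$ passes to the $6$-deck of $\DD$ (any two adjacent edges of a graph in $\DD$ survive in some $6$-edge subgraph), Lemma~\ref{lm:deck6} applies and shows that the $6$-deck is of Class~I precisely when $s_{\DD}(C_4)>0$ or $s_{\DD}(C_6)>0$, and of Class~III otherwise, never of Class~II. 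In the former case the monotonicity observation from Section~\ref{sec:prelim} (if the $\ell'$-deck of $\DD$ is of Class~I, then so is $\DD$), which is also the first alternative of Lemma~\ref{lm:deck8-I}, yields that $\DD$ is of Class~I, matching the corresponding branch of the diagram.

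The core of the argument is the case $s_{\DD}(C_4)=s_{\DD}(C_6)=0$, so that the $6$-deck is of Class~III. Here I would split on the value of $s_{\DD}(C_8)$ and exploit the following Boolean fact: the three-way disjunction occurring in both Lemma~\ref{lm:deck8-III} and Lemma~\ref{lm:deck8-I}, namely ``$s_{\DD}(C_3\oplus C_3)>0$, or $s_{\DD}(C_3\oplus C_5)=0$ and $s_{\DD}(C_3\cup C_3)>0$, or $s_{\DD}(C_3\oplus C_5)=s_{\DD}(C_3\cup C_5)=0$'', has negation equal to ``$s_{\DD}(C_3\oplus C_3)=0$, and either $s_{\DD}(C_3\oplus C_5)>0$, or $s_{\DD}(C_3\oplus C_5)=0$, $s_{\DD}(C_3\cup C_3)=0$ and $s_{\DD}(C_3\cup C_5)>0$'', which is exactly the disjunction of the two hypotheses of Lemma~\ref{lm:deck8-II}. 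Consequently, when $s_{\DD}(C_8)=0$ every such $\DD$ falls under either Lemma~\ref{lm:deck8-III} (Class~III) or Lemma~\ref{lm:deck8-II} (Class~II), and when $s_{\DD}(C_8)>0$ every such $\DD$ falls under either Lemma~\ref{lm:deck8-I} (Class~I) or Lemma~\ref{lm:deck8-II} (Class~II; its hypotheses do not mention $s_{\DD}(C_8)$). I would then walk through the leaves of Figure~\ref{fig:deck8} and match each one to the lemma certifying its label, and finally note that no inconsistency can arise because the three classes are mutually exclusive by definition (Class~III excludes Class~II through its ``for every kernel'' clause and excludes Class~I since it requires a kernel annihilating all coefficients), so once the sufficient conditions of the lemmas are seen to be jointly exhaustive they also pin down the class uniquely.

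The main obstacle is not conceptual but the careful bookkeeping of the previous paragraph: one must verify that the negation of the three-fold disjunction in Lemmas~\ref{lm:deck8-III} and~\ref{lm:deck8-I} coincides exactly with the two-fold disjunction in Lemma~\ref{lm:deck8-II}, with the conditions on $s_{\DD}(C_3\oplus C_5)$, $s_{\DD}(C_3\cup C_3)$ and $s_{\DD}(C_3\cup C_5)$ correctly interleaved, and that the $s_{\DD}(C_8)=0$ and $s_{\DD}(C_8)>0$ halves glue together consistently along the diagram, bearing in mind that Lemma~\ref{lm:deck8-II} is blind to $s_{\DD}(C_8)$ whereas Lemmas~\ref{lm:deck8-III} and~\ref{lm:deck8-I} differ essentially only in that value. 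Once these routine checks are carried out, the theorem follows at once.
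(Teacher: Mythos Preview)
Your proposal is correct and follows essentially the same approach as the paper: the paper's own proof is a one-line instruction to inspect the diagram in Figure~\ref{fig:deck8} and match each leaf to the hypotheses of Lemmas~\ref{lm:deck8-I}, \ref{lm:deck8-II}, and~\ref{lm:deck8-III}, and you carry out precisely this inspection, with the added benefit of making the Boolean complementation between the three-way disjunction of Lemmas~\ref{lm:deck8-III}/\ref{lm:deck8-I} and the two-way disjunction of Lemma~\ref{lm:deck8-II} explicit. Your argument is in fact more detailed than the paper's.
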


\begin{proof}
The proof follows by inspecting the diagram in Figure~\ref{fig:deck8} and verifying that
every path leading to the label Class I corresponds to the assumptions of Lemma~\ref{lm:deck8-I},
every path leading to the label Class II corresponds to the assumptions of Lemma~\ref{lm:deck8-II}, and
every path leading to the label Class III corresponds to the assumptions of Lemma~\ref{lm:deck8-III}.
\end{proof}

\begin{table}
\begin{center}
\scalebox{0.9}{
\begin{tabular}{|c|c|ccc|cccc|}
\hline
$s_{\DD}(\cdot)$ & $C_4$ & $C_6$ & $C_3\cup C_3$ & $C_3\oplus C_3$ & $C_8$ & $C_3\cup C_5$ & $C_3\oplus C_5$ & $C_3\oplus P_2\oplus C_3$\\
\hline
Class I   & $>0$    & $\star$ & $\star$ & $\star$    & $\star$ & $\star$ & $\star$ & $\star$ \\
Class I   & $0$     &  $>0$   & $\star$ & $\star$    & $\star$ & $\star$ & $\star$ & $\star$ \\
Class I   & $0$     &  $0$    & $\star$ &  $>0$      &  $>0$   & $\star$ & $\star$ & $\star$ \\
Class II  & $0$     &  $0$    & $\star$ &  $0$       &  $>0$   & $\star$ &  $>0$   & $\star$ \\
Class I   & $0$     &  $0$    &  $>0$   &  $0$       &  $>0$   & $\star$ &  $0$    & $\star$ \\
Class II  & $0$     &  $0$    &  $0$    &  $0$       &  $>0$   &  $>0$   &  $0$    & $(0)$   \\
Class I   & $0$     &  $0$    &  $0$    &  $0$       &  $>0$   &  $0$    &  $0$    & $(0)$   \\
Class III & $0$     &  $0$    & $\star$ &  $>0$      &  $0$    & $\star$ & $\star$ & $\star$ \\
Class II  & $0$     &  $0$    & $\star$ &  $0$       &  $0$    & $\star$ &  $>0$   & $\star$ \\
Class III & $0$     &  $0$    &  $>0$   &  $0$       &  $0$    & $\star$ &  $0$    & $\star$ \\
Class II  & $0$     &  $0$    &  $0$    &  $0$       &  $0$    &  $>0$   &  $0$    & $(0)$   \\
Class III & $0$     &  $0$    &  $0$    &  $0$       &  $0$    &  $0$    &  $0$    & $(0)$   \\
\hline
\end{tabular}}
\end{center}
\caption{The classification of $8$-decks $\DD$ with $s_{\DD}(P_2)>0$. An entry $\star$ means arbitrary multiplicity, and
         an entry $(0)$ represents that other columns of the same row imply that $s_{\DD}(\cdot)$ is $0$.}
\label{tab:deck8}
\end{table}

\begin{figure}
\begin{center}
\epsfbox{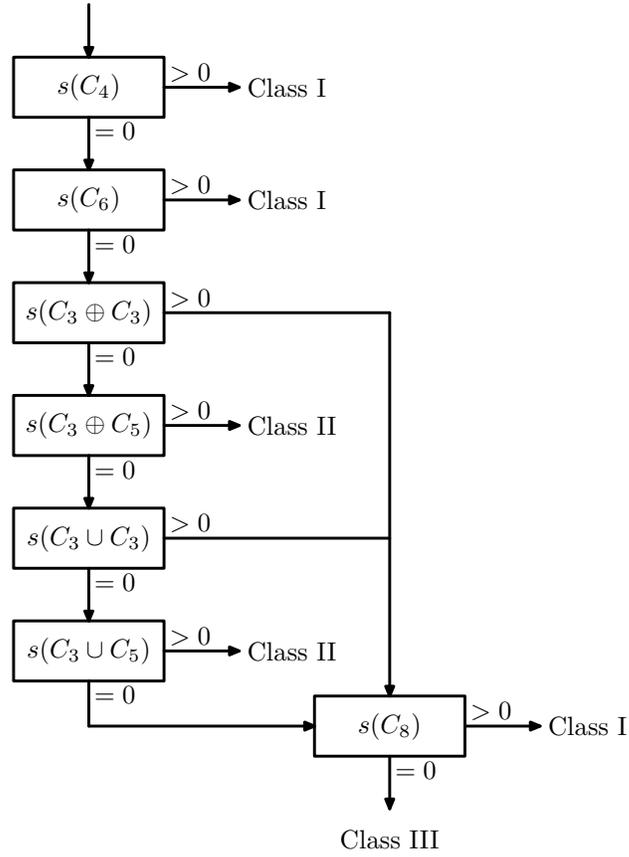}
\end{center}
\caption{The classification of $8$-decks $\DD$ with $s_{\DD}(P_2)>0$; we omit the subscript $\DD$ in the diagram.}
\label{fig:deck8}
\end{figure}

Theorem~\ref{thm:deck8} yields the following corollary.

\begin{corollary}
\label{cor:deck8-III}
Every $8$-deck $\DD$ with $s_{\DD}(P_2)>0$ that is of Class III satisfies that $s_{\DD}(C_4)=s_{\DD}(C_6)=s_{\DD}(C_8)=0$ and
exactly one of the following:
\begin{itemize}
\item $s_{\DD}(C_3\oplus C_3)>0$,
\item $s_{\DD}(C_3\oplus C_3)=s_{\DD}(C_3\oplus C_5)=0$, $s_{\DD}(C_3\cup C_3)>0$, or
\item $s_{\DD}(C_3\oplus C_3)=s_{\DD}(C_3\oplus C_5)=s_{\DD}(C_3\cup C_3)=s_{\DD}(C_3\cup C_5)=0$.
\end{itemize}
\end{corollary}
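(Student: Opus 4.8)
The plan is to deduce the statement from Theorem~\ref{thm:deck8} together with Lemmas~\ref{lm:deck4}--\ref{lm:deck8-II}; equivalently, one reads off the rows of Table~\ref{tab:deck8} carrying the label Class~III. Concretely, I would argue as follows. Let $\DD$ be an $8$-deck with $s_{\DD}(P_2)>0$ that is of Class~III. First I would note that the $6$-deck of $\DD$ must itself be of Class~III: by the observation recorded after Proposition~\ref{prop:coeff2} (if the $\ell'$-deck of $\DD$ is of Class~I, respectively Class~II, then so is $\DD$), a $6$-deck of Class~I or Class~II would force $\DD$ into the same class. Hence Lemmas~\ref{lm:deck4} and~\ref{lm:deck6} give $s_{\DD}(C_4)=s_{\DD}(C_6)=0$.

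Next I would establish $s_{\DD}(C_8)=0$. Suppose for contradiction that $s_{\DD}(C_8)>0$. If in addition one of the three bullet conditions of Lemma~\ref{lm:deck8-I} holds, then $\DD$ is of Class~I, a contradiction. Otherwise $s_{\DD}(C_3\oplus C_3)=0$, and we split according to $s_{\DD}(C_3\oplus C_5)$: if $s_{\DD}(C_3\oplus C_5)>0$, the first bullet of Lemma~\ref{lm:deck8-II} applies; if $s_{\DD}(C_3\oplus C_5)=0$, then failure of the second and third bullets of Lemma~\ref{lm:deck8-I} forces $s_{\DD}(C_3\cup C_3)=0$ and $s_{\DD}(C_3\cup C_5)>0$, so the second bullet of Lemma~\ref{lm:deck8-II} applies. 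In every case $\DD$ is of Class~II, contradicting Class~III. Therefore $s_{\DD}(C_8)=0$.

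Finally, with $s_{\DD}(C_4)=s_{\DD}(C_6)=s_{\DD}(C_8)=0$ and the $6$-deck of $\DD$ of Class~III, I would run one more short nested case split on the remaining principal $8$-edge graphs. If $s_{\DD}(C_3\oplus C_3)>0$, the first listed alternative holds. If $s_{\DD}(C_3\oplus C_3)=0$, then necessarily $s_{\DD}(C_3\oplus C_5)=0$, for otherwise the first bullet of Lemma~\ref{lm:deck8-II} would make $\DD$ of Class~II; consequently, if $s_{\DD}(C_3\cup C_3)>0$ the second listed alternative holds, while if $s_{\DD}(C_3\cup C_3)=0$ then the second bullet of Lemma~\ref{lm:deck8-II} forces $s_{\DD}(C_3\cup C_5)=0$, giving the third listed alternative. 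Mutual exclusivity is immediate, since the first alternative has $s_{\DD}(C_3\oplus C_3)>0$ whereas the other two have it zero, and the second alternative has $s_{\DD}(C_3\cup C_3)>0$ whereas the third has it zero. There is no real obstacle here: all of the analytic work is already packaged in Lemmas~\ref{lm:deck8-I} and~\ref{lm:deck8-II}, and the only thing requiring attention is keeping the nested case distinction tidy.
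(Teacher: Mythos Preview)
Your argument is correct and matches the paper's approach: the corollary is stated there as an immediate consequence of Theorem~\ref{thm:deck8}, i.e., of reading off the Class~III rows of Table~\ref{tab:deck8}, and you simply spell out that case analysis directly from Lemmas~\ref{lm:deck8-I} and~\ref{lm:deck8-II}. One minor point: the inheritance observation you invoke (``if the $\ell'$-deck of $\DD$ is of Class~I or~II then so is $\DD$'') is recorded in the paragraph \emph{preceding} Proposition~\ref{prop:coeff2}, not after it.
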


\section{Decks with ten edges}
\label{sec:deck10}
In this section we prove Theorem~\ref{thm:deck10}, which determines classes of $10$-decks;
the statement is illustrated in Figure~\ref{fig:deck10}.
Lemmas \ref{lm:deck10-III}, \ref{lm:deck10-I}, and \ref{lm:deck10-II} describe when a given $10$-deck is of 
Class III, Class I, or Class II, respectively.
\begin{figure}
\begin{center}
\epsfbox{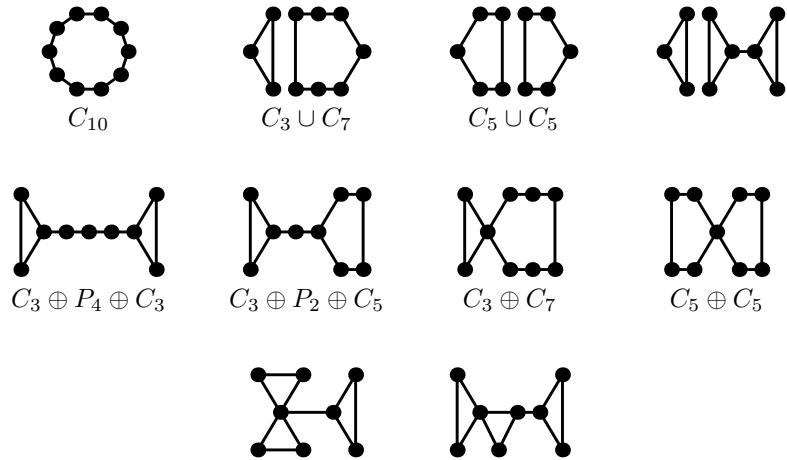}
\end{center}
\caption{Principal $10$-edge graphs.}
\label{fig:princ10}
\end{figure}
\begin{lemma}
\label{lm:deck10-III}
A $10$-deck $\DD$ is of Class III if the $8$-deck of $\DD$ is of Class III,
$s_{\DD}(C_{10})=0$ and at least one of the following holds:
\begin{itemize}
\item $s_{\DD}(C_3\oplus C_3)>0$,
\item $s_{\DD}(C_3\cup C_3)>0$, $s_{\DD}(C_3\oplus C_7)=0$ and $s_{\DD}(C_3\oplus P_2\oplus C_3)>0$,
\item $s_{\DD}(C_3\cup C_3)>0$, $s_{\DD}(C_3\oplus C_7)=0$ and $4s_{\DD}(C_3\oplus P_4\oplus C_3)s_{\DD}(C_5\oplus C_5)\ge s_{\DD}(C_3\oplus P_2\oplus C_5)^2$, or
\item $s_{\DD}(C_3\oplus C_3)=s_{\DD}(C_3\cup C_3)=s_{\DD}(C_3\oplus C_7)=s_{\DD}(C_3\cup C_7)=0$.
\end{itemize}
\end{lemma}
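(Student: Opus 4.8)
The plan is to follow the same template used for the $8$-deck results (Lemmas~\ref{lm:deck4}, \ref{lm:deck6}, \ref{lm:deck8-III}). I would fix a $10$-deck $\DD$ satisfying the hypotheses. Since the $8$-deck of $\DD$ is of Class III, Corollary~\ref{cor:deck8-III} gives $s_{\DD}(C_4)=s_{\DD}(C_6)=s_{\DD}(C_8)=0$, and together with $s_{\DD}(C_{10})=0$ this means no graph in $\DD$ contains an even cycle of length at most $10$. First I would apply Lemma~\ref{lm:core-III} with $k=10$ to produce a non-zero (balanced) kernel $U$ with $c^U_{\DD,2}=\cdots=c^U_{\DD,10}=0$; this shows $\DD$ is not of Class I. It remains to show that for every kernel $U$, the first non-zero coefficient among $c^U_{\DD,2},\dots,c^U_{\DD,10}$ (if any) is positive, which then places $\DD$ in Class III.

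For that direction, if $U$ is not balanced then $c^U_{\DD,2}>0$ by Proposition~\ref{prop:coeff2}, so I may assume $U$ is balanced. By Proposition~\ref{prop:degreeone} the only subgraphs contributing to $c^U_{\DD,4},\dots,c^U_{\DD,10}$ are principal graphs; with $s_{\DD}(C_4)=s_{\DD}(C_6)=s_{\DD}(C_8)=s_{\DD}(C_{10})=0$, the relevant principal graphs are $C_3\cup C_3$, $C_3\oplus C_3$ at level $6$; $C_3\cup C_5$, $C_3\oplus C_5$, $C_3\oplus P_2\oplus C_3$ at level $8$ (these give $c^U_{\DD,8}=0$ in the relevant subcases by Corollary~\ref{cor:deck8-III} or the analysis below); and at level $10$ the principal graphs from Figure~\ref{fig:princ10}, namely $C_3\cup C_7$, $C_5\cup C_5$, $C_3\oplus C_7$, $C_5\oplus C_5$, $C_3\oplus P_2\oplus C_5$, $C_3\oplus P_4\oplus C_3$, and $C_3\oplus P_2\oplus C_3\oplus P_0\oplus$ type gluings — but using $t(H,U)$ for each I can write these densities in terms of $t(C_3,U)$, $t_U^{C_3}$, $t_U^{C_5}$ via \eqref{eq:glue}. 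The key identities are $t(C_3\oplus P_n\oplus C_3,U)=\int t_U^{C_3}(U^n t_U^{C_3})$, similarly for the mixed and $C_5$ versions, and $t(C_3\cup C_5,U)=t(C_3,U)t(C_5,U)$, etc. I would then do a case split exactly mirroring the four bullets:

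\textbf{Case $s_{\DD}(C_3\oplus C_3)>0$:} then $c^U_{\DD,6}\ge s_{\DD}(C_3\oplus C_3)\int t_U^{C_3}(x)^2\dd x\ge 0$, and it vanishes only if $t_U^{C_3}\equiv 0$ a.e., in which case every principal $8$- and $10$-edge graph with a $C_3$ end-block has density zero; combined with the absence of even cycles this forces $c^U_{\DD,8}=c^U_{\DD,10}=0$, so the first nonzero coefficient is $c^U_{\DD,6}>0$. \textbf{Case $s_{\DD}(C_3\cup C_3)>0$, $s_{\DD}(C_3\oplus C_7)=0$:} here $c^U_{\DD,6}\ge s_{\DD}(C_3\cup C_3)t(C_3,U)^2\ge 0$, vanishing only if $t(C_3,U)=0$; then $t(C_3\cup C_5,U)=t(C_3\cup C_7,U)=0$, and since also $s_{\DD}(C_3\oplus C_7)=0$, the remaining $10$-edge contributions come from $C_3\oplus P_2\oplus C_5$, $C_3\oplus P_4\oplus C_3$, $C_5\oplus C_5$ (and squares $t(C_3\oplus P_1,U)$-type nonnegative terms), giving $c^U_{\DD,10}=s_{33}\,a + s_{35}\,b + s_{55}\,c$ where $a=\int(U t_U^{C_3})^2\ge 0$, $c=\int(t_U^{C_5})^2\ge 0$, and $b=\int (U t_U^{C_3})\,t_U^{C_5}$ satisfies $b^2\le ac$ by Cauchy--Schwarz. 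The subcase bullet assumption $s_{\DD}(C_3\oplus P_2\oplus C_3)>0$, i.e. $s_{33}>0$ with the additional $P_2$-gluing also contributing nonnegatively at level $8$, or the assumption $4s_{33}s_{55}\ge s_{35}^2$, each guarantee $s_{33}a+s_{35}b+s_{55}c\ge 0$ (a nonnegative quadratic form in $a,c$ once $b^2\le ac$ and the discriminant condition hold), so $c^U_{\DD,10}\ge 0$. \textbf{Case $s_{\DD}(C_3\oplus C_3)=s_{\DD}(C_3\cup C_3)=s_{\DD}(C_3\oplus C_7)=s_{\DD}(C_3\cup C_7)=0$:} then $c^U_{\DD,6}=0$ automatically, and $c^U_{\DD,10}$ reduces to the same nonnegative combination $s_{33}a+s_{35}b+s_{55}c$ plus $s_{\DD}(C_5\cup C_5)t(C_5,U)^2\ge 0$, but now I additionally need that no $s_{35}$-term can be made dominant — this follows because $s_{35}>0$ forces (by the structure of $H$, since $C_3\oplus P_2\oplus C_5\subseteq H$ yields both a $C_3\oplus P_2\oplus C_3$ and a $C_5\oplus C_5$ as subgraphs) $s_{33}>0$ and $s_{55}>0$ with $4s_{33}s_{55}\ge s_{35}^2$, a combinatorial subgraph-counting fact.

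The main obstacle is precisely that last combinatorial point and the quadratic-form bookkeeping at level $10$: I must verify that whenever the $C_3\oplus P_2\oplus C_5$ graph appears as a subgraph, the graphs $C_3\oplus P_4\oplus C_3$ and $C_5\oplus C_5$ appear often enough that $4s_{DD}(C_3\oplus P_4\oplus C_3)s_{DD}(C_5\oplus C_5)\ge s_{DD}(C_3\oplus P_2\oplus C_5)^2$ holds — this is an inequality between subgraph counts, proved by matching each copy of $C_3\oplus P_2\oplus C_5$ to the pair (the "doubled" $C_3$-side gluing, the "doubled" $C_5$-side gluing) and applying Cauchy--Schwarz over the indexing vertices, analogous to how $t(C_3\oplus C_5,U)^2\le t(C_3\oplus C_3,U)t(C_5\oplus C_5,U)$. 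Once these inequalities and the identification of which principal graphs contribute are in hand, the proof closes by noting $c^U_{\DD,8}=0$ in all four cases (using Corollary~\ref{cor:deck8-III} and $t(C_3,U)=0$ or $t_U^{C_3}\equiv 0$ as appropriate) and invoking the Class III definition.
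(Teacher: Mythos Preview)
Your overall template is the same as the paper's, and the opening (Lemma~\ref{lm:core-III} to rule out Class I, then Proposition~\ref{prop:coeff2} to reduce to balanced $U$) is correct. However, two of your four cases contain genuine gaps.

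\textbf{Second bullet.} You try to handle $s_{\DD}(C_3\oplus P_2\oplus C_3)>0$ entirely at level $10$ via the quadratic form $s_{33}a+s_{35}b+s_{55}c$ with $b^2\le ac$. But $s_{\DD}(C_3\oplus P_2\oplus C_3)>0$ gives you no control over the level-$10$ discriminant $4s_{33}s_{55}-s_{35}^2$, and the inequality $s_{33}a+s_{35}b+s_{55}c\ge 0$ can fail (take $s_{33}=s_{55}=1$, $s_{35}$ large, and $a=c=-b$). The paper instead uses this hypothesis at level $8$: once $t(C_3,U)=0$, one has $c^U_{\DD,8}=s_{\DD}(C_3\oplus P_2\oplus C_3)\int (t_U^{C_3\oplus P_1})^2$, so either $c^U_{\DD,8}>0$ or $t_U^{C_3\oplus P_1}\equiv 0$ a.e. In the latter case $t(C_3\oplus P_4\oplus C_3,U)=t(C_3\oplus P_2\oplus C_5,U)=0$, and the only surviving level-$10$ terms are $s_{\DD}(C_5\cup C_5)t(C_5,U)^2+s_{\DD}(C_5\oplus C_5)\int (t_U^{C_5})^2\ge 0$. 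Your closing remark that ``$c^U_{\DD,8}=0$ in all four cases'' is therefore also wrong in this bullet.

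\textbf{Fourth bullet.} Your proposed combinatorial Cauchy--Schwarz, that $C_3\oplus P_2\oplus C_5\subseteq H$ forces $C_3\oplus P_4\oplus C_3\subseteq H$ and $C_5\oplus C_5\subseteq H$ with $4s_{33}s_{55}\ge s_{35}^2$, is simply false (take $H=C_3\oplus P_2\oplus C_5$ itself: it contains neither $C_3\oplus P_4\oplus C_3$ nor $C_5\oplus C_5$). The correct, much simpler, argument is that the hypotheses of this bullet together with Corollary~\ref{cor:deck8-III} give $s_{\DD}(C_3\cup C_5)=0$; since $C_3\oplus P_2\oplus C_5$ contains a vertex-disjoint $C_3\cup C_5$ (delete the two path edges), it follows that $s_{\DD}(C_3\oplus P_2\oplus C_5)=0$. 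Thus $s_{35}=0$ and the level-$10$ contribution reduces to a sum of non-negative terms.

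Two smaller points: in Case~1 you claim $c^U_{\DD,10}=0$ when $t_U^{C_3}\equiv 0$, but the $C_5\cup C_5$ and $C_5\oplus C_5$ terms survive and are merely non-negative; and your $a,b,c$ are off by one power of $U$ (the level-$10$ quadratic form is in $t_U^{C_3\oplus P_2}=U^2t_U^{C_3}$ and $t_U^{C_5}$, not in $Ut_U^{C_3}$ and $t_U^{C_5}$).
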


\begin{proof}
Fix a $10$-deck $\DD$ such that the $8$-deck of $\DD$ is of Class III, $s_{\DD}(C_{10})=0$ and
that satisfies at least one of the four cases given in the statement of the lemma.
By Corollary~\ref{cor:deck8-III}, it holds that $s_{\DD}(C_{\ell})=0$ for $\ell=2,4,6,8$.
Hence, Lemma~\ref{lm:core-III} yields that there exists a non-zero kernel $U$
such that $c^U_{\DD,2}=\cdots=c^U_{\DD,10}=0$.

We next fix a non-zero kernel $U$ and show that
either all the coefficients $c^U_{\DD,2},\ldots,c^U_{\DD,10}$ are zero or
at least one of them is non-zero and the first non-zero among them is positive.
If $U$ is not balanced, then $c^U_{\DD,2}> 0$ by Proposition~\ref{prop:coeff2}.
Otherwise, $t(H,U)=0$ for every graph $H$ with at most ten edges that is not principal.
This yields that $c^U_{\DD,2}=0$ and $c^U_{\DD,4}=0$.

We first assume that $s_{\DD}(C_3\oplus C_3)>0$.
As in the proof of Lemma~\ref{lm:deck8-III},
we observe that the coefficient $c^U_{\DD,6}$ is positive unless $t_U^{C_3}(x)=0$ for almost every $x\in [0,1]$.
In the latter case, $t(H,U)=0$ for every principal graph $H$ with at most ten edges
unless $H$ is an even cycle, $H=C_5\cup C_5$ or $H=C_5\oplus C_5$.
Since $t(C_5\cup C_5,U)\ge 0$ and $t(C_5\oplus C_5,U)\ge 0$,
we conclude that the $10$-deck $\DD$ is indeed of Class III.

We assume in the rest of the proof that $s_{\DD}(C_3\oplus C_3)=0$ (otherwise, the first case of the lemma applies);
Corollary~\ref{cor:deck8-III} implies that $s_{\DD}(C_3\oplus C_5)=0$.
Since all the three remaining cases also include the assumption that $s_{\DD}(C_3\oplus C_7)=0$,
we will also assume that $s_{\DD}(C_3\oplus C_7)=0$.
In addition, it holds that $s_{\DD}(C_3\oplus P_2\oplus C_3)>0$ in the second case that we consider.
The same arguments as presented in the proof of Lemma~\ref{lm:deck8-III} yields that
the coefficient $c^U_{\DD,6}=s_{\DD}(C_3\cup C_3)t(C_3,U)^2$ is non-negative and
it is equal to zero if and only if $t(C_3,U)=0$.
If $t(C_3,U)=0$, then $c^U_{\DD,8}=s_{\DD}(C_3\oplus P_2\oplus C_3)t(C_3\oplus P_2\oplus C_3,U)$,
which implies that $c^U_{\DD,8}$ is non-negative and
it is equal to zero if and only if $t^{C_3\oplus P_1}_U(x)=0$ for almost every $x\in [0,1]$.
Hence, $c^U_{\DD,6}=0$ and $c^U_{\DD,8}=0$ if and only if $t(C_3,U)=0$ and $t^{C_3\oplus P_1}_U(x)=0$ for almost every $x\in [0,1]$;
otherwise, the first non-zero of these two coefficients is positive.
If $t(C_3,U)=0$ and $t^{C_3\oplus P_1}_U(x)=0$ for almost every $x\in [0,1]$,
then the coefficient $c^U_{\DD,10}$ is equal to
\[c^U_{\DD,10}=s_{\DD}(C_5\cup C_5)t(C_5\cup C_5,U)+s_{\DD}(C_5\oplus C_5)t(C_5\oplus C_5,U),\]
i.e., $c^U_{\DD,10}$ is non-negative.
This concludes the analysis of the case when $s_{\DD}(C_3\oplus P_2\oplus C_3)>0$.

We assume in the rest of the proof that $s_{\DD}(C_3\oplus P_2\oplus C_3)=0$
in addition to $s_{\DD}(C_3\oplus C_3)=s_{\DD}(C_3\oplus C_5)=s_{\DD}(C_3\oplus C_7)=0$.
In the third case, it holds that $s_{\DD}(C_3\cup C_3)>0$ and 
$4s_{\DD}(C_3\oplus P_4\oplus C_3)s_{\DD}(C_5\oplus C_5)\ge s_{\DD}(C_3\oplus P_2\oplus C_5)^2$.
Since $c^U_{\DD,6}=s_{\DD}(C_3\cup C_3)t(C_3,U)^2$,
we conclude that either $c^U_{\DD,6}$ is positive or $t(C_3,U)=0$.
In the latter case, it holds that
\[c^U_{\DD,8}=s_{\DD}(C_3\oplus P_2\oplus C_3)t(C_3\oplus P_2\oplus C_3,U)\ge 0\]
and
\begin{align*}
 c^U_{\DD,10} =& s_{\DD}(C_3\oplus P_4\oplus C_3)t(C_3\oplus P_4\oplus C_3,U)+\\
               & s_{\DD}(C_3\oplus P_2\oplus C_5)t(C_3\oplus P_2\oplus C_5,U)+\\
	       & s_{\DD}(C_5\oplus C_5)t(C_5\oplus C_5,U)+s_{\DD}(C_5\cup C_5)t(C_5\cup C_5,U).
\end{align*}
Since it holds that $t(C_5\cup C_5,U)=t(C_5,U)^2\ge 0$, it follows that
\begin{align*}
 c^U_{\DD,10}\ge\int_{[0,1]}
                \begin{pmatrix}t_U^{C_3\oplus P_2}(x) \\ t_U^{C_5}(x) \end{pmatrix}^T
                \begin{pmatrix}
                s_{\DD}(C_3\oplus P_4\oplus C_3) & \frac{s_{\DD}(C_3\oplus P_2\oplus C_5)}{2} \\
                \frac{s_{\DD}(C_3\oplus P_2\oplus C_5)}{2} & s_{\DD}(C_5\oplus C_5)
                \end{pmatrix}
                \begin{pmatrix}t_U^{C_3\oplus P_2}(x) \\ t_U^{C_5}(x) \end{pmatrix}\dd x.
\end{align*}
The assumption that $4s_{\DD}(C_3\oplus P_4\oplus C_3)s_{\DD}(C_5\oplus C_5)\ge s_{\DD}(C_3\oplus P_2\oplus C_5)^2$
implies that the matrix
\[\begin{pmatrix}
  s_{\DD}(C_3\oplus P_4\oplus C_3) & \frac{s_{\DD}(C_3\oplus P_2\oplus C_5)}{2} \\
  \frac{s_{\DD}(C_3\oplus P_2\oplus C_5)}{2} & s_{\DD}(C_5\oplus C_5)
  \end{pmatrix}\]
is positive semidefinite,
which yields that the product in the integral above is non-negative for every $x\in [0,1]$.
It follows that $c^U_{\DD,10}\ge 0$.
This concludes the analysis of the third case of the lemma.

It remains to analyze the final case.
In this case, it holds that $s_{\DD}(C_3\cup C_3)=0$ and $s_{\DD}(C_3\oplus C_3)=0$,
which yields that $s_{\DD}(C_3\cup C_5)=0$ by Corollary~\ref{cor:deck8-III}.
It follows that $c^U_{\DD,2}=c^U_{\DD,4}=c^U_{\DD,6}=c^U_{\DD,8}=0$ and
$s_{\DD}(H)$ can be non-zero only for the following $10$-edge principal graphs $H$:
$C_5\cup C_5$, $C_5\oplus C_5$ and $C_3\oplus P_4\oplus C_3$.
Since $t(H,U)\ge 0$ for each of these graphs $H$,
we obtain that $c^U_{\DD,10}$ is non-negative.
Hence, the $10$-deck $\DD$ is of Class III.
\end{proof}

\begin{lemma}
\label{lm:deck10-I}
A $10$-deck $\DD$ is of Class I if either the $8$-deck of $\DD$ is of Class I or
the $8$-deck of $\DD$ is of Class III, $s_{\DD}(C_{10})>0$ and at least one of the following holds:
\begin{itemize}
\item $s_{\DD}(C_3\oplus C_3)>0$,
\item $s_{\DD}(C_3\cup C_3)>0$, $s_{\DD}(C_3\oplus C_7)=0$ and $s_{\DD}(C_3\oplus P_2\oplus C_3)>0$,
\item $s_{\DD}(C_3\cup C_3)>0$, $s_{\DD}(C_3\oplus C_7)=0$ and $4s_{\DD}(C_3\oplus P_4\oplus C_3)s_{\DD}(C_5\oplus C_5)\ge s_{\DD}(C_3\oplus P_2\oplus C_5)^2$, or
\item $s_{\DD}(C_3\oplus C_3)=s_{\DD}(C_3\cup C_3)=s_{\DD}(C_3\oplus C_7)=s_{\DD}(C_3\cup C_7)=0$.
\end{itemize}
\end{lemma}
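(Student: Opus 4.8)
The plan is to follow verbatim the template used in the proof of Lemma~\ref{lm:deck8-I}, now with Lemma~\ref{lm:deck10-III} playing the role that Lemma~\ref{lm:deck8-III} played there. If the $8$-deck of $\DD$ is of Class I, then $\DD$ is of Class I by the monotonicity observation recorded right after the definition of the three classes (a deck whose $\ell'$-deck is of Class I is itself of Class I). So we may assume that the $8$-deck of $\DD$ is of Class III, that $s_{\DD}(C_{10})>0$, and that one of the four listed conditions holds; the task is then to show $\DD$ is of Class I under these hypotheses.

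The next step is to pass to the $10$-deck $\DD'$ obtained from $\DD$ by deleting every copy of $C_{10}$, and to check that $\DD'$ satisfies the hypotheses of Lemma~\ref{lm:deck10-III}. Trivially $s_{\DD'}(C_{10})=0$. Every graph $H$ whose multiplicity is referenced in the four conditions, namely $C_3\oplus C_3$, $C_3\cup C_3$, $C_3\oplus C_7$, $C_3\cup C_7$, $C_3\oplus P_2\oplus C_3$, $C_3\oplus P_4\oplus C_3$, $C_3\oplus P_2\oplus C_5$ and $C_5\oplus C_5$, either contains a triangle or has a cut vertex (or both), and hence is not a subgraph of $C_{10}$, so $s_{\DD'}(H)=s_{\DD}(H)$ for all of them; thus whichever of the four conditions held for $\DD$ still holds for $\DD'$. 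Finally the $8$-deck of $\DD'$ differs from the $8$-deck of $\DD$ only in the $8$-edge subgraphs of the deleted copies of $C_{10}$, each of which is a subforest of a path and so has a vertex of degree one; by Propositions~\ref{prop:degreeone} and~\ref{prop:coeff2} such graphs contribute nothing to the coefficients $c^U_{\cdot,\ell}$ against a balanced kernel and are irrelevant against a non-balanced one, so this change does not affect the class and the $8$-deck of $\DD'$ is still of Class III. Lemma~\ref{lm:deck10-III} then gives that $\DD'$ is of Class III.

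It remains to transfer this to $\DD$. By Corollary~\ref{cor:deck8-III} we have $s_{\DD}(C_\ell)=0$ for $\ell\in\{2,4,6,8\}$, hence $c^U_{\DD,\ell}=c^U_{\DD',\ell}$ for $\ell=2,4,6,8$ while $c^U_{\DD,10}=c^U_{\DD',10}+s_{\DD}(C_{10})\,t(C_{10},U)$, and $s_{\DD}(C_{10})\,t(C_{10},U)>0$ for every non-zero kernel $U$ by Proposition~\ref{prop:cycle}. Fixing a non-zero kernel $U$: if all of $c^U_{\DD',2},\dots,c^U_{\DD',10}$ vanish, then $c^U_{\DD,2}=\dots=c^U_{\DD,8}=0$ and $c^U_{\DD,10}=s_{\DD}(C_{10})\,t(C_{10},U)>0$; if the first non-zero coefficient among $c^U_{\DD',2},\dots,c^U_{\DD',10}$ occurs at an index at most $8$, it is positive and is inherited unchanged by the $\DD$-sequence; and if that first non-zero coefficient is $c^U_{\DD',10}$, then $c^U_{\DD,10}=c^U_{\DD',10}+s_{\DD}(C_{10})\,t(C_{10},U)>0$ while the earlier $\DD$-coefficients are still zero. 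In every case not all of $c^U_{\DD,2},\dots,c^U_{\DD,10}$ vanish and the first non-zero one is positive, which is exactly the Class~I condition. The only genuinely non-routine point is the claim, inside the second paragraph, that deleting the copies of $C_{10}$ keeps the $8$-deck in Class III; this is the same implicit step used in the proof of Lemma~\ref{lm:deck8-I}, and it is what the degree-one argument above is designed to justify (in the degenerate situation where $\DD'$ is empty the statement is immediate, since then $c^U_{\DD,10}=s_{\DD}(C_{10})\,t(C_{10},U)>0$ for every non-zero $U$).
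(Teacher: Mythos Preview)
Your argument is correct and follows essentially the same approach as the paper: delete the copies of $C_{10}$ to form $\DD'$, invoke Lemma~\ref{lm:deck10-III} on $\DD'$, and then add back the strictly positive term $s_{\DD}(C_{10})\,t(C_{10},U)$ via Proposition~\ref{prop:cycle}. Your second paragraph supplies justification (that the $8$-deck of $\DD'$ stays in Class~III, and that the four conditions are preserved) which the paper simply asserts; this extra care is sound and useful. One small remark: the equality $c^U_{\DD,\ell}=c^U_{\DD',\ell}$ for $\ell\le 8$ that both you and the paper write down is only literally true for balanced $U$, since the deleted subgraphs of $C_{10}$ are forests and vanish only against balanced kernels; for non-balanced $U$ one instead uses $c^U_{\DD,2}>0$ directly, which you effectively already argued in paragraph two.
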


\begin{proof}
If the $8$-deck of $\DD$ is of Class I, then the $10$-deck $\DD$ is also of Class I.
We next assume that the $8$-deck of $\DD$ is of Class III.
Let $\DD'$ be the $10$-deck obtained from $\DD$ by removing all cycles of length ten.
Since the $10$-deck $\DD'$ satisfies the assumptions of Lemma~\ref{lm:deck10-III},
the $10$-deck $\DD'$ is of Class III.
It follows that for any non-zero kernel $U$,
all the coefficients $c^U_{\DD',2},\ldots,c^U_{\DD',10}$ are zero or
the first non-zero among these coefficients is positive.
Since $c^U_{\DD,\ell}=c^U_{\DD',\ell}$ for $\ell=2,4,6,8$ and $c^U_{\DD,10}=c^U_{\DD',10}+s_{\DD}(C_{10})t(C_{10},U)$,
we obtain using Proposition~\ref{prop:cycle} that
at least one of the coefficients $c^U_{\DD,2},\ldots,c^U_{\DD,10}$ is non-zero and
the first non-zero among these coefficients is positive.
This implies that the $10$-deck $\DD$ is of Class I.
\end{proof}

\begin{lemma}
\label{lm:deck10-II}
A $10$-deck $\DD$ is of Class II if either the $8$-deck of $\DD$ is of Class II or
the $8$-deck of $\DD$ is of Class III and at least one of the following holds:
\begin{itemize}
\item $s_{\DD}(C_3\oplus C_3)=0$ and $s_{\DD}(C_3\oplus C_7)>0$,
\item $s_{\DD}(C_3\oplus C_3)=s_{\DD}(C_3\cup C_3)=0$ and $s_{\DD}(C_3\cup C_7)>0$, or
\item $s_{\DD}(C_3\oplus C_3)=s_{\DD}(C_3\oplus P_2\oplus C_3)=0$ and $4s_{\DD}(C_3\oplus P_4\oplus C_3)s_{\DD}(C_5\oplus C_5)<s_{\DD}(C_3\oplus P_2\oplus C_5)^2$.
\end{itemize}
\end{lemma}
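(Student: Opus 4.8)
The plan is to imitate the proof of Lemma~\ref{lm:deck8-II}: it suffices to exhibit, in each case, a non-zero kernel $U$ for which not all of $c^U_{\DD,2},\dots,c^U_{\DD,10}$ vanish and the first non-zero one is negative. If the $8$-deck of $\DD$ is of Class~II, then so is $\DD$ by the inheritance property recorded in Subsection~\ref{subsec:prelim3}, so from now on I assume the $8$-deck of $\DD$ is of Class~III; then Corollary~\ref{cor:deck8-III} gives $s_{\DD}(C_4)=s_{\DD}(C_6)=s_{\DD}(C_8)=0$, and since every case of the lemma assumes $s_{\DD}(C_3\oplus C_3)=0$, the same corollary yields $s_{\DD}(C_3\oplus C_5)=0$. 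Every kernel $U$ I construct will be \emph{balanced}, so $c^U_{\DD,2}=0$ and, by Proposition~\ref{prop:degreeone}, only graphs of minimum degree two contribute to $c^U_{\DD,4},\dots,c^U_{\DD,10}$; moreover every non-principal graph of minimum degree two with at most ten edges contains $C_4$, $C_6$ or $C_8$ (it has a block which is neither an odd cycle nor an edge, and such a block — being an even cycle or a $2$-connected non-cycle, hence containing a theta-subgraph — contains $C_4$, $C_6$ or $C_8$, since $H$ itself is not $C_{10}$), so $s_{\DD}$ vanishes on all of them. Hence for balanced $U$ only the principal graphs of Figures~\ref{fig:princ4}--\ref{fig:princ10} matter, $c^U_{\DD,4}=0$, and in each case I will arrange $c^U_{\DD,6}=c^U_{\DD,8}=0$ and $c^U_{\DD,10}<0$.

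For the first case ($s_{\DD}(C_3\oplus C_7)>0$) I would apply Lemma~\ref{lm:core} with $k=9$, $m=1$, all $\gamma_\ell=0$, $\sigma_1=0$, $\tau_{1,3}=1$, $\tau_{1,7}=-1$, $\tau_{1,5}=\tau_{1,9}=0$, and any $\delta\in(0,1)$ with $s_{\DD}(C_{10})\delta\le 1/2$. By Lemma~\ref{lm:core-apply} all of $t(C_3,U)$, $t(C_5,U)$, $t(C_7,U)$, $t(C_3\oplus P_2\oplus C_3,U)$, $t(C_3\oplus P_4\oplus C_3,U)$, $t(C_3\oplus P_2\oplus C_5,U)$ and $t(C_5\oplus C_5,U)$ vanish, while $t(C_3\oplus C_7,U)=-1$ and $t(C_{10},U)\le\delta$; hence $c^U_{\DD,6}=c^U_{\DD,8}=0$ and $c^U_{\DD,10}\le s_{\DD}(C_{10})\delta-s_{\DD}(C_3\oplus C_7)\le-1/2$. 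In the second case ($s_{\DD}(C_3\cup C_3)=0$, $s_{\DD}(C_3\cup C_7)>0$), Corollary~\ref{cor:deck8-III} forces its case~(c), so also $s_{\DD}(C_3\cup C_3)=s_{\DD}(C_3\cup C_5)=0$; here I would apply Lemma~\ref{lm:core} with $k=9$, $m=0$, $\gamma_3=1$, $\gamma_7=-1$, $\gamma_5=\gamma_9=0$, and $\delta$ as above. Since $g_3$ lies in the kernel of $U$, the function $t^{C_3\oplus P_1}_U$ vanishes almost everywhere, so $t(C_3\oplus P_2\oplus C_3,U)=0$; all principal $8$-edge densities vanish, and among the principal $10$-edge graphs only $C_{10}$ and $C_3\cup C_7$ have non-zero density, with $t(C_3\cup C_7,U)=t(C_3,U)t(C_7,U)=-1$. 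Again $c^U_{\DD,6}=c^U_{\DD,8}=0$ and $c^U_{\DD,10}\le s_{\DD}(C_{10})\delta-s_{\DD}(C_3\cup C_7)\le-1/2$.

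The third case is the substantive one and is where the effort lies. Here $s_{\DD}(C_3\oplus P_2\oplus C_3)=0$ and, writing
\[M=\begin{pmatrix} s_{\DD}(C_3\oplus P_4\oplus C_3) & \tfrac12 s_{\DD}(C_3\oplus P_2\oplus C_5)\\[2pt] \tfrac12 s_{\DD}(C_3\oplus P_2\oplus C_5) & s_{\DD}(C_5\oplus C_5)\end{pmatrix},\]
the hypothesis $4s_{\DD}(C_3\oplus P_4\oplus C_3)s_{\DD}(C_5\oplus C_5)<s_{\DD}(C_3\oplus P_2\oplus C_5)^2$ says $\det M<0$, so (the diagonal of $M$ being non-negative) $M$ has an eigenvector $v=(v_1,v_2)$ with $\mu:=v^TMv<0$ and $v_1,v_2\neq0$. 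I would first fix $v$ and $\mu$, then choose $\delta\in(0,1)$ with $s_{\DD}(C_{10})\delta\le-\mu/2$, set $\sigma_1=(\delta/2)^{1/10}>0$, and apply Lemma~\ref{lm:core} with $k=9$, $m=1$, all $\gamma_\ell=0$, $\tau_{1,7}=\tau_{1,9}=0$, $\tau_{1,5}=v_2$ and $\tau_{1,3}=v_1/\sigma_1^2$. Then $t(C_3,U)=t(C_5,U)=0$, so $c^U_{\DD,6}=c^U_{\DD,8}=0$ (using also the hypotheses $s_{\DD}(C_3\oplus C_3)=s_{\DD}(C_3\oplus P_2\oplus C_3)=0$ and $s_{\DD}(C_3\oplus C_5)=0$), while $t(C_3\oplus C_7,U)=0$ and, by Lemma~\ref{lm:core-apply}, $t(C_3\oplus P_4\oplus C_3,U)=v_1^2$, $t(C_3\oplus P_2\oplus C_5,U)=v_1v_2$, $t(C_5\oplus C_5,U)=v_2^2$; therefore $c^U_{\DD,10}=s_{\DD}(C_{10})t(C_{10},U)+v^TMv\le-\mu/2+\mu<0$.

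In each case the kernel $U$ is non-zero (one of the chosen values $\tau_{1,3}$, $\gamma_3$, $\sigma_1$ is non-zero) and the first non-zero coefficient $c^U_{\DD,10}$ is negative, so $\DD$ is of Class~II. The one genuinely delicate part is the bookkeeping: checking that, with these parameter choices, \emph{every} principal graph of size at most ten contributes either $0$ or exactly the intended term to $c^U_{\DD,\ell}$, and that in the third case $v$, $\delta$ and $\sigma_1$ can be chosen in an order compatible with the hypothesis $\sigma_1^{k+1}\le\delta/2$ of Lemma~\ref{lm:core}.
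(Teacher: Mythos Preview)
Your proof is correct and follows essentially the same approach as the paper's: the three kernel constructions via Lemma~\ref{lm:core} use the same parameter choices in cases~1 and~2, and in case~3 you differ only cosmetically (you take $\sigma_1=(\delta/2)^{1/10}$ and leave the negative-direction vector unnormalized, whereas the paper takes $\sigma_1=\delta/2$ and scales $(z_3,z_5)$ so the quadratic form equals $-1$); both satisfy the constraint $\sigma_1^{10}\le\delta/2$ and lead to the same conclusion $c^U_{\DD,10}<0$. Your extra paragraph justifying why non-principal minimum-degree-two graphs with at most ten edges contain $C_4$, $C_6$ or $C_8$ is correct (the theta-subgraph argument works since any theta with $a+b+c\le 10$ has its shortest even cycle of length at most $8$) and makes explicit what the paper leaves to the discussion at the end of Subsection~\ref{subsec:prelim3}.
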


\begin{proof}
Fix a $10$-deck $\DD$.
If the $8$-deck of $\DD$ is of Class I or II, then there is nothing to prove.
Hence, we assume that the $8$-deck of $\DD$ is of Class III and
analyze each of the three cases listed in the statement of the lemma separately.
Note that $s_{\DD}(C_\ell)=0$ for $\ell=2,4,6,8$.

In the first case, we apply Lemma~\ref{lm:core} with $k=9$, $m=1$, any $\delta\in (0,1)$ such that $s_{\DD}(C_{10})\delta\le 1/2$,
$\gamma_3=\gamma_5=\gamma_7=\gamma_9=0$, $\sigma_1=0$, $\tau_{1,3}=1$, $\tau_{1,5}=\tau_{1,9}=0$ and $\tau_{1,7}=-1$
to get a balanced kernel $U$ with the properties given in the statement of Lemma~\ref{lm:core}.
Note that $t(H,U)=0$ for all principal graphs with at most $10$ edges
with the exception of $H$ being an even cycle, $C_3\oplus C_3$ or $C_3\oplus C_7$.
It follows that $c^U_{\DD,\ell}=0$ for $\ell=2,4,6,8$ and
\[c^U_{\DD,10}=s_{\DD}(C_{10})t(C_{10},U)+s_{\DD}(C_3\oplus C_7)t(C_3\oplus C_7,U)\le -1/2.\]
Hence, the $10$-deck $\DD$ is of Class II.

In the second case, we apply Lemma~\ref{lm:core} with $k=9$, $m=0$, any $\delta\in (0,1)$ such that $s_{\DD}(C_{10})\delta\le 1/2$,
$\gamma_3=1$, $\gamma_5=\gamma_9=0$ and $\gamma_7=-1$ to get a balanced kernel $U$.
Note that $t(H,U)=0$ for all principal graphs with at most $10$ edges
with the exception of $H$ being an even cycle, $C_3\cup C_3$, $C_3\oplus C_3$ or $C_3\cup C_7$.
It follows that $c^U_{\DD,\ell}=0$ for $\ell=2,4,6,8$ and
\[c^U_{\DD,10}=s_{\DD}(C_{10})t(C_{10},U)+s_{\DD}(C_3\cup C_7)t(C_3\cup C_7,U)\le -1/2.\]
Hence, the $10$-deck $\DD$ is of Class II in this case, too.

It remains to consider the final case given in the statement of the lemma.
Since $4s_{\DD}(C_3\oplus P_4\oplus C_3)s_{\DD}(C_5\oplus C_5)-s_{\DD}(C_3\oplus P_2\oplus C_5)^2$ is negative,
the matrix
\[\begin{pmatrix}
  s_{\DD}(C_3\oplus P_4\oplus C_3) & \frac{s_{\DD}(C_3\oplus P_2\oplus C_5)}{2} \\
  \frac{s_{\DD}(C_3\oplus P_2\oplus C_5)}{2} & s_{\DD}(C_5\oplus C_5)
  \end{pmatrix}\]
has a negative eigenvalue, i.e., there exists a vector $(z_3,z_5)\in\RR^2$ such that
\[\begin{pmatrix} z_3 \\ z_5 \end{pmatrix}^T
  \begin{pmatrix}
  s_{\DD}(C_3\oplus P_4\oplus C_3) & \frac{s_{\DD}(C_3\oplus P_2\oplus C_5)}{2} \\
  \frac{s_{\DD}(C_3\oplus P_2\oplus C_5)}{2} & s_{\DD}(C_5\oplus C_5)
  \end{pmatrix}
  \begin{pmatrix} z_3 \\ z_5 \end{pmatrix}=-1.\]
We next apply Lemma~\ref{lm:core} with $k=9$, $m=1$, any $\delta\in (0,1)$ such that $s_{\DD}(C_{10})\delta\le 1/2$,
$\gamma_3=\gamma_5=\gamma_7=\gamma_9=0$, $\sigma_1=\delta/2$, $\tau_{1,3}=4z_3/\delta^2$, $\tau_{1,5}=z_5$ and $\tau_{1,7}=\tau_{1,9}=0$
to get a balanced kernel $U$ with the properties given in the statement of Lemma~\ref{lm:core};
let $f_1$ be the eigenfunction from the statement of Lemma~\ref{lm:core}.
Note that $t(C_3,U)=t(C_5,U)=t(C_7,U)=0$.
Since $s_{\DD}(C_3\oplus C_3)=0$, it holds that $s_{\DD}(C_3\oplus C_5)=0$ by Corollary~\ref{cor:deck8-III}.
It follows that $c^U_{\DD,\ell}=0$ for $\ell=2,4,6,8$.
For every $10$-edge principal graph $H$, it holds that $s_{\DD}(H)=0$ or $t(H,U)=0$
unless $H$ is $C_3\oplus P_4\oplus C_3$, $C_3\oplus P_2\oplus C_5$ or $C_5\oplus C_5$ (here,
we use that $s_{\DD}(H)=0$ for every $h$ containing $C_3\oplus C_3$ as $s_{\DD}(C_3\oplus C_3)=0$).
It follows that
\begin{align*}
 c^U_{\DD,10} =& s_{\DD}(C_{10})t(C_{10},U)+s_{\DD}(C_3\oplus P_4\oplus C_3)t(C_3\oplus P_4\oplus C_3,U)+\\
               & s_{\DD}(C_3\oplus P_2\oplus C_5)t(C_3\oplus P_2\oplus C_5,U)+s_{\DD}(C_5\oplus C_5)t(C_5\oplus C_5,U)
\end{align*}
which can be rewritten as
\begin{align*}
 c^U_{\DD,10} =& s_{\DD}(C_{10})t(C_{10},U)+\\
              & \int_{[0,1]}
                \begin{pmatrix}t_U^{C_3\oplus P_2}(x) \\ t_U^{C_5}(x) \end{pmatrix}^T
                \begin{pmatrix}
                s_{\DD}(C_3\oplus P_4\oplus C_3) & \frac{s_{\DD}(C_3\oplus P_2\oplus C_5)}{2} \\
                \frac{s_{\DD}(C_3\oplus P_2\oplus C_5)}{2} & s_{\DD}(C_5\oplus C_5)
                \end{pmatrix}
                \begin{pmatrix}t_U^{C_3\oplus P_2}(x) \\ t_U^{C_5}(x) \end{pmatrix}\dd x.
\end{align*}
Since $t_U^{C_3\oplus P_2}(x)=\sigma_1^2 \tau_{1,3} f_1(x)=z_3 f_1(x)$ 
and $t_U^{C_5}(x)=\tau_{1,5} f_1(x) = z_5 f_1(x)$,
we obtain that
\begin{align*}
 c^U_{\DD,10} =& s_{\DD}(C_{10})t(C_{10},U)+\\
              & \int_{[0,1]}
                \begin{pmatrix} z_3 \\ z_5 \end{pmatrix}^T
                \begin{pmatrix}
                s_{\DD}(C_3\oplus P_4\oplus C_3) & \frac{s_{\DD}(C_3\oplus P_2\oplus C_5)}{2} \\
                \frac{s_{\DD}(C_3\oplus P_2\oplus C_5)}{2} & s_{\DD}(C_5\oplus C_5)
                \end{pmatrix}
		\begin{pmatrix} z_3 \\ z_5 \end{pmatrix} f_1(x)^2 \dd x.
\end{align*}
Since the integral is equal to $-1$ as the $L_2$-norm of $f_1$ is one,
it follows that $c^U_{\DD,10}\le -1/2$.
We conclude that the $10$-deck $\DD$ is of Class II.
\end{proof}

\begin{figure}
\begin{center}
\epsfbox{dcommon-13.mps}
\end{center}
\caption{The classification of $10$-decks $\DD$ with $s_{\DD}(P_2)>0$ whose $8$-decks is of Class III;
         we omit the subscript $\DD$ in the diagram.}
\label{fig:deck10}
\end{figure}

We are now ready to state the main theorem of this section.

\begin{theorem}
\label{thm:deck10}
Let $\DD$ be a $10$-deck with $s_{\DD}(P_2)>0$.
If the $8$-deck of $\DD$ is of Class I or of Class II, then $\DD$ is of Class I or of Class II, respectively.
Otherwise, the deck $\DD$ is of Class I, Class II or Class III as determined
in the diagram in Figure~\ref{fig:deck10}.
\end{theorem}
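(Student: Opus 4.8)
The plan is to deduce the theorem from Lemmas~\ref{lm:deck10-III}, \ref{lm:deck10-I} and~\ref{lm:deck10-II}, exactly as Theorem~\ref{thm:deck8} was obtained from Lemmas~\ref{lm:deck8-III}--\ref{lm:deck8-II}. The first two assertions are immediate: the coefficients $c^{\DD}_{U,2},\ldots,c^{\DD}_{U,8}$ are unchanged when we pass from the $8$-deck of $\DD$ to the $10$-deck $\DD$, so a kernel witnessing Class~I (respectively, a kernel witnessing Class~II) for the $8$-deck does the same for $\DD$; this is the monotonicity observation recorded at the end of Subsection~\ref{subsec:prelim3}.

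So assume the $8$-deck of $\DD$ is of Class~III. By Corollary~\ref{cor:deck8-III} we then have $s_{\DD}(C_4)=s_{\DD}(C_6)=s_{\DD}(C_8)=0$ and exactly one of the three alternatives listed there holds; in particular this pins down which of $s_{\DD}(C_3\oplus C_3)$, $s_{\DD}(C_3\oplus C_5)$, $s_{\DD}(C_3\cup C_3)$, $s_{\DD}(C_3\cup C_5)$ are forced to vanish. Using this, I would check that the three lemmas between them cover every possibility for the relevant statistics of a $10$-deck, namely $s_{\DD}(C_{10})$, $s_{\DD}(C_3\oplus C_3)$, $s_{\DD}(C_3\cup C_3)$, $s_{\DD}(C_3\oplus C_7)$, $s_{\DD}(C_3\cup C_7)$, $s_{\DD}(C_3\oplus P_2\oplus C_3)$, and the sign of $4s_{\DD}(C_3\oplus P_4\oplus C_3)s_{\DD}(C_5\oplus C_5)-s_{\DD}(C_3\oplus P_2\oplus C_5)^2$; these are precisely the branchings drawn in Figure~\ref{fig:deck10}. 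Concretely, I would walk the tree in Figure~\ref{fig:deck10} and verify that every leaf labelled Class~I matches the hypotheses of Lemma~\ref{lm:deck10-I}, every leaf labelled Class~II matches those of Lemma~\ref{lm:deck10-II}, and every leaf labelled Class~III matches those of Lemma~\ref{lm:deck10-III}; since any graph $H$ with $s_{\DD}(H)>0$ that contains $C_3\oplus C_3$ forces $s_{\DD}(C_3\oplus C_3)>0$, the various larger principal graphs containing $C_3\oplus C_3$ need not appear as separate branches.

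The routine but genuinely delicate part is the bookkeeping: one must ensure that the hypotheses of the three lemmas are jointly exhaustive and pairwise incompatible along each branch, so that the diagram assigns exactly one class to every deck. The constraints coming from Corollary~\ref{cor:deck8-III} are exactly what makes this enumeration finite and clean, and I do not expect any analytic input beyond Lemmas~\ref{lm:deck10-III}--\ref{lm:deck10-II} and Corollary~\ref{cor:deck8-III}. The one subtlety worth flagging is the branch in which Class~I and Class~II are separated by whether the $2\times 2$ matrix with entries $s_{\DD}(C_3\oplus P_4\oplus C_3)$, $s_{\DD}(C_3\oplus P_2\oplus C_5)/2$, $s_{\DD}(C_5\oplus C_5)$ is positive semidefinite or has a negative eigenvalue: the boundary case of equality $4s_{\DD}(C_3\oplus P_4\oplus C_3)s_{\DD}(C_5\oplus C_5)=s_{\DD}(C_3\oplus P_2\oplus C_5)^2$ is handled by the ``$\ge$'' in Lemma~\ref{lm:deck10-III} (Class~III), not by Lemma~\ref{lm:deck10-II}, and I would double-check that Figure~\ref{fig:deck10} is drawn consistently with this convention.
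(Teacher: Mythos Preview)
Your proposal is correct and follows essentially the same approach as the paper: the paper's proof also consists of inspecting the diagram in Figure~\ref{fig:deck10} and verifying that each leaf labelled Class~I, II, or~III matches the hypotheses of Lemma~\ref{lm:deck10-I}, \ref{lm:deck10-II}, or~\ref{lm:deck10-III}, respectively. One minor quibble: the coefficients $c^{\DD}_{U,\ell}$ are not literally unchanged when passing from the $8$-deck to the $10$-deck---they differ by a positive multiplicative constant---but this does not affect your argument since only their signs matter, and you correctly invoke the monotonicity remark at the end of Subsection~\ref{subsec:prelim3}.
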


\begin{proof}
The proof follows by inspecting the diagram in Figure~\ref{fig:deck10} and verifying that
every path leading to the label Class I corresponds to the assumptions of Lemma~\ref{lm:deck10-I},
every path leading to the label Class II corresponds to the assumptions of Lemma~\ref{lm:deck10-II}, and
every path leading to the label Class III corresponds to the assumptions of Lemma~\ref{lm:deck10-III}.
\end{proof}

Theorem~\ref{thm:deck10} yields the following corollary.

\begin{corollary}
\label{cor:deck10-III-classify}
Every $10$-deck $\DD$ of Class III satisfies that $s_{\DD}(P_2)>0$, $s_{\DD}(C_4)=s_{\DD}(C_6)=s_{\DD}(C_8)=s_{\DD}(C_{10})=0$ and
exactly one of the following:
\begin{itemize}
\item $s_{\DD}(C_3\oplus C_3)>0$,
\item $s_{\DD}(C_3\oplus C_3)=s_{\DD}(C_3\oplus C_5)=s_{\DD}(C_3\oplus C_7)=s_{\DD}(C_3\cup C_3)=s_{\DD}(C_3\cup C_5)=s_{\DD}(C_3\cup C_7)=0$,
\item $s_{\DD}(C_3\oplus C_3)=s_{\DD}(C_3\oplus C_5)=s_{\DD}(C_3\oplus C_7)=0$, $s_{\DD}(C_3\cup C_3)>0$ and $s_{\DD}(C_3\oplus P_2\oplus C_3)>0$, or
\item $s_{\DD}(C_3\oplus C_3)=s_{\DD}(C_3\oplus C_5)=s_{\DD}(C_3\oplus C_7)=s_{\DD}(C_3\oplus P_2\oplus C_3)=0$, $s_{\DD}(C_3\cup C_3)>0$ and $4s_{\DD}(C_3\oplus P_4\oplus C_3)s_{\DD}(C_5\oplus C_5)\ge s_{\DD}(C_3\oplus P_2\oplus C_5)^2$.
\end{itemize}
\end{corollary}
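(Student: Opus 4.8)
The plan is to deduce the corollary from Theorem~\ref{thm:deck10} (equivalently, from Lemmas~\ref{lm:deck10-III}, \ref{lm:deck10-I} and~\ref{lm:deck10-II}) together with Corollary~\ref{cor:deck8-III}, by tracing the classification of $10$-decks along the branches of Figure~\ref{fig:deck10} and reading off which leaves carry the label Class~III. Fix a $10$-deck $\DD$ of Class~III; as in the hypotheses of Theorems~\ref{thm:deck8} and~\ref{thm:deck10} we work under $s_{\DD}(P_2)>0$.

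First I would pin down the coarse structure. A deck some sub-deck of which is of Class~I (resp.\ Class~II) is itself of Class~I (resp.\ Class~II); moreover the three classes are mutually exclusive, and by Theorem~\ref{thm:deck8} they jointly exhaust all $8$-decks $\DD'$ with $s_{\DD'}(P_2)>0$. Hence the $8$-deck of $\DD$ cannot be of Class~I or of Class~II, so it is of Class~III, and Corollary~\ref{cor:deck8-III} applies to it: we obtain $s_{\DD}(C_4)=s_{\DD}(C_6)=s_{\DD}(C_8)=0$ together with exactly one of
\begin{itemize}
\item[(a)] $s_{\DD}(C_3\oplus C_3)>0$;
\item[(b)] $s_{\DD}(C_3\oplus C_3)=s_{\DD}(C_3\oplus C_5)=0$ and $s_{\DD}(C_3\cup C_3)>0$;
\item[(c)] $s_{\DD}(C_3\oplus C_3)=s_{\DD}(C_3\oplus C_5)=s_{\DD}(C_3\cup C_3)=s_{\DD}(C_3\cup C_5)=0$.
\end{itemize}

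The core of the proof is then a finite case analysis refining (a), (b), (c) according to the multiplicities of $C_3\oplus C_7$, $C_3\cup C_7$ and $C_3\oplus P_2\oplus C_3$ and the sign of $4s_{\DD}(C_3\oplus P_4\oplus C_3)s_{\DD}(C_5\oplus C_5)-s_{\DD}(C_3\oplus P_2\oplus C_5)^2$, following the branches of Figure~\ref{fig:deck10}. In each branch two observations do the work. Whenever the hypotheses of Lemma~\ref{lm:deck10-II} hold, $\DD$ would be of Class~II, contradicting our assumption; this eliminates, in case~(b), the subcase $s_{\DD}(C_3\oplus C_7)>0$ and the subcase where $s_{\DD}(C_3\oplus P_2\oplus C_3)=0$ and $4s_{\DD}(C_3\oplus P_4\oplus C_3)s_{\DD}(C_5\oplus C_5)<s_{\DD}(C_3\oplus P_2\oplus C_5)^2$, and in case~(c) the subcases $s_{\DD}(C_3\oplus C_7)>0$ and $s_{\DD}(C_3\cup C_7)>0$. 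In each surviving branch, $s_{\DD}(C_{10})>0$ would make the hypotheses of Lemma~\ref{lm:deck10-I} hold---here one uses Corollary~\ref{cor:deck8-III} to supply the vanishing of the auxiliary $C_3\oplus C_j$ multiplicities---forcing $\DD$ to be of Class~I, again a contradiction; hence $s_{\DD}(C_{10})=0$. Assembling the surviving branches with the relations inherited from Corollary~\ref{cor:deck8-III} (for instance $s_{\DD}(C_3\oplus C_3)=0$ forces $s_{\DD}(C_3\oplus C_5)=0$, and $s_{\DD}(C_3\oplus C_3)=s_{\DD}(C_3\cup C_3)=0$ forces $s_{\DD}(C_3\oplus C_5)=s_{\DD}(C_3\cup C_5)=0$) leaves exactly the four listed configurations: case~(a) yields the first bullet, case~(c) yields the second, and case~(b) splits, according to whether $s_{\DD}(C_3\oplus P_2\oplus C_3)>0$ or $=0$, into the third and fourth bullets. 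These four are separated in turn by whether $s_{\DD}(C_3\oplus C_3)>0$, then by whether $s_{\DD}(C_3\cup C_3)>0$, then by whether $s_{\DD}(C_3\oplus P_2\oplus C_3)>0$, so exactly one of them holds.

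The main difficulty is bookkeeping rather than any individual step: one must check that the branches of Figure~\ref{fig:deck10} are exhaustive (that is, that Theorem~\ref{thm:deck10} leaves no configuration unclassified and that the trichotomy of classes is complete for $10$-decks with $s_{\DD}(P_2)>0$), and that after importing the identities forced by Corollary~\ref{cor:deck8-III} the Class~III leaves collapse, with the redundant hypotheses dropped, to precisely the four stated possibilities.
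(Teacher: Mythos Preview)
Your proposal is correct and follows precisely the approach the paper has in mind: the paper simply states that the corollary is yielded by Theorem~\ref{thm:deck10}, and your argument spells out the case analysis along the branches of Figure~\ref{fig:deck10}, using Corollary~\ref{cor:deck8-III} to supply the vanishing of $s_{\DD}(C_4),s_{\DD}(C_6),s_{\DD}(C_8)$ and the trichotomy (a)--(c), then Lemmas~\ref{lm:deck10-II} and~\ref{lm:deck10-I} to prune the Class~II subcases and force $s_{\DD}(C_{10})=0$. The only point to note is that the paper treats $s_{\DD}(P_2)>0$ as a standing hypothesis throughout the classification, so listing it among the conclusions of the corollary is a mild notational convenience rather than a claim requiring separate proof; your handling of this is appropriate.
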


\section{Decks with twelve edges}
\label{sec:deck12}
In this section,
we analyze $12$-decks such that their $10$-decks are of Class III (Lemmas~\ref{lm:deck12-sC3C3}--\ref{lm:deck12-det}) and
prove our main result, Theorem~\ref{thm:deck12};
the statement of the theorem is illustrated in Figure~\ref{fig:deck12}.
The first three lemmas cover the first three cases described in Corollary~\ref{cor:deck10-III-classify} respectively.

\begin{figure}
\begin{center}
\epsfbox{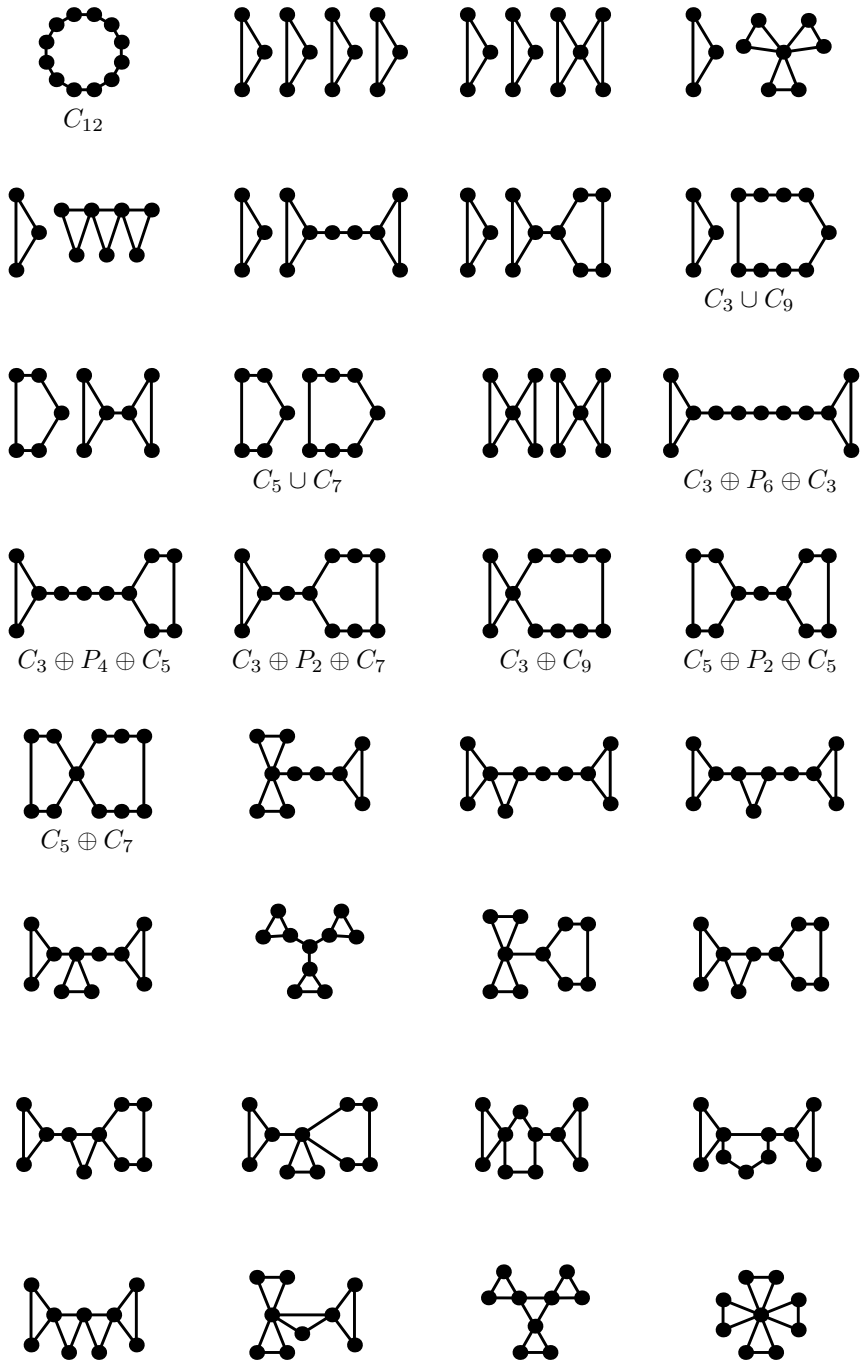}
\end{center}
\caption{Principal $12$-edge graphs.}
\label{fig:princ12}
\end{figure}

\begin{lemma}
\label{lm:deck12-sC3C3}
Let $\DD$ be a $12$-deck such that its $10$-deck is of Class III, $s_{\DD}(P_2)>0$ and $s_{\DD}(C_3\oplus C_3)>0$.
If
\begin{itemize}
\item $s_{\DD}(C_5\oplus C_5)=0$ and $s_{\DD}(C_5\oplus C_7)>0$, or
\item $s_{\DD}(C_5\oplus C_5)=s_{\DD}(C_5\cup C_5)=0$ and $s_{\DD}(C_5\cup C_7)>0$,
\end{itemize}
then $\DD$ is of Class II.
Otherwise, $\DD$ is of Class I if $s_{\DD}(C_{12})>0$ and of Class III if $s_{\DD}(C_{12})=0$.
\end{lemma}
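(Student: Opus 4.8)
The plan is to follow the template of the proofs of Lemmas~\ref{lm:deck8-III}--\ref{lm:deck10-II}. Fix a $12$-deck $\DD$ as in the statement. Since its $10$-deck is of Class~III and $s_{\DD}(C_3\oplus C_3)>0$, Corollary~\ref{cor:deck10-III-classify} gives $s_{\DD}(C_\ell)=0$ for $\ell\in\{4,6,8,10\}$. Combined with Proposition~\ref{prop:degreeone}, this implies that for a balanced kernel $U$ the coefficient $c^U_{\DD,\ell}$ with $\ell\le 12$ is a sum of $s_{\DD}(H)\,t(H,U)$ over principal $\ell$-edge graphs $H$ only: a non-principal graph with at most $12$ edges either has a degree-one vertex, so its density in a balanced kernel is zero, or it contains an even cycle of length at most $10$ and hence has no copy in $\DD$ (the only graph with at most $12$ edges containing $C_{12}$ is $C_{12}$ itself, which is principal). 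Moreover, if $t^{C_3}_U\equiv 0$ almost everywhere, then $t(H,U)=0$ for every principal graph $H$ having a $C_3$ block: in a principal graph with at least two blocks every leaf block is an odd cycle (a leaf block equal to an edge would create a degree-one vertex), so if one of them is a $C_3$ the density of $H$ factors through $t^{C_3}_U$ via \eqref{eq:glue}. Consequently, for such $U$ the only principal graphs with at most $12$ edges of possibly nonzero density are even cycles together with $C_5\oplus C_5$, $C_5\cup C_5$, $C_5\oplus C_7$, $C_5\cup C_7$ and $C_5\oplus P_2\oplus C_5$ (all remaining principal $10$- and $12$-edge graphs are shown in Figures~\ref{fig:princ10} and~\ref{fig:princ12} and each of them has a $C_3$ block).

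For the two Class~II cases I would apply Lemma~\ref{lm:core} with $k=11$ and some $\delta\in(0,1)$ satisfying $s_{\DD}(C_{12})\delta\le 1/2$. If $s_{\DD}(C_5\oplus C_5)=0$ and $s_{\DD}(C_5\oplus C_7)>0$, take $m=1$, every $\gamma_\ell=0$, $\sigma_1=0$, $\tau_{1,5}=1$, $\tau_{1,7}=-1$, and every remaining $\tau_{1,\ell}=0$; then Lemma~\ref{lm:core-apply} gives $t(C_\ell,U)=0$ for all odd $\ell$, $t^{C_3}_U\equiv 0$, $t(C_5\oplus C_7,U)=\tau_{1,5}\tau_{1,7}=-1$, $t(C_5\oplus P_2\oplus C_5,U)=\sigma_1^2\tau_{1,5}^2=0$, $t(C_5\cup C_7,U)=0$ and $t(C_{12},U)\le\delta$, so $c^U_{\DD,\ell}=0$ for $\ell\le 10$ and $c^U_{\DD,12}\le s_{\DD}(C_{12})\delta-s_{\DD}(C_5\oplus C_7)\le -1/2$. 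If $s_{\DD}(C_5\oplus C_5)=s_{\DD}(C_5\cup C_5)=0$ and $s_{\DD}(C_5\cup C_7)>0$, take $m=0$, $\gamma_5=1$, $\gamma_7=-1$ and every remaining $\gamma_\ell=0$; now $t^{C_3}_U\equiv 0$, and since the functions $g_\ell$ have pairwise disjoint supports and lie in the kernel of $U$ we get $t(C_5\oplus C_7,U)=t(C_5\oplus P_2\oplus C_5,U)=0$ while $t(C_5\cup C_7,U)=\gamma_5\gamma_7=-1$, so once more $c^U_{\DD,12}\le -1/2$. Thus $\DD$ is of Class~II in both cases.

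In the remaining (``otherwise'') case I would prove that for every nonzero kernel $U$ either all of $c^U_{\DD,2},\ldots,c^U_{\DD,12}$ vanish or the first nonzero among them is positive. If $U$ is not balanced then $c^U_{\DD,2}>0$ by Proposition~\ref{prop:coeff2}, so assume $U$ balanced. Then $c^U_{\DD,2}=c^U_{\DD,4}=0$ and $c^U_{\DD,6}=s_{\DD}(C_3\oplus C_3)\int_{[0,1]}t^{C_3}_U(x)^2\dd x+s_{\DD}(C_3\cup C_3)t(C_3,U)^2\ge 0$, which is positive unless $t^{C_3}_U\equiv 0$. If $t^{C_3}_U\equiv 0$, then $c^U_{\DD,6}=c^U_{\DD,8}=0$ and $c^U_{\DD,10}=s_{\DD}(C_5\oplus C_5)\int_{[0,1]}t^{C_5}_U(x)^2\dd x+s_{\DD}(C_5\cup C_5)t(C_5,U)^2\ge 0$. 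In the ``otherwise'' case $s_{\DD}(C_5\oplus C_5)=0$ forces $s_{\DD}(C_5\oplus C_7)=0$, and if in addition $s_{\DD}(C_5\cup C_5)=0$ then also $s_{\DD}(C_5\cup C_7)=0$; using this, a short split according to whether $c^U_{\DD,10}>0$ and according to which of $s_{\DD}(C_5\oplus C_5)$ and $s_{\DD}(C_5\cup C_5)$ are positive shows that whenever $c^U_{\DD,2}=\ldots=c^U_{\DD,10}=0$ we have
\[c^U_{\DD,12}=s_{\DD}(C_{12})t(C_{12},U)+s_{\DD}(C_5\oplus P_2\oplus C_5)\int_{[0,1]}(Ut^{C_5}_U)(x)^2\dd x\ge 0,\]
and by Proposition~\ref{prop:cycle} the first summand is strictly positive exactly when $s_{\DD}(C_{12})>0$. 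Hence the first nonzero coefficient is positive whenever it exists, so $\DD$ is never of Class~II; if $s_{\DD}(C_{12})>0$ the displayed identity shows that no nonzero $U$ annihilates all the coefficients, so $\DD$ is of Class~I, whereas if $s_{\DD}(C_{12})=0$ then $s_{\DD}(C_\ell)=0$ for all even $\ell\le 12$, so Lemma~\ref{lm:core-III} produces a nonzero $U$ with $c^U_{\DD,2}=\ldots=c^U_{\DD,12}=0$ and $\DD$ is of Class~III.

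The step I expect to be the main obstacle is the bookkeeping in the last two paragraphs: one has to be certain that the enumeration of principal $10$- and $12$-edge graphs that can have nonzero density in a balanced kernel with $t^{C_3}_U\equiv 0$ is exhaustive (this is where the leaf-block observation together with Figures~\ref{fig:princ10} and~\ref{fig:princ12} enter), and that the implications among the frequencies $s_{\DD}(\cdot)$ delimiting the ``otherwise'' case are applied in the right subcases; the nonnegativity and sum-of-squares estimates themselves are routine given Lemmas~\ref{lm:core} and~\ref{lm:core-apply}.
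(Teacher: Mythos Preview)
Your proposal is correct and follows essentially the same approach as the paper's proof: the same applications of Lemma~\ref{lm:core} (with identical parameters) for the two Class~II bullets, and the same case analysis on $t^{C_3}_U$ and on the pair $s_{\DD}(C_5\oplus C_5),\,s_{\DD}(C_5\cup C_5)$ for the ``otherwise'' part. The only cosmetic difference is that you handle Class~I directly via the displayed inequality for $c^U_{\DD,12}$, whereas the paper (as in Lemmas~\ref{lm:deck8-I} and~\ref{lm:deck10-I}) first reduces to the Class~III case with $s_{\DD}(C_{12})=0$ by stripping the $C_{12}$'s; both are equivalent here.
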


\begin{proof}
Let $\DD$ be a $12$-deck $\DD$ such that its $10$-deck is of Class III, $s_{\DD}(P_2)>0$ and $s_{\DD}(C_3\oplus C_3)>0$.
Note that $s_{\DD}(C_{\ell})=0$ for $\ell=4,6,8,10$ by Corollary~\ref{cor:deck10-III-classify}.
We first show that if $\DD$ satisfies one of the two conditions in the statement of the lemma, then $\DD$ is of Class II.
The first case to consider is when $s_{\DD}(C_5\oplus C_5)=0$ and $s_{\DD}(C_5\oplus C_7)>0$.
We apply Lemma~\ref{lm:core} with $k=11$, $m=1$, any $\delta\in (0,1)$ such that $s_{\DD}(C_{12})\delta\le 1/2$,
$\gamma_3=\gamma_5=\gamma_7=\gamma_9=\gamma_{11}=0$, $\sigma_1=0$, $\tau_{1,3}=\tau_{1,9}=\tau_{1,11}=0$, $\tau_{1,5}=1$ and $\tau_{1,7}=-1$
to get a non-zero kernel $U$ with the properties given in Lemmas~\ref{lm:core} and~\ref{lm:core-apply}.
It holds that $t(H,U)=0$ for all principal subgraphs with at most twelve edges with the exception 
of $H$ being an even cycle, $C_5\oplus C_5$ or $C_5\oplus C_7$.
It follows that $c^U_{\DD,2}=\cdots=c^U_{\DD,10}=0$ and
\[c^U_{\DD,12}=s_{\DD}(C_{12})t(C_{12},U)+s_{\DD}(C_5\oplus C_7)t(C_5\oplus C_7,U)\le -1/2.\]
Hence, the deck $\DD$ is of Class II.

The second case is when $s_{\DD}(C_5\oplus C_5)=s_{\DD}(C_5\cup C_5)=0$ and $s_{\DD}(C_5\cup C_7)>0$.
We apply Lemma~\ref{lm:core} with $k=11$, $m=0$, any $\delta\in (0,1)$ such that $s_{\DD}(C_{12})\delta\le 1/2$,
$\gamma_3=\gamma_9=\gamma_{11}=0$, $\gamma_5=1$ and $\gamma_7=-1$
to get a non-zero kernel $U$ with the properties given in Lemmas~\ref{lm:core} and~\ref{lm:core-apply}.
It holds that $t(H,U)=0$ for all principal subgraphs with at most twelve edges with the exception
of $H$ being an even cycle, $C_5\oplus C_5$, $C_5\cup C_5$ or $C_5\cup C_7$.
Hence, it holds that $c^U_{\DD,2}=\cdots=c^U_{\DD,10}=0$ and
\[c^U_{\DD,12}=s_{\DD}(C_{12})t(C_{12},U)+s_{\DD}(C_5\cup C_7)t(C_5\cup C_7,U)\le -1/2,\]
which yields that the deck $\DD$ is of Class II.

We now prove that if the deck $\DD$ does not satisfy any of the two conditions in the statement of the lemma,
then it is of Class I or Class III.
Following the line of arguments presented in the proofs of Lemmas~\ref{lm:deck8-I} and~\ref{lm:deck10-I},
it is enough to establish that the deck $\DD$ is of Class III when $s_{\DD}(C_{12})=0$;
note that if $s_{\DD}(C_{12})=0$, then the deck $\DD$ is not of Class I by Lemma~\ref{lm:core-III}.

Let $U$ be an arbitrary non-zero kernel.
The coefficient $c^U_{\DD,2}$ is positive unless $U$ is balanced.
Hence, we can assume that $U$ is balanced.
Since $s_{\DD}(C_4)=0$, we obtain that $c^U_{\DD,4}=0$.
Since all $t(C_6,U)$, $t(C_3\oplus C_3,U)$ and $t(C_3\cup C_3,U)$ are non-negative,
we obtain that $c^U_{\DD,6}\ge s_{\DD}(C_3\oplus C_3)t(C_3\oplus C_3,U)$.
It follows that $c^U_{\DD,6}$ is positive unless $t^{C_3}_U(x)=0$ for almost every $x\in [0,1]$.
Hence, we can further assume that $t^{C_3}_U(x)=0$ for almost every $x\in [0,1]$.
This implies that $t(H,U)=0$ for all principal graphs with eight or ten edges
with the exception of $H$ being $C_8$, $C_{10}$, $C_5\cup C_5$ or $C_5\oplus C_5$.
Hence, the coefficient $c^U_{\DD,8}$ is zero and the coefficient $c^U_{\DD,10}$ is non-negative.

If $s_{\DD}(C_5\oplus C_5)>0$, then $c^U_{\DD,10}$ is positive unless $t^{C_5}_U(x)=0$ for almost every $x\in [0,1]$;
in the latter case, $t(H,U)=0$ for every principal $12$-edge graph $H$ with the exception of $C_{12}$,
which yields that $c^U_{\DD,12}=0$.
If $s_{\DD}(C_5\oplus C_5)=0$ and $s_{\DD}(C_5\cup C_5)>0$, then $c^U_{\DD,10}$ is positive unless $t(C_5,U)=0$;
if $t(C_5,U)=0$, then $s_{\DD}(H)=0$ or $t(H,U)=0$ for every principal $12$-edge graph $H$ with the exception of $C_{12}$, $C_5\oplus P_2\oplus C_5$ and $C_5\oplus C_7$.
In particular, unless the first case described in the statement of the lemma applies, the coefficient $c^U_{\DD,12}$ is non-negative.
Finally, if $s_{\DD}(C_5\oplus C_5)=s_{\DD}(C_5\cup C_5)=0$,
then $s_{\DD}(H)=0$ or $t(H,U)=0$ for every principal $12$-edge graph $H$ with the exception of $C_{12}$, $C_5\oplus P_2\oplus C_5$, $C_5\oplus C_7$ and $C_5\cup C_7$, and $c^U_{\DD,12}$ is non-negative unless the second case in the statement of the lemma applies.
We conclude that $\DD$ is of Class III in either of the three cases distinguished in this paragraph.
\end{proof}

\begin{lemma}
\label{lm:deck12-sC3C3x}
Let $\DD$ be a $12$-deck such that its $10$-deck is of Class III, $s_{\DD}(P_2)>0$, $s_{\DD}(C_3\oplus C_3)=s_{\DD}(C_3\cup C_3)=0$.
If
\begin{itemize}
\item $s_{\DD}(C_3\oplus C_9)>0$,
\item $s_{\DD}(C_3\cup C_9)>0$,
\item $s_{\DD}(C_5\oplus C_5)=0$ and $s_{\DD}(C_5\oplus C_7)>0$, or
\item $s_{\DD}(C_5\oplus C_5)=s_{\DD}(C_5\cup C_5)=0$ and $s_{\DD}(C_5\cup C_7)>0$,
\end{itemize}
then $\DD$ is of Class II.
Otherwise, $\DD$ is of Class I if $s_{\DD}(C_{12})>0$ and of Class III if $s_{\DD}(C_{12})=0$.
\end{lemma}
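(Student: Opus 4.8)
The plan is to follow the template of the proof of Lemma~\ref{lm:deck12-sC3C3}, now for the second case of Corollary~\ref{cor:deck10-III-classify}. First I would unwind the hypotheses: since the $10$-deck of $\DD$ is of Class~III with $s_{\DD}(C_3\oplus C_3)=s_{\DD}(C_3\cup C_3)=0$, Corollary~\ref{cor:deck10-III-classify} forces $\DD$ into its second case, so $s_{\DD}(C_\ell)=0$ for $\ell\in\{4,6,8,10\}$ and $s_{\DD}(C_3\oplus C_5)=s_{\DD}(C_3\oplus C_7)=s_{\DD}(C_3\cup C_5)=s_{\DD}(C_3\cup C_7)=0$. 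I would then determine which principal $12$-edge graphs $H$ can still have $s_{\DD}(H)>0$: any principal graph with two $C_3$-blocks contains $C_3\oplus C_3$ or $C_3\cup C_3$, and any principal graph having a $C_3$-block together with a $C_5$- or $C_7$-block (however they are joined) contains one of $C_3\oplus C_5$, $C_3\cup C_5$, $C_3\oplus C_7$, $C_3\cup C_7$; hence the only surviving principal $12$-edge graphs are $C_{12}$, $C_3\oplus C_9$, $C_3\cup C_9$, $C_5\oplus C_7$, $C_5\cup C_7$ and $C_5\oplus P_2\oplus C_5$. This enumeration is routine but must be done carefully, as the whole argument hinges on it.

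For the Class~II direction I would handle the four listed cases one at a time, in each applying Lemma~\ref{lm:core} with $k=11$ and some $\delta\in(0,1)$ with $s_{\DD}(C_{12})\delta\le 1/2$, reading densities off Lemma~\ref{lm:core-apply}: for $s_{\DD}(C_3\oplus C_9)>0$, take $m=1$, all $\gamma_\ell=0$, $\sigma_1=0$, $\tau_{1,3}=1$, $\tau_{1,9}=-1$, the other $\tau_{1,\ell}=0$ (so $t(C_3\oplus C_9,U)=-1$ and every other relevant density of size $\le 12$ vanishes, noting $t(C_3\oplus C_3,U)=1$ but $s_{\DD}(C_3\oplus C_3)=0$); for $s_{\DD}(C_3\cup C_9)>0$, take $m=0$, $\gamma_3=1$, $\gamma_9=-1$, the other $\gamma_\ell=0$; for $s_{\DD}(C_5\oplus C_5)=0$ and $s_{\DD}(C_5\oplus C_7)>0$, take $m=1$, all $\gamma_\ell=0$, $\sigma_1=0$, $\tau_{1,5}=1$, $\tau_{1,7}=-1$, the rest $0$; and for $s_{\DD}(C_5\oplus C_5)=s_{\DD}(C_5\cup C_5)=0$ and $s_{\DD}(C_5\cup C_7)>0$, take $m=0$, $\gamma_5=1$, $\gamma_7=-1$, the other $\gamma_\ell=0$. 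In each case the lower coefficients $c^U_{\DD,2},\dots,c^U_{\DD,10}$ vanish because the only principal graphs of size $\le 10$ with nonzero density have $s_{\DD}=0$, while $c^U_{\DD,12}\le 1/2-1<0$, so $\DD$ is of Class~II.

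For the remaining case, assume none of the four conditions holds; then also $s_{\DD}(C_3\oplus C_9)=s_{\DD}(C_3\cup C_9)=0$. As at the end of the proof of Lemma~\ref{lm:deck12-sC3C3}, it is enough to prove that $\DD$ is of Class~III when $s_{\DD}(C_{12})=0$ --- the case $s_{\DD}(C_{12})>0$ then follows by deleting all copies of $C_{12}$ and invoking Proposition~\ref{prop:cycle} and additivity of the coefficients exactly as in Lemmas~\ref{lm:deck8-I} and~\ref{lm:deck10-I} --- and for $s_{\DD}(C_{12})=0$ the deck is not of Class~I by Lemma~\ref{lm:core-III}. So fix a nonzero kernel $U$; if it is not balanced then $c^U_{\DD,2}>0$ by Proposition~\ref{prop:coeff2}, so assume $U$ balanced. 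Then $c^U_{\DD,\ell}=0$ for $\ell\in\{2,4,6,8\}$, while $c^U_{\DD,10}=s_{\DD}(C_5\oplus C_5)\int_{[0,1]}t^{C_5}_U(x)^2\dd x+s_{\DD}(C_5\cup C_5)\,t(C_5,U)^2\ge 0$ and $c^U_{\DD,12}=s_{\DD}(C_5\oplus C_7)\int_{[0,1]}t^{C_5}_U(x)\,t^{C_7}_U(x)\dd x+s_{\DD}(C_5\cup C_7)\,t(C_5,U)\,t(C_7,U)+s_{\DD}(C_5\oplus P_2\oplus C_5)\int_{[0,1]}t^{C_5\oplus P_1}_U(x)^2\dd x$, whose last term is non-negative. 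I would then split according to which of $s_{\DD}(C_5\oplus C_5)$, $s_{\DD}(C_5\cup C_5)$ is positive: if $s_{\DD}(C_5\oplus C_5)>0$, then $c^U_{\DD,10}=0$ forces $t^{C_5}_U\equiv 0$ a.e., which kills every term of $c^U_{\DD,12}$ (using $t^{C_5\oplus P_1}_U=Ut^{C_5}_U$); if $s_{\DD}(C_5\oplus C_5)=0$, the failure of the third condition gives $s_{\DD}(C_5\oplus C_7)=0$, and then either $s_{\DD}(C_5\cup C_5)>0$, so $c^U_{\DD,10}=0$ forces $t(C_5,U)=0$ and hence $c^U_{\DD,12}=s_{\DD}(C_5\oplus P_2\oplus C_5)\int_{[0,1]}t^{C_5\oplus P_1}_U(x)^2\dd x\ge0$, or $s_{\DD}(C_5\cup C_5)=0$ and the failure of the fourth condition gives $s_{\DD}(C_5\cup C_7)=0$ and again $c^U_{\DD,12}\ge 0$. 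In every subcase the first nonzero coefficient among $c^U_{\DD,2},\dots,c^U_{\DD,12}$ (if any) is positive; combined with the nonzero kernel from Lemma~\ref{lm:core-III}, this shows $\DD$ is of Class~III. The hard part is the first paragraph's classification of the relevant principal $12$-edge graphs, together with noticing that the negations of the four Class~II conditions eliminate exactly the sign-indefinite terms of $c^U_{\DD,12}$.
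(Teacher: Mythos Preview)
Your proof is correct and follows essentially the same approach as the paper's. The only minor difference is that the paper handles the first two Class~II cases (\,$s_{\DD}(C_3\oplus C_9)>0$ and $s_{\DD}(C_3\cup C_9)>0$\,) with a single kernel (taking $m=1$, $\gamma_3=\tau_{1,3}=1$, $\gamma_9=\tau_{1,9}=-1$ simultaneously), whereas you split them into two separate kernel constructions; both choices work equally well.
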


\begin{proof}
Let $\DD$ be a $12$-deck $\DD$ such that its $10$-deck is of Class III, $s_{\DD}(P_2)>0$ and $s_{\DD}(C_3\oplus C_3)=s_{\DD}(C_3\cup C_3)=0$.
Note that $s_{\DD}(C_{\ell})=0$ for $\ell=4,6,8,10$ and
$s_{\DD}(C_3\oplus C_{\ell})=s_{\DD}(C_3\cup C_{\ell})=0$ for $\ell=5,7$ by Corollary~\ref{cor:deck10-III-classify}.
Note that $s_{\DD}(H)$ can be positive only for the following principal graphs $H$ with at most twelve edges:
$C_5\cup C_5$, $C_5\oplus C_5$, $C_{12}$, $C_3\cup C_9$, $C_3\oplus C_9$, $C_5\oplus P_2\oplus C_5$, $C_5\cup C_7$ and $C_5\oplus C_7$.

We first show that if $\DD$ satisfies one of the four conditions in the statement of the lemma, then $\DD$ is of Class II.
In the first two cases, we apply Lemma~\ref{lm:core} with $k=11$, $m=1$, any $\delta\in (0,1)$ such that $s_{\DD}(C_{12})\delta\le 1/2$,
$\gamma_3=1$, $\gamma_9=-1$, $\gamma_5=\gamma_7=\gamma_{11}=0$, $\sigma_1=0$, $\tau_{1,3}=1$, $\tau_{1,9}=-1$ and $\tau_{1,5}=\tau_{1,7}=\tau_{1,11}=0$, and
we get a non-zero kernel $U$ that satisfies the properties listed in Lemma~\ref{lm:core}.
Since $t(C_5\cup C_5,U)$, $t(C_5\oplus C_5,U)$, $t(C_5\oplus P_2\oplus C_5,U)$, $t(C_5\cup C_7,U)$ and $t(C_5\oplus C_7,U)$ are equal to zero,
it follows that $c^U_{\DD,2}=\cdots=c^U_{\DD,10}=0$ and
\begin{align*}
c^U_{\DD,12} &=s_{\DD}(C_{12})t(C_{12},U)+s_{\DD}(C_3\oplus C_9)t(C_3\oplus C_9,U)\\
             &+s_{\DD}(C_3\cup C_9)t(C_3\cup C_9,U)\\
             &\le s_{\DD}(C_{12})t(C_{12},U)-1 \le -1/2.
\end{align*}	       
Hence, the deck $\DD$ is of Class II.

We next consider the case that $s_{\DD}(C_5\oplus C_5)=0$ and $s_{\DD}(C_5\oplus C_7)>0$.
In this case, we apply Lemma~\ref{lm:core} with $k=11$, $m=1$, any $\delta\in (0,1)$ such that $s_{\DD}(C_{12})\delta\le 1/2$,
$\gamma_3=\gamma_5=\gamma_7=\gamma_9=\gamma_{11}=0$, $\sigma_1=0$, $\tau_{1,5}=1$, $\tau_{1,7}=-1$ and $\tau_{1,3}=\tau_{1,9}=\tau_{1,11}=0$
to get a non-zero kernel $U$.
Similarly to the previous case, it holds that $c^U_{\DD,2}=\cdots=c^U_{\DD,10}=0$ and
\[c^U_{\DD,12}=s_{\DD}(C_{12})t(C_{12},U)+s_{\DD}(C_5\oplus C_7)t(C_5\oplus C_7,U)\le -1/2.\]
In the final case when $s_{\DD}(C_5\oplus C_5)=s_{\DD}(C_5\cup C_5)=0$ and $s_{\DD}(C_5\cup C_7)>0$,
we apply Lemma~\ref{lm:core} with $k=11$, $m=0$, any $\delta\in (0,1)$ such that $s_{\DD}(C_{12})\delta\le 1/2$,
$\gamma_5=1$, $\gamma_7=-1$ and $\gamma_3=\gamma_9=\gamma_{11}=0$.
We obtain a non-zero kernel $U$ such that $c^U_{\DD,2}=\cdots=c^U_{\DD,10}=0$ and
\[c^U_{\DD,12}=s_{\DD}(C_{12})t(C_{12},U)+s_{\DD}(C_5\cup C_7)t(C_5\cup C_7,U)\le -1/2.\]
In both cases, we conclude that the deck $\DD$ is of Class II.

We now prove that if the deck $\DD$ does not satisfy any of the four conditions in the statement of the lemma,
then it is of Class I or Class III.
Following the line of arguments presented in the proofs of Lemmas~\ref{lm:deck8-I} and~\ref{lm:deck10-I},
it is enough to establish that the deck $\DD$ is of Class III when $s_{\DD}(C_{12})=0$;
note that if $s_{\DD}(C_{12})=0$, then the deck $\DD$ is not of Class I by Lemma~\ref{lm:core-III}.

Let $U$ be an arbitrary non-zero kernel.
The coefficient $c^U_{\DD,2}$ is positive unless $U$ is balanced.
Hence, we can assume that $U$ is balanced,
which implies that $c^U_{\DD,2}=\cdots=c^U_{\DD,8}=0$, 
since we already deduced that $s_{\DD}(H)=0$ for every principal graph $H$ with at most eight edges.
In addition, $c^U_{\DD,10}\ge 0$ and the equality holds only in the following three cases:
both $s_{\DD}(C_5\cup C_5)$ and $s_{\DD}(C_5\oplus C_5)$ are zero, or
$s_{\DD}(C_5\oplus C_5)$ is zero and $t(C_5,U)=0$, or
$t_U^{C_5}(x)=0$ for almost every $x\in [0,1]$.
It is now straightforward to verify that
if $c^U_{\DD,10}=0$ and none of the cases given in the statement of the lemma applies, then
\begin{align*}
c^U_{\DD,12} &=s_{\DD}(C_5\oplus P_2\oplus C_5)t(C_5\oplus P_2\oplus C_5)\\
             &=\int_{[0,1]}t^{C_5\oplus P_1}_U(x)^2\dd x\ge 0.
\end{align*}
We can now conclude that the deck $\DD$ is of Class III.
\end{proof}

\begin{lemma}
\label{lm:deck12-sC3P2C3}
Let $\DD$ be a $12$-deck such that its $10$-deck is of Class III, $s_{\DD}(P_2)>0$, $s_{\DD}(C_3\oplus C_3)=0$, $s_{\DD}(C_3\cup C_3)>0$ and $s_{\DD}(C_3\oplus P_2\oplus C_3)>0$.
If
\begin{itemize}
\item $s_{\DD}(C_3\oplus C_9)>0$,
\item $s_{\DD}(C_5\oplus C_5)=0$ and $s_{\DD}(C_5\oplus C_7)>0$, or
\item $s_{\DD}(C_5\oplus C_5)=s_{\DD}(C_5\cup C_5)=0$ and $s_{\DD}(C_5\cup C_7)>0$,
\end{itemize}
then $\DD$ is of Class II.
Otherwise, $\DD$ is of Class I if $s_{\DD}(C_{12})>0$ and of Class III if $s_{\DD}(C_{12})=0$.
\end{lemma}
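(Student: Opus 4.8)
The plan is to follow the template of Lemmas~\ref{lm:deck12-sC3C3} and~\ref{lm:deck12-sC3C3x}. Since the $10$-deck of $\DD$ is of Class III with $s_{\DD}(C_3\oplus C_3)=0$, $s_{\DD}(C_3\cup C_3)>0$ and $s_{\DD}(C_3\oplus P_2\oplus C_3)>0$, Corollary~\ref{cor:deck10-III-classify} puts us in its third case; in particular $s_{\DD}(C_\ell)=0$ for $\ell=4,6,8,10$ and $s_{\DD}(C_3\oplus C_5)=s_{\DD}(C_3\oplus C_7)=0$. I would first record which principal graphs of size at most twelve can have positive frequency in $\DD$: among those of size at most ten only $C_3\cup C_3$, $C_3\oplus P_2\oplus C_3$, $C_3\cup C_5$, $C_3\cup C_7$, $C_5\oplus C_5$, $C_5\cup C_5$, $C_3\oplus P_4\oplus C_3$, $C_3\oplus P_2\oplus C_5$ (together with a few triangle-containing graphs obtained by gluing these along bridges), and the only triangle-free principal graphs of size exactly twelve are $C_{12}$, $C_5\oplus C_7$, $C_5\cup C_7$ and $C_5\oplus P_2\oplus C_5$. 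This enumeration is the combinatorial backbone of the proof.

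For the three cases in which the conclusion is Class II, I would apply Lemma~\ref{lm:core} with $k=11$ and $\delta\in(0,1)$ chosen so that $s_{\DD}(C_{12})\delta\le1/2$: for $s_{\DD}(C_3\oplus C_9)>0$ with $m=1$, all $\gamma_\ell=0$, $\sigma_1=0$, $\tau_{1,3}=1$, $\tau_{1,9}=-1$ and the remaining $\tau_{1,\ell}=0$; for $s_{\DD}(C_5\oplus C_5)=0$ and $s_{\DD}(C_5\oplus C_7)>0$ with $m=1$, all $\gamma_\ell=0$, $\sigma_1=0$, $\tau_{1,5}=1$, $\tau_{1,7}=-1$ and the rest zero; and for $s_{\DD}(C_5\oplus C_5)=s_{\DD}(C_5\cup C_5)=0$ and $s_{\DD}(C_5\cup C_7)>0$ with $m=0$, $\gamma_5=1$, $\gamma_7=-1$ and the rest zero. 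In each construction $t(C_3,U)=0$ and $t^{C_3\oplus P_1}_U\equiv0$ by Lemmas~\ref{lm:core} and~\ref{lm:core-apply}, so by the structural claim below every principal twelve-edge graph containing a triangle has $U$-density zero (with the single exception of $C_3\oplus C_9$ in the first construction, where $t(C_3\oplus C_9,U)=\tau_{1,3}\tau_{1,9}=-1$), while the graph named in the corresponding hypothesis has $U$-density $-1$ and $t(C_{12},U)\le\delta$; a direct check of the remaining triangle-free and non-principal contributions then yields $c^U_{\DD,2}=\dots=c^U_{\DD,10}=0$ and $c^U_{\DD,12}\le s_{\DD}(C_{12})\delta-1\le-1/2$, whence $\DD$ is of Class II.

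For the remaining cases it suffices, just as in Lemmas~\ref{lm:deck8-I} and~\ref{lm:deck10-I}, to prove that $\DD$ is of Class III when $s_{\DD}(C_{12})=0$: Lemma~\ref{lm:core-III} then shows $\DD$ is not of Class I, and when $s_{\DD}(C_{12})>0$ one removes all copies of $C_{12}$ from $\DD$ and invokes Proposition~\ref{prop:cycle}. So let $U$ be a non-zero kernel, which we may assume to be balanced (otherwise $c^U_{\DD,2}>0$ by Proposition~\ref{prop:coeff2}), so that only principal graphs contribute by Proposition~\ref{prop:degreeone} and $c^U_{\DD,2}=c^U_{\DD,4}=0$. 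The argument then cascades: $c^U_{\DD,6}=s_{\DD}(C_3\cup C_3)t(C_3,U)^2$ is positive unless $t(C_3,U)=0$; then $c^U_{\DD,8}=s_{\DD}(C_3\oplus P_2\oplus C_3)\int_{[0,1]}t^{C_3\oplus P_1}_U(x)^2\dd x$ is positive unless $t^{C_3\oplus P_1}_U\equiv0$; assuming that, $t^{C_3\oplus P_n}_U=U^{n-1}t^{C_3\oplus P_1}_U\equiv0$ for all $n\ge1$, which annihilates every triangle-containing principal graph of size ten and leaves $c^U_{\DD,10}=s_{\DD}(C_5\oplus C_5)\int_{[0,1]}t^{C_5}_U(x)^2\dd x+s_{\DD}(C_5\cup C_5)t(C_5,U)^2\ge0$; assuming $c^U_{\DD,10}=0$, one splits according to which of $s_{\DD}(C_5\oplus C_5)$ and $s_{\DD}(C_5\cup C_5)$ vanish, in each sub-case using the failure of all three hypotheses of the lemma (which forces $s_{\DD}(C_5\oplus C_7)=s_{\DD}(C_5\cup C_7)=0$ whenever $s_{\DD}(C_5\oplus C_5)=0$) to conclude that either $t^{C_5}_U\equiv0$ and $c^U_{\DD,12}=0$, or $c^U_{\DD,12}=s_{\DD}(C_5\oplus P_2\oplus C_5)\int_{[0,1]}t^{C_5\oplus P_1}_U(x)^2\dd x\ge0$. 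Hence the first non-zero coefficient among $c^U_{\DD,2},\dots,c^U_{\DD,12}$, if it exists, is positive, so $\DD$ is of Class III.

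The hard part, and the structural claim used throughout, is that any twelve-edge principal graph $H$ with $s_{\DD}(H)>0$ containing a triangle satisfies $t(H,U)=0$ as soon as $t(C_3,U)=0$ and $t^{C_3\oplus P_1}_U\equiv0$, the only possible exception being $H=C_3\oplus C_9$ (which is in any case killed once $t^{C_3}_U\equiv0$ and is absent whenever the first hypothesis of the lemma fails). A triangle of $H$ is forced to be a single block; if it is a whole component of $H$ then $t(H,U)$ carries the vanishing factor $t(C_3,U)$; otherwise, since $s_{\DD}(C_3\oplus C_j)=0$ for $j\in\{3,5,7\}$ by Corollary~\ref{cor:deck10-III-classify} and for $j=9$ by the failed hypothesis, no triangle block can share a vertex with an odd-cycle block, so every triangle block is joined to the rest of its component only through bridges; a counting argument on the block tree (two non-triangle leaf cycles already cost at least $10$ edges, plus $3$ for an internal triangle and $2$ for the bridges at its cut vertices, exceeding $12$) forces $H$ to possess a \emph{leaf}-block triangle attached by a single bridge $vw$, and then \eqref{eq:glue} together with the self-adjointness of $U$ gives $t(H,U)=\langle t^{C_3}_U,U\rho\rangle=\langle U t^{C_3}_U,\rho\rangle=0$ for an appropriate $\rho\in L_2[0,1]$. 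Verifying the block bookkeeping carefully — in particular that no twelve-edge triangle-free principal graph other than $C_{12}$, $C_5\oplus C_7$, $C_5\cup C_7$, $C_5\oplus P_2\oplus C_5$ can occur — is the technical core of the argument.
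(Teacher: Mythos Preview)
Your proof is correct and follows essentially the same approach as the paper: the three kernel constructions for Class~II (via Lemma~\ref{lm:core} with the parameter choices you list) are identical to the paper's, and the cascading argument for Class~III through $c^U_{\DD,6}$, $c^U_{\DD,8}$, $c^U_{\DD,10}$ matches the paper's reasoning exactly.

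The one place you go beyond the paper is your ``structural claim'' that every twelve-edge triangle-containing principal graph $H$ with $s_{\DD}(H)>0$ has $t(H,U)=0$ once $t(C_3,U)=0$ and $t^{C_3\oplus P_1}_U\equiv 0$ (except $C_3\oplus C_9$). The paper simply asserts the equivalent statement---``$s_{\DD}(H)=0$ or $t(H,U)=0$ for a principal graph $H$ with at most twelve edges unless $H$ is an even cycle or $H=C_3\oplus C_9$''---presumably by direct inspection of the finite list of principal $12$-edge graphs in Figure~\ref{fig:princ12}. Your block-tree argument is a nice conceptual substitute; the self-adjointness step $\langle t^{C_3}_U,U\rho\rangle=\langle Ut^{C_3}_U,\rho\rangle$ is exactly right for a leaf triangle with a single bridge, and your counting sketch that such a leaf must exist is correct for $12$ edges (any attempt to give every leaf triangle's cut vertex two bridges either forces a $C_3\oplus C_j$ via an accidental triangle among the bridge endpoints, or creates a cycle in the block tree, or pushes the edge count past $12$). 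Your explicit flag that this bookkeeping is the ``technical core'' is appropriate; the paper sidesteps it by enumeration.
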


\begin{proof}
Let $\DD$ be a $12$-deck $\DD$ such that its $10$-deck is of Class III,
$s_{\DD}(P_2)>0$, $s_{\DD}(C_3\oplus C_3)=0$, $s_{\DD}(C_3\cup C_3)>0$ and $s_{\DD}(C_3\oplus P_2\oplus C_3)>0$.
Note that $s_{\DD}(C_{\ell})=0$ for $\ell=4,6,8,10$ and
$s_{\DD}(C_3\oplus C_{\ell})=0$ for $\ell=5,7$ by Corollary~\ref{cor:deck10-III-classify}.

We first show that if $\DD$ satisfies one of the three conditions in the statement of the lemma, then $\DD$ is of Class II.
In the first case, we apply Lemma~\ref{lm:core} with $k=11$, $m=1$, any $\delta\in (0,1)$ such that $s_{\DD}(C_{12})\delta\le 1/2$,
$\gamma_3=\gamma_5=\gamma_7=\gamma_9=\gamma_{11}=0$, $\sigma_1=0$, $\tau_{1,3}=1$, $\tau_{1,9}=-1$ and $\tau_{1,5}=\tau_{1,7}=\tau_{1,11}=0$
to get a non-zero kernel $U$ that satisfies the properties listed in Lemma~\ref{lm:core}.
Observe that $s_{\DD}(H)=0$ or $t(H,U)=0$ for a principal graph $H$ with at most twelve edges
unless $H$ is an even cycle or $H=C_3\oplus C_9$.
In the second case, we apply Lemma~\ref{lm:core} with $k=11$, $m=1$, any $\delta\in (0,1)$ such that $s_{\DD}(C_{12})\delta\le 1/2$,
$\gamma_3=\gamma_5=\gamma_7=\gamma_9=\gamma_{11}=0$, $\sigma_1=0$, $\tau_{1,5}=1$, $\tau_{1,7}=-1$ and $\tau_{1,3}=\tau_{1,9}=\tau_{1,11}=0$ and
get a non-zero kernel $U$ such that
that $s_{\DD}(H)=0$ or $t(H,U)=0$ for a principal graph $H$ with at most twelve edges
unless $H$ is an even cycle or $H=C_5\oplus C_7$.
Finally, in the third case, we apply Lemma~\ref{lm:core} with $k=11$, $m=0$, any $\delta\in (0,1)$ such that $s_{\DD}(C_{12})\delta\le 1/2$,
$\gamma_5=1$, $\gamma_7=-1$ and $\gamma_3=\gamma_9=\gamma_{11}=0$ and
obtain a non-zero kernel $U$ such that
that $s_{\DD}(H)=0$ or $t(H,U)=0$ for a principal graph $H$ with at most twelve edges
unless $H$ is an even cycle or $H=C_5\cup C_7$.
In each of the three cases, it holds that $c^U_{\DD,2}=\cdots=c^U_{\DD,10}=0$ and $c^U_{\DD,12}\le -1/2$,
i.e., the deck $\DD$ is of Class II.

We now prove that if the deck $\DD$ does not satisfy any of the three conditions in the statement of the lemma,
then it is of Class I or Class III.
Following the line of arguments presented in the proofs of Lemmas~\ref{lm:deck8-I} and~\ref{lm:deck10-I},
it is enough to establish that the deck $\DD$ is of Class III when $s_{\DD}(C_{12})=0$;
note that if $s_{\DD}(C_{12})=0$, then the deck $\DD$ is not of Class I by Lemma~\ref{lm:core-III}.

Let $U$ be an arbitrary non-zero kernel.
The coefficient $c^U_{\DD,2}$ is positive unless $U$ is balanced.
Hence, we can assume that $U$ is balanced.
This implies that $c^U_{\DD,4}=0$, $c^U_{\DD,6}\ge 0$ and the inequality is strict unless $t(C_3,U)=0$.
If $t(C_3,U)=0$, then $c^U_{\DD,8}\ge 0$ and the inequality is strict unless $t^{C_3\oplus P_1}_U(x)=0$ for almost every $x\in [0,1]$.
We next assume that $c^U_{\DD,6}=0$ and $c^U_{\DD,8}=0$ and
observe that $s_{\DD}(H)=0$ or $t(H,U)=0$ for every principal graph $H$ possibly with the following exceptions:
$C_5\cup C_5$, $C_5\oplus C_5$, $C_5\oplus P_2\oplus C_5$, $C_5\cup C_7$ and $C_5\oplus C_7$ (note that $s_{\DD}(C_3\oplus C_9)=0$;
otherwise, the first case in the statement of the lemma applies).
It follows that $c^U_{\DD,10}\ge 0$ and the equality holds only in the following three cases:
both $s_{\DD}(C_5\cup C_5)$ and $s_{\DD}(C_5\oplus C_5)$ are zero, or
$s_{\DD}(C_5\oplus C_5)$ is zero and $t(C_5,U)=0$, or
$t_U^{C_5}(x)=0$ for almost every $x\in [0,1]$.
It is now straightforward to verify that
if $c^U_{\DD,10}=0$ and none of the last two cases given in the statement of the lemma applies,
then $c^U_{\DD,12}=s_{\DD}(C_5\oplus P_2\oplus C_5)t(C_5\oplus P_2\oplus C_5)\ge 0$.
We conclude that the deck $\DD$ is indeed of Class III.
\end{proof}

The final five lemmas concern the last case described in Corollary~\ref{cor:deck10-III-classify};
each deals with one of the four cases based on which of the two quantities $s_{\DD}(C_3\oplus P_4\oplus C_3)$ and $s_{\DD}(C_5\oplus C_5)$
are zero or positive;
the final two lemmas deal with the case when both quantities are positive.

\begin{lemma}
\label{lm:deck12-det00}
Let $\DD$ be a $12$-deck such that its $10$-deck is of Class III, $s_{\DD}(P_2)>0$, $s_{\DD}(C_3\oplus C_3)=s_{\DD}(C_3\oplus P_2\oplus C_3)=0$ and $s_{\DD}(C_3\cup C_3)>0$.
Further suppose that $s_{\DD}(C_3\oplus P_4\oplus C_3)=s_{\DD}(C_5\oplus C_5)=s_{\DD}(C_3\oplus P_2\oplus C_5)=0$.
If
\begin{itemize}
\item $s_{\DD}(C_3\oplus C_9)>0$,
\item $s_{\DD}(C_3\oplus P_2\oplus C_7)>0$,
\item $4s_{\DD}(C_3\oplus P_6\oplus C_3)s_{\DD}(C_5\oplus P_2\oplus C_5)<s_{\DD}(C_3\oplus P_4\oplus C_5)^2$,
\item $s_{\DD}(C_5\oplus C_7)>0$,
\item $s_{\DD}(C_5\cup C_5)=0$ and $s_{\DD}(C_5\cup C_3\oplus P_1\oplus C_3)>0$, or
\item $s_{\DD}(C_5\cup C_5)=0$ and $s_{\DD}(C_5\cup C_7)>0$,
\end{itemize}
then $\DD$ is of Class II.
Otherwise, $\DD$ is of Class I if $s_{\DD}(C_{12})>0$ and of Class III if $s_{\DD}(C_{12})=0$.
\end{lemma}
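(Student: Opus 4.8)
The plan is to follow the template of Lemmas~\ref{lm:deck12-sC3C3}--\ref{lm:deck12-sC3P2C3}. First I would show that each of the six listed configurations forces Class~II by exhibiting an explicit witnessing kernel obtained from Lemma~\ref{lm:core}. Then I would show that if none of the six holds, every non-zero kernel $U$ makes the first non-zero coefficient among $c^U_{\DD,2},\dots,c^U_{\DD,12}$ positive (or makes all of them zero); since Corollary~\ref{cor:deck10-III-classify} gives $s_{\DD}(C_4)=s_{\DD}(C_6)=s_{\DD}(C_8)=s_{\DD}(C_{10})=0$, Lemma~\ref{lm:core-III} then provides (when $s_{\DD}(C_{12})=0$) a non-zero kernel killing all twelve coefficients, so $\DD$ is of Class~III; and when $s_{\DD}(C_{12})>0$ the now-standard argument of Lemmas~\ref{lm:deck8-I} and~\ref{lm:deck10-I} (pass to the deck $\DD'$ with all copies of $C_{12}$ removed, which is of Class~III, and use $t(C_{12},U)>0$ from Proposition~\ref{prop:cycle}) upgrades this to Class~I.

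For the Class~II direction I would apply Lemma~\ref{lm:core} with $k=11$, a small $\delta$, and the parameters $\gamma_\ell,\sigma_i,\tau_{i,\ell}$ tuned via Lemma~\ref{lm:core-apply} so that $t(C_\ell,U)=0$ for all odd $\ell\le 11$ and the densities of all principal graphs with at most ten edges vanish, leaving exactly one strictly negative term in $c^U_{\DD,12}$. Concretely: for $s_{\DD}(C_3\oplus C_9)>0$ use $m=1$, all $\gamma_\ell=0$, $\sigma_1=0$, $\tau_{1,3}=1$, $\tau_{1,9}=-1$ and the rest zero, so $t(C_3\oplus C_9,U)=-1$; for $s_{\DD}(C_5\oplus C_7)>0$ similarly with $\tau_{1,5}=1$, $\tau_{1,7}=-1$; for $s_{\DD}(C_5\cup C_5)=0$ and $s_{\DD}(C_5\cup C_7)>0$ use $m=0$, $\gamma_5=1$, $\gamma_7=-1$, giving $t(C_5\cup C_7,U)=-1$; the cases $s_{\DD}(C_3\oplus P_2\oplus C_7)>0$, $s_{\DD}(C_5\cup C_5)=0$ with $s_{\DD}(C_5\cup(C_3\oplus P_1\oplus C_3))>0$, and the negative-definite case $4s_{\DD}(C_3\oplus P_6\oplus C_3)s_{\DD}(C_5\oplus P_2\oplus C_5)<s_{\DD}(C_3\oplus P_4\oplus C_5)^2$ require a non-zero eigenvalue, so I would take $m=1$, $\sigma_1=\pm\delta/2$ and rescale $\tau_{1,3},\tau_{1,5},\tau_{1,7}$ accordingly --- for the last case exactly as in the third case of Lemma~\ref{lm:deck10-II}, using $t^{C_3\oplus P_3}_U=\sigma_1^3\tau_{1,3}f_1$ and $t^{C_5\oplus P_1}_U=\sigma_1\tau_{1,5}f_1$ to realise the quadratic form on a prescribed vector $(z_3,z_5)$ with value $-1$. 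In every case $\delta$ is chosen small enough that $s_{\DD}(C_{12})\,t(C_{12},U)$ together with any residual non-negative term (such as $s_{\DD}(C_3\oplus P_6\oplus C_3)\,\sigma_1^6\tau_{1,3}^2$ in the $C_3\oplus P_2\oplus C_7$ case) is at most $1/2$, so that $c^U_{\DD,2}=\dots=c^U_{\DD,10}=0$ while $c^U_{\DD,12}\le-1/2$.

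For the Class~I/III direction, fix a non-zero kernel $U$. If $U$ is not balanced then $c^U_{\DD,2}>0$ by Proposition~\ref{prop:coeff2}, so assume $U$ is balanced; then $t(H,U)=0$ for every non-principal $H$ with at most twelve edges, and in particular $c^U_{\DD,2}=c^U_{\DD,4}=0$ and $c^U_{\DD,6}=s_{\DD}(C_3\cup C_3)\,t(C_3,U)^2\ge 0$ (using $s_{\DD}(C_3\oplus C_3)=s_{\DD}(C_6)=0$), which is strictly positive unless $t(C_3,U)=0$. Assuming $t(C_3,U)=0$, the only $8$-edge principal graphs with potentially positive $s_{\DD}$ are $C_3\cup C_5$ (density $0$ here) and $C_3\oplus P_2\oplus C_3$ (with $s_{\DD}=0$), so $c^U_{\DD,8}=0$; and using $s_{\DD}(C_3\oplus P_4\oplus C_3)=s_{\DD}(C_5\oplus C_5)=s_{\DD}(C_3\oplus P_2\oplus C_5)=0$ and Corollary~\ref{cor:deck10-III-classify}, the coefficient $c^U_{\DD,10}$ reduces to $s_{\DD}(C_5\cup C_5)\,t(C_5,U)^2\ge 0$, strictly positive unless $s_{\DD}(C_5\cup C_5)=0$ or $t(C_5,U)=0$. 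Assuming also $c^U_{\DD,10}=0$, the hypotheses together with the negations of the six conditions eliminate every $12$-edge principal term except those of $C_3\oplus P_6\oplus C_3$, $C_3\oplus P_4\oplus C_5$ and $C_5\oplus P_2\oplus C_5$ (graphs with a $C_3$ component vanish since $t(C_3,U)=0$; the graphs $C_5\cup C_7$ and $C_5\cup(C_3\oplus P_1\oplus C_3)$ either vanish because $t(C_5,U)=0$ or have $s_{\DD}=0$ by the negations of the fifth and sixth conditions; $C_3\oplus C_9$, $C_3\oplus P_2\oplus C_7$ and $C_5\oplus C_7$ have $s_{\DD}=0$ by the negations of the first, second and fourth conditions; and any connected principal $12$-edge graph with three or more triangle blocks contains $C_3\oplus C_3$, $C_3\oplus P_2\oplus C_3$ or $C_3\oplus P_4\oplus C_3$). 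Writing $t(C_3\oplus P_6\oplus C_3,U)=\int_{[0,1]}t^{C_3\oplus P_3}_U(x)^2\dd x$, $t(C_5\oplus P_2\oplus C_5,U)=\int_{[0,1]}t^{C_5\oplus P_1}_U(x)^2\dd x$ and $t(C_3\oplus P_4\oplus C_5,U)=\int_{[0,1]}t^{C_3\oplus P_3}_U(x)\,t^{C_5\oplus P_1}_U(x)\dd x$ via \eqref{eq:glue}, we obtain
\[ c^U_{\DD,12}=\int_{[0,1]}\begin{pmatrix}t^{C_3\oplus P_3}_U(x)\\ t^{C_5\oplus P_1}_U(x)\end{pmatrix}^{T}\begin{pmatrix}s_{\DD}(C_3\oplus P_6\oplus C_3) & \frac{s_{\DD}(C_3\oplus P_4\oplus C_5)}{2}\\ \frac{s_{\DD}(C_3\oplus P_4\oplus C_5)}{2} & s_{\DD}(C_5\oplus P_2\oplus C_5)\end{pmatrix}\begin{pmatrix}t^{C_3\oplus P_3}_U(x)\\ t^{C_5\oplus P_1}_U(x)\end{pmatrix}\dd x, \]
and this is non-negative because the negation of the third condition makes the matrix positive semidefinite. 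Hence the first non-zero coefficient is positive whenever it exists, which completes this direction.

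I expect the main obstacle to be the bookkeeping in the third paragraph: one must check, against Corollary~\ref{cor:deck10-III-classify} and all the zero-multiplicity hypotheses, that the list of principal $12$-edge graphs whose density can survive the reductions $t(C_3,U)=0$ and $c^U_{\DD,10}=0$ is exactly $C_3\oplus P_6\oplus C_3$, $C_3\oplus P_4\oplus C_5$, $C_5\oplus P_2\oplus C_5$ together with the ``disjoint'' graphs covered by the last two conditions --- in particular that every connected principal $12$-edge graph with at least three triangle blocks, and every configuration mixing a triangle with longer cycles other than the three listed glued paths, is already forbidden by the hypotheses or by a forbidden subgraph. The remaining computations (the rescalings in the Class~II constructions and the identification of $c^U_{\DD,12}$ with the displayed quadratic form through \eqref{eq:glue} and the operator identity $t^{H\oplus P_n}_U=U^n t^H_U$) are routine.
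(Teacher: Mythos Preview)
Your proposal is correct and follows essentially the same approach as the paper. The only organisational difference is that the paper economises by handling conditions~1--2 with a single kernel ($\sigma_1=\sqrt{\delta}$, $\tau_{1,3}=1$, $\tau_{1,7}=\tau_{1,9}$ a large negative multiple of $\delta^{-1}$) and conditions~5--6 with another single kernel ($\sigma_1=\delta$, $\tau_{1,3}=1$, $\gamma_5$ large negative, $\gamma_7=1$), whereas you treat each condition separately; your simpler choice $\sigma_1=0$ for condition~1 alone is perfectly valid and is in fact what the paper uses for the analogous $C_3\oplus C_9$ case in Lemmas~\ref{lm:deck12-det+0}--\ref{lm:deck12-det}. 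The Class~I/III direction, including the reduction to the positive-semidefinite quadratic form in $t^{C_3\oplus P_3}_U$ and $t^{C_5\oplus P_1}_U$, is identical to the paper's argument.
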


\begin{figure}
\begin{center}
\epsfbox{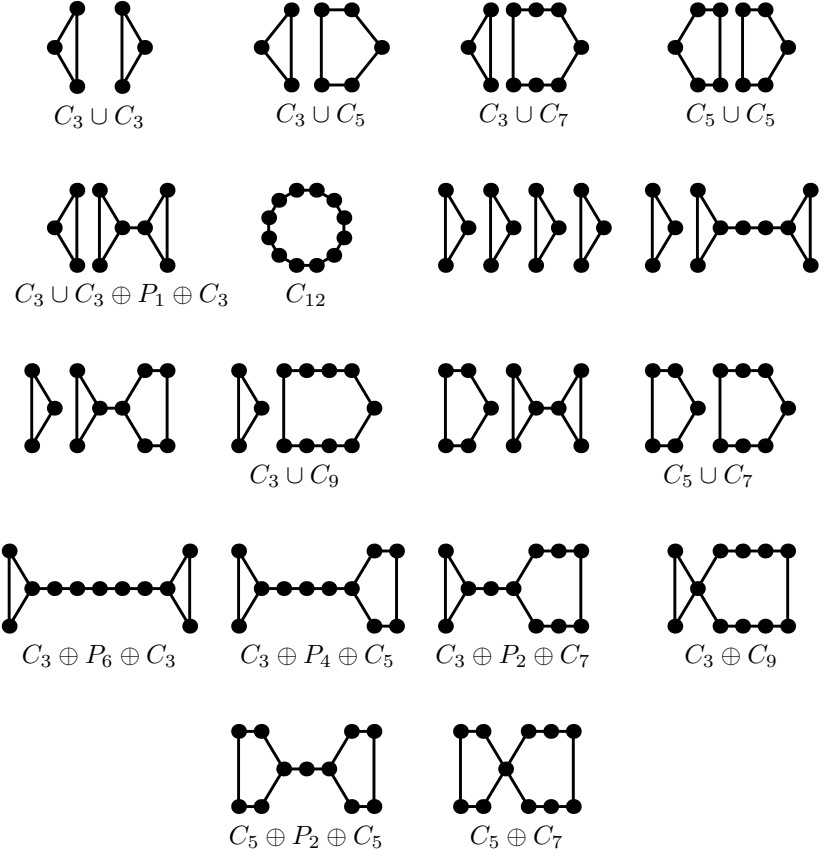}
\end{center}
\caption{The exceptional graphs in the proof of Lemma~\ref{lm:deck12-det00}.}
\label{fig:deck12-det00}
\end{figure}

\begin{proof}
Fix a $12$-deck $\DD$ with the properties as supposed in the statement of the lemma.
Note that $s_{\DD}(C_{\ell})=0$ for $\ell=4,6,8,10$ and
$s_{\DD}(C_3\oplus C_{\ell})=0$ for $\ell=5,7$ by Corollary~\ref{cor:deck10-III-classify}.
This together with the assumptions of the lemma implies that
$s_{\DD}(H)=0$ for all principal graphs $H$ with at most twelve edges with the following exceptions:
$C_3\cup C_3$, $C_3\cup C_5$, $C_3\cup C_7$, $C_5\cup C_5$,
$C_3\cup C_3\oplus P_1\oplus C_3$, $C_{12}$, $C_3\cup C_3\cup C_3\cup C_3$,
$C_3\cup C_3\oplus P_3\oplus C_3$, $C_3\cup C_3\oplus P_1\oplus C_5$, $C_3\cup C_9$, $C_5\cup C_3\oplus P_1\oplus C_3$,
$C_5\cup C_7$, $C_3\oplus P_6\oplus C_3$, $C_3\oplus P_4\oplus C_5$, $C_3\oplus P_2\oplus C_7$,
$C_3\oplus C_9$, $C_5\oplus P_2\oplus C_5$ and $C_5\oplus C_7$.
The graphs are depicted in Figure~\ref{fig:deck12-det00}.

We first show that if the deck $\DD$ satisfies one of the six conditions in the statement of the lemma,
then $\DD$ is of Class II.
We will construct four different kernels using Lemma~\ref{lm:core}.
First, we apply Lemma~\ref{lm:core} with $k=11$, $m=1$, any $\delta\in (0,1/2)$ such that $s_{\DD}(C_{12})\delta\le 1/2$,
$\gamma_3=\gamma_5=\gamma_7=\gamma_9=\gamma_{11}=0$, $\sigma_1=\sqrt{\delta}$ (note that $\sigma_1^{12} \le \delta/2$),
$\tau_{1,3}=1$, $\tau_{1,7}=\tau_{1,9}=-(1+s_{\DD}(C_{12})+s_{\DD}(C_3\oplus P_6\oplus C_3))\delta^{-1}$ and $\tau_{1,5}=\tau_{1,11}=0$
to obtain a non-zero kernel $U$ with the properties given in Lemma~\ref{lm:core}.
Observe that $t(H,U)=0$ for the graphs depicted in Figure~\ref{fig:deck12-det00}
unless $H$ is $C_{12}$, $C_3\oplus P_6\oplus C_3$, $C_3\oplus P_2\oplus C_7$ or $C_3\oplus C_9$.
It follows that $c^U_{\DD,2}=\cdots=c^U_{\DD,10}=0$ and,
\begin{align*}
c^U_{\DD,12} & = \sum_{H,\|H\|=12} s_{\DD}(H) t(H,U) \\
	     & \le \sigma_1^6 \tau_{1,3}^2 s_{\DD}(C_3\oplus P_6\oplus C_3)  +  \sigma_1^2 \tau_{1,3} \tau_{1,7} s_{\DD}(C_3\oplus P_2\oplus C_7) \\
	     & + \tau_{1,3} \tau_{1,9} s_{\DD}(C_3\oplus C_9) + \delta  s_{\DD}(C_{12}) \\
	     & \le s_{\DD}(C_3\oplus P_6\oplus C_3)-(1+s_{\DD}(C_{12})+s_{\DD}(C_3\oplus P_6\oplus C_3))+s_{\DD}(C_{12})\\
	     & \le -1\,.
\end{align*}
Hence, the deck $\DD$ is of Class II if the first or second condition in the statement applies.

We next analyze the case when the third condition applies.
If $4s_{\DD}(C_3\oplus P_6\oplus C_3)s_{\DD}(C_5\oplus P_2\oplus C_5)<s_{\DD}(C_3\oplus P_4\oplus C_5)^2$,
then there exists a vector $(z_3,z_5)\in\RR^2$ such that
\[\begin{pmatrix} z_3 \\ z_5 \end{pmatrix}^T
  \begin{pmatrix}
  s_{\DD}(C_3\oplus P_6\oplus C_3) & \frac{s_{\DD}(C_3\oplus P_4\oplus C_5)}{2} \\
  \frac{s_{\DD}(C_3\oplus P_4\oplus C_5)}{2} & s_{\DD}(C_5\oplus P_2\oplus C_5)
  \end{pmatrix}
  \begin{pmatrix} z_3 \\ z_5 \end{pmatrix}=-1.\]
We apply Lemma~\ref{lm:core} with $k=11$, $m=1$, any $\delta\in (0,1)$ such that $s_{\DD}(C_{12}) \delta \le 1/2$,
$\gamma_3=\gamma_5=\gamma_7=\gamma_9=\gamma_{11}=0$, $\sigma_1=\delta/2$, $\tau_{1,3}=8z_3/\delta^3$, $\tau_{1,5}=2z_5/\delta$ and $\tau_{1,7}=\tau_{1,9}=\tau_{1,11}=0$
to get a non-zero kernel $U$ with the properties given in Lemma~\ref{lm:core}.
Let $f_1$ be the eigenfunction associated with the eigenvalue $\sigma_1$ and
note that $t^{C_3 \oplus P_3}_U(x)=\sigma_1^3 \tau_{1,3} f_1(x)=z_3f_1(x)$ 
and $t^{C_5 \oplus P_1}_U(x)=\sigma_1 \tau_{1,5} f_1(x)=z_5f_1(x)$.
Observe that $t(H,U)=0$ for the graphs depicted in Figure~\ref{fig:deck12-det00}
unless $H$ is $C_{12}$, $C_3\oplus P_6\oplus C_3$, $C_3\oplus P_4\oplus C_5$ or $C_5\oplus P_2\oplus C_5$.
It follows that $c^U_{\DD,2}=\cdots=c^U_{\DD,10}=0$ and
\begin{align*}
 c^U_{\DD,12} =& s_{\DD}(C_{12})t(C_{12},U)+\\
              & \int_{[0,1]}
                \begin{pmatrix} z_3 \\ z_5 \end{pmatrix}^T
                \begin{pmatrix}
                s_{\DD}(C_3\oplus P_6\oplus C_3) & \frac{s_{\DD}(C_3\oplus P_4\oplus C_5)}{2} \\
                \frac{s_{\DD}(C_3\oplus P_4\oplus C_5)}{2} & s_{\DD}(C_5\oplus P_2\oplus C_5)
                \end{pmatrix}
		            \begin{pmatrix} z_3 \\ z_5 \end{pmatrix} f_1(x)^2 \dd x,
\end{align*}
which is at most $-1/2$. Hence, the deck $\DD$ is of Class II.

If the fourth condition in the statement holds, 
we apply Lemma~\ref{lm:core} with $k=11$, $m=1$, any $\delta\in (0,1)$ such that $s_{\DD}(C_{12})\delta\le 1/2$,
$\gamma_3=\gamma_5=\gamma_7=\gamma_9=\gamma_{11}=0$, $\sigma_1=0$,
$\tau_{1,3}=\tau_{1,9}=\tau_{1,11}=0$, $\tau_{1,5}=1$ and $\tau_{1,7}=-1$
to obtain a non-zero kernel $U$ with the properties given in Lemma~\ref{lm:core}.
Observe that $t(H,U)=0$ for the graphs depicted in Figure~\ref{fig:deck12-det00}
unless $H$ is $C_{12}$ or  $C_5\oplus C_7$. 
It follows that $c^U_{\DD,2}=\cdots=c^U_{\DD,10}=0$, and
$c^U_{\DD,12} \leq -1/2$, so we again conclude that the $12$-deck $\DD$ is of Class II.

It remains to analyze the last two conditions given in the statement of the lemma.
We apply Lemma~\ref{lm:core} with $k=11$, $m=1$, any $\delta\in (0,1/2)$ such that $s_{\DD}(C_{12})\delta\le 1/2$,
$\gamma_3=\gamma_9=\gamma_{11}=0$, $\gamma_5=-(1+s_{\DD}(C_{12})+s_{\DD}(C_3\oplus P_6\oplus C_3))\delta^{-1}$, $\gamma_7=1$, $\sigma_1=\delta$, $\tau_{1,3}=1$ and $\tau_{1,5}=\tau_{1,7}=\tau_{1,9}=\tau_{1,11}=0$
to get a non-zero kernel $U$ with the properties given in Lemma~\ref{lm:core}.
Observe that $t(H,U)=0$ for the graphs depicted in Figure~\ref{fig:deck12-det00}
unless $H$ is $C_{12}$, $C_5\cup C_5$, $C_5\cup C_3\oplus P_1\oplus C_3$, $C_5\cup C_7$ or $C_3\oplus P_6\oplus C_3$.
It follows that $c^U_{\DD,2}=\cdots=c^U_{\DD,10}=0$ (note that $s_{\DD}(C_5\cup C_5)=0$) and,
\begin{align*}
c^U_{\DD,12} & = \sum_{H,\|H\|=12} s_{\DD}(H) t(H,U) \\ 
	     & \le \sigma_1^6 \tau_{1,3}^2 s_{\DD}(C_3\oplus P_6\oplus C_3) + \sigma_1 \gamma_5 \tau_{1,3}^2 s_{\DD}(C_5\cup C_3\oplus P_1\oplus C_3) \\
	     & + \gamma_5 \gamma_7  s_{\DD}(C_5\cup C_7)  + \delta  s_{\DD}(C_{12}) \\
	     & \le s_{\DD}(C_3\oplus P_6\oplus C_3)-(1+s_{\DD}(C_{12})+s_{\DD}(C_3\oplus P_6\oplus C_3))+s_{\DD}(C_{12}) \\
	     & \le -1\,.
\end{align*}
We conclude that the deck $\DD$ is of Class II.

We now prove that if the deck $\DD$ does not satisfy any of the six conditions in the statement of the lemma,
then it is of Class I or Class III.
Following the line of arguments presented in the proofs of Lemmas~\ref{lm:deck8-I} and~\ref{lm:deck10-I},
it is enough to establish that the deck $\DD$ is of Class III when $s_{\DD}(C_{12})=0$;
note that if $s_{\DD}(C_{12})=0$, then the deck $\DD$ is not of Class I by Lemma~\ref{lm:core-III}.

Let $U$ be an arbitrary non-zero kernel.
The coefficient $c^U_{\DD,2}$ is positive unless $U$ is balanced.
Hence, we can assume that $U$ is balanced.
Inspecting the graphs in Figure~\ref{fig:deck12-det00},
we obtain that $c^U_{\DD,4}=0$ and $c^U_{\DD,6}\ge 0$, and
the equality holds only if $t(C_3\cup C_3,U)=t(C_3,U)^2=0$ since $s_{\DD}(C_3\cup C_3)>0$.
Hence, we can assume that $t(C_3,U)=0$ in the rest.
We next obtain, again inspecting the graphs in Figure~\ref{fig:deck12-det00}, that
$c^U_{\DD,8}=0$ and $c^U_{\DD,10}\ge 0$, and the equality
can hold only if $s_{\DD}(C_5\cup C_5)=0$ or $t(C_5,U)=0$;
if $s_{\DD}(C_5\cup C_5)=0$ or $t(C_5,U)=0$,
it holds that $s_{\DD}(H)t(H,U)=0$ if $H$ is $C_5\cup C_3\oplus P_1\oplus C_3$ or $C_5\cup C_7$ (here, we use that $\DD$
does not satisfy the last two conditions in the statement of the lemma).
In particular, if $c^U_{\DD,10}=0$,
then $s_{\DD}(H)t(H,U)$ can be non-zero only for the following three principal $12$-edge graphs:
$C_3\oplus P_6\oplus C_3$, $C_3\oplus P_4\oplus C_5$ and $C_5\oplus P_2\oplus C_5$.
It follows that
\[c^U_{\DD,12} = \int_{[0,1]}
                \mbox{\scalebox{0.90}{$
                \begin{pmatrix} t_U^{C_3\oplus P_3}(x) \\ t_U^{C_5\oplus P_1}(x) \end{pmatrix}^T
                \begin{pmatrix}
                s_{\DD}(C_3\oplus P_6\oplus C_3) & \frac{s_{\DD}(C_3\oplus P_4\oplus C_5)}{2} \\
                \frac{s_{\DD}(C_3\oplus P_4\oplus C_5)}{2} & s_{\DD}(C_5\oplus P_2\oplus C_5)
                \end{pmatrix}
                \begin{pmatrix} t_U^{C_3\oplus P_3}(x) \\ t_U^{C_5\oplus P_1}(x) \end{pmatrix}$}} \dd x,\]
which is non-negative since the matrix is positive semidefinite (here, we use that
$4s_{\DD}(C_3\oplus P_6\oplus C_3)s_{\DD}(C_5\oplus P_2\oplus C_5)\ge s_{\DD}(C_3\oplus P_4\oplus C_5)^2$).
We conclude that the deck $\DD$ is of Class III.
\end{proof}

\begin{lemma}
\label{lm:deck12-det0+}
Let $\DD$ be a $12$-deck such that its $10$-deck is of Class III, $s_{\DD}(P_2)>0$, $s_{\DD}(C_3\oplus C_3)=s_{\DD}(C_3\oplus P_2\oplus C_3)=0$ and $s_{\DD}(C_3\cup C_3)>0$.
Further suppose that $s_{\DD}(C_3\oplus P_4\oplus C_3)=s_{\DD}(C_3\oplus P_2\oplus C_5)=0$ and $s_{\DD}(C_5\oplus C_5)>0$.
If $s_{\DD}(C_3\oplus C_9)>0$ or $s_{\DD}(C_3\oplus P_2\oplus C_7)>0$, then $\DD$ is of Class II.
Otherwise, $\DD$ is of Class I if $s_{\DD}(C_{12})>0$ and of Class III if $s_{\DD}(C_{12})=0$.
\end{lemma}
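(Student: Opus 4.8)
The plan is to follow the template of Lemmas~\ref{lm:deck12-sC3C3}--\ref{lm:deck12-det00}. First I would record the structural consequences of the hypotheses. Since the $10$-deck of $\DD$ is of Class~III with $s_{\DD}(C_3\oplus C_3)=0$ and $s_{\DD}(C_3\cup C_3)>0$, Corollary~\ref{cor:deck10-III-classify} places $\DD$ in its last case, giving $s_{\DD}(C_{\ell})=0$ for $\ell\in\{4,6,8,10\}$, $s_{\DD}(C_3\oplus C_5)=s_{\DD}(C_3\oplus C_7)=0$, and $4\,s_{\DD}(C_3\oplus P_4\oplus C_3)\,s_{\DD}(C_5\oplus C_5)\ge s_{\DD}(C_3\oplus P_2\oplus C_5)^2$; combined with the present hypotheses $s_{\DD}(C_3\oplus P_4\oplus C_3)=0$ and $s_{\DD}(C_5\oplus C_5)>0$, this forces $s_{\DD}(C_3\oplus P_2\oplus C_5)=0$. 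Next I would determine which principal graphs of at most twelve edges may have positive multiplicity in $\DD$: any such graph contains none of $C_4,C_6,C_8,C_{10},C_3\oplus C_3,C_3\oplus C_5,C_3\oplus C_7,C_3\oplus P_2\oplus C_3,C_3\oplus P_4\oplus C_3,C_3\oplus P_2\oplus C_5$, and running through the admissible block structures one finds that the twelve-edge principal graphs that survive are $C_{12}$, $C_3\oplus C_9$, $C_3\oplus P_2\oplus C_7$, $C_3\oplus P_6\oplus C_3$, $C_3\oplus P_4\oplus C_5$, $C_5\oplus P_2\oplus C_5$, $C_5\oplus C_7$, and every graph having a component isomorphic to $C_3$ or to $C_5$.

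For the Class~II direction I would build the witnessing kernels via Lemma~\ref{lm:core} with $k=11$, $m=1$ and all $\gamma_{\ell}=0$, reading off densities from Lemma~\ref{lm:core-apply}. If $s_{\DD}(C_3\oplus C_9)>0$, take $\sigma_1=0$, $\tau_{1,3}=1$, $\tau_{1,9}=-1$, the remaining $\tau_{1,\ell}=0$, and $\delta$ small enough that $s_{\DD}(C_{12})\,\delta\le 1/2$; then $t^{C_{\ell}}_U=\tau_{1,\ell}f_1$, every odd-cycle density vanishes, $t(C_3\oplus C_9,U)=\tau_{1,3}\tau_{1,9}=-1$, and each remaining surviving principal twelve-edge graph has density $0$ (it has a $C_3$- or $C_5$-component, or involves $C_5$ or $C_7$ as a block, or is $C_3\oplus P_6\oplus C_3$ whose density is $\sigma_1^6=0$), so $c^U_{\DD,2}=\cdots=c^U_{\DD,10}=0$ while $c^U_{\DD,12}\le s_{\DD}(C_{12})\,\delta-s_{\DD}(C_3\oplus C_9)\le-1/2$. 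If instead $s_{\DD}(C_3\oplus P_2\oplus C_7)>0$, take $\sigma_1=\sqrt{\delta}$ (so $\sigma_1^{12}\le\delta/2$ for small $\delta$), $\tau_{1,3}=1$, $\tau_{1,7}=-2/\delta$, and the remaining $\tau_{1,\ell}=0$; then $t(C_3\oplus P_2\oplus C_7,U)=\sigma_1^2\tau_{1,3}\tau_{1,7}=-2$, $t(C_3\oplus P_6\oplus C_3,U)=\sigma_1^6\tau_{1,3}^2=\delta^3$, and every other surviving twelve-edge density vanishes as before, so $c^U_{\DD,2}=\cdots=c^U_{\DD,10}=0$ and $c^U_{\DD,12}\le s_{\DD}(C_{12})\,\delta+s_{\DD}(C_3\oplus P_6\oplus C_3)\,\delta^3-2\,s_{\DD}(C_3\oplus P_2\oplus C_7)$, which is negative once $\delta$ is small enough. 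In both cases $\DD$ is of Class~II.

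For the remaining direction I assume $s_{\DD}(C_3\oplus C_9)=s_{\DD}(C_3\oplus P_2\oplus C_7)=0$. As in the proofs of Lemmas~\ref{lm:deck8-I} and~\ref{lm:deck10-I}, it suffices to treat the case $s_{\DD}(C_{12})=0$ and show $\DD$ is of Class~III; Lemma~\ref{lm:core-III} applied with $k=12$ supplies a non-zero kernel annihilating $c^U_{\DD,2},\ldots,c^U_{\DD,12}$, so it only remains to check that for every non-zero kernel $U$ the first non-zero coefficient among $c^U_{\DD,2},\ldots,c^U_{\DD,12}$, if it exists, is positive. Let $U$ be non-zero; by Proposition~\ref{prop:coeff2} we may assume $U$ balanced, whence $t(H,U)=0$ for every $H$ with a vertex of degree one by Proposition~\ref{prop:degreeone}, so only principal subgraphs contribute. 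Then $c^U_{\DD,4}=0$; $c^U_{\DD,6}=s_{\DD}(C_3\cup C_3)\,t(C_3,U)^2\ge 0$, with equality forcing $t(C_3,U)=0$, in which case $c^U_{\DD,8}=0$ automatically; and then $c^U_{\DD,10}=s_{\DD}(C_5\oplus C_5)\int_{[0,1]}t^{C_5}_U(x)^2\dd x+s_{\DD}(C_5\cup C_5)\,t(C_5,U)^2\ge 0$ (all other principal ten-edge graphs are forbidden or have a $C_3$-component), with equality forcing $t^{C_5}_U(x)=0$ for almost every $x\in[0,1]$ --- the crucial point where $s_{\DD}(C_5\oplus C_5)>0$ is used. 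When that holds, $t(C_5,U)=0$ and $U^j t^{C_5}_U$ vanishes almost everywhere for every $j\ge 0$, so $t(C_5\oplus C_7,U)=t(C_5\cup C_7,U)=t(C_3\oplus P_4\oplus C_5,U)=t(C_5\oplus P_2\oplus C_5,U)=0$, every surviving twelve-edge graph with a $C_3$- or $C_5$-component has density $0$, and $t(C_3\oplus P_6\oplus C_3,U)=\int_{[0,1]}\bigl(U^3 t^{C_3}_U\bigr)(x)^2\dd x\ge 0$; hence $c^U_{\DD,12}\ge 0$. This proves that the first non-zero coefficient, if any, is positive, so $\DD$ is of Class~III, and when $s_{\DD}(C_{12})>0$ one obtains Class~I exactly as in Lemmas~\ref{lm:deck8-I} and~\ref{lm:deck10-I} via Proposition~\ref{prop:cycle}.

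The main obstacle I anticipate is the enumeration in the first paragraph: one must verify that the list of surviving twelve-edge principal graphs is exhaustive --- in particular that no triangle can sit at path-distance $2$ or $4$ from another short cycle, nor at path-distance $2$ from a pentagon or heptagon, without creating a forbidden subgraph --- and one must notice that $C_5\oplus C_7$ and $C_5\cup C_7$, which are absent from the Class~II triggers, are eliminated not by a forbidden-subgraph argument (their multiplicity may well be positive) but only through the observation that balancedness together with $c^U_{\DD,10}=0$ forces $t^{C_5}_U$ to vanish almost everywhere; this is precisely the feature distinguishing the present situation from Lemma~\ref{lm:deck12-det00}. The rest is routine bookkeeping with Lemmas~\ref{lm:core}, \ref{lm:core-apply}, \ref{lm:core-III} and Propositions~\ref{prop:cycle}, \ref{prop:degreeone}, \ref{prop:coeff2}.
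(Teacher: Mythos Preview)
Your proof is correct and follows essentially the same approach as the paper's: the Class~III/I analysis is identical, and for the Class~II direction the only difference is that you handle the two triggers $s_{\DD}(C_3\oplus C_9)>0$ and $s_{\DD}(C_3\oplus P_2\oplus C_7)>0$ with two separate kernels (one with $\sigma_1=0$, one with $\sigma_1=\sqrt{\delta}$), whereas the paper reuses the single kernel from Lemma~\ref{lm:deck12-det00} with $\sigma_1=\sqrt{\delta}$ and both $\tau_{1,7},\tau_{1,9}$ set to the same large negative value, disposing of both cases at once.
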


\begin{proof}
Fix a $12$-deck $\DD$ with the properties as supposed in the statement of the lemma.
Note that $s_{\DD}(C_{\ell})=0$ for $\ell=4,6,8,10$ and
$s_{\DD}(C_3\oplus C_{\ell})=0$ for $\ell=5,7$ by Corollary~\ref{cor:deck10-III-classify}.
We first show that if $s_{\DD}(C_3\oplus C_9)>0$ or $s_{\DD}(C_3\oplus P_2\oplus C_7)>0$, then $\DD$ is of Class II.
We use the same application of Lemma~\ref{lm:core} as we did in the proof of Lemma~\ref{lm:deck12-det00} with one of the first two conditions, that is
we apply Lemma~\ref{lm:core} with $k=11$, $m=1$, any $\delta\in (0,1/2)$ such that $s_{\DD}(C_{12})\delta\le 1/2$,
$\gamma_3=\gamma_5=\gamma_7=\gamma_9=\gamma_{11}=0$, $\sigma_1=\sqrt{\delta}$ (note that $\sigma_1^{12} \le \delta/2$),
$\tau_{1,3}=1$, $\tau_{1,7}=\tau_{1,9}=-(1+s_{\DD}(C_{12})+s_{\DD}(C_3\oplus P_6\oplus C_3))\delta^{-1}$ and $\tau_{1,5}=\tau_{1,11}=0$.
The same calculations prove that the $12$-deck $\DD$ is of Class II
if $s_{\DD}(C_3\oplus C_9)>0$ or $s_{\DD}(C_3\oplus P_2\oplus C_7)>0$.

We now prove that if $s_{\DD}(C_3\oplus C_9)=0$ and $s_{\DD}(C_3\oplus P_2\oplus C_7)=0$,
then the deck $\DD$ satisfying the assumptions of the lemma is of Class I or Class III.
Following the line of arguments presented in the proofs of Lemmas~\ref{lm:deck8-I} and~\ref{lm:deck10-I},
it is enough to establish that the deck $\DD$ is of Class III when $s_{\DD}(C_{12})=0$;
note that if $s_{\DD}(C_{12})=0$, then the deck $\DD$ is not of Class I by Lemma~\ref{lm:core-III}.

Let $U$ be an arbitrary non-zero kernel.
The coefficient $c^U_{\DD,2}$ is positive unless $U$ is balanced.
Hence, we can assume that $U$ is balanced.
It follows that $c^U_{\DD,4}=0$ and $c^U_{\DD,6}=s_{\DD}(C_3\cup C_3)t(C_3,U)^2$.
In particular, either $c^U_{\DD,6}$ is positive or $t(C_3,U)=0$;
we focus on the latter case in the rest of the proof.
Observe that every principal graph $H$ with at most ten edges satisfies that $s_{\DD}(H)=0$ or $t(H,U)=0$
unless $H$ is $C_5\cup C_5$ or $C_5\oplus C_5$.
It follows that $c^U_{\DD,8}=0$ and $c^U_{\DD,10}\ge s_{\DD}(C_5\oplus C_5)t(C_5\oplus C_5,U)$.
In particular, the coefficient $c^U_{\DD,10}$ is positive unless $t^{C_5}_U(x)=0$ for almost every $x\in [0,1]$.
If $t^{C_5}_U(x)=0$ for almost every $x\in [0,1]$,
then $s_{\DD}(H)=0$ or $t(H,U)=0$ for every $12$-edge principal graph $H$ different from $C_3\oplus P_6\oplus C_3$.
It follows that if $c^U_{\DD,10}=0$, then $c^U_{\DD,12}=s_{\DD}(C_3\oplus P_6\oplus C_3)t(C_3\oplus P_6\oplus C_3,U)\ge 0$.
We conclude that the $12$-deck $\DD$ is of Class III.
\end{proof}

\begin{lemma}
\label{lm:deck12-det+0}
Let $\DD$ be a $12$-deck such that its $10$-deck is of Class III, $s_{\DD}(P_2)>0$, $s_{\DD}(C_3\oplus C_3)=s_{\DD}(C_3\oplus P_2\oplus C_3)=0$ and $s_{\DD}(C_3\cup C_3)>0$.
Further suppose that $s_{\DD}(C_5\oplus C_5)=s_{\DD}(C_3\oplus P_2\oplus C_5)=0$ and $s_{\DD}(C_3\oplus P_4\oplus C_3)>0$.
If 
\begin{itemize}
\item $s_{\DD}(C_3\oplus C_9)>0$,
\item $s_{\DD}(C_5\oplus C_7)>0$, or
\item $s_{\DD}(C_5\cup C_5)=0$ and $s_{\DD}(C_5\cup C_7)>0$,
\end{itemize}
then $\DD$ is of Class II.
Otherwise, $\DD$ is of Class I if $s_{\DD}(C_{12})>0$ and of Class III if $s_{\DD}(C_{12})=0$.
\end{lemma}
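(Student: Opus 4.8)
The plan is to follow the template of Lemmas~\ref{lm:deck12-sC3C3}--\ref{lm:deck12-det0+}. The first step is to record, via Corollary~\ref{cor:deck10-III-classify}, that $s_{\DD}(C_{\ell})=0$ for even $\ell\le 10$ and $s_{\DD}(C_3\oplus C_{\ell})=0$ for $\ell\in\{5,7\}$, and to combine this with the standing hypotheses ($s_{\DD}(C_3\oplus C_3)=s_{\DD}(C_3\oplus P_2\oplus C_3)=s_{\DD}(C_5\oplus C_5)=s_{\DD}(C_3\oplus P_2\oplus C_5)=0$ and $s_{\DD}(C_3\cup C_3),s_{\DD}(C_3\oplus P_4\oplus C_3)>0$) to determine which principal graphs on at most twelve edges (see Figure~\ref{fig:princ12}) can occur with positive multiplicity in $\DD$. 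The bookkeeping observation I would isolate is: every twelve-edge principal graph containing a triangle block either contains $C_3\oplus C_j$ for some $j\in\{3,5,7,9\}$ (so has multiplicity zero in $\DD$), or has a triangle end-block attached to the rest of the graph through a bridge; and the only twelve-edge principal graphs without a triangle block that can have positive multiplicity are $C_{12}$, $C_5\cup C_7$ and $C_5\oplus P_2\oplus C_5$.

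For the Class~II direction I would handle the three listed cases just as in the earlier lemmas, using Lemma~\ref{lm:core} with $k=11$ together with Lemma~\ref{lm:core-apply}. If $s_{\DD}(C_3\oplus C_9)>0$, apply Lemma~\ref{lm:core} with $m=1$, $\sigma_1=0$, $\tau_{1,3}=1$, $\tau_{1,9}=-1$, all remaining $\tau_{1,\ell}$ and all $\gamma_\ell$ equal to $0$, and $\delta$ small enough that $s_{\DD}(C_{12})\delta\le 1/2$; if $s_{\DD}(C_5\oplus C_7)>0$, use instead $\tau_{1,5}=1$, $\tau_{1,7}=-1$; if $s_{\DD}(C_5\cup C_5)=0$ and $s_{\DD}(C_5\cup C_7)>0$, use $m=0$, $\gamma_5=1$, $\gamma_7=-1$. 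In each case Lemma~\ref{lm:core-apply} gives that $t(H,U)=0$ for every principal graph $H$ on at most twelve edges except for even cycles and the single graph being exploited, so $c^U_{\DD,\ell}=0$ for $\ell\le 10$ and $c^U_{\DD,12}=s_{\DD}(C_{12})\,t(C_{12},U)+s_{\DD}(H)\,t(H,U)\le 1/2-1<0$, whence $\DD$ is of Class~II.

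For the remaining (``otherwise'') case I would argue as in the proofs of Lemmas~\ref{lm:deck8-I} and~\ref{lm:deck10-I}: it suffices to show that $\DD$ is of Class~III when $s_{\DD}(C_{12})=0$ (if $s_{\DD}(C_{12})>0$, the Class~I conclusion follows by Proposition~\ref{prop:cycle} after deleting the copies of $C_{12}$, and if $s_{\DD}(C_{12})=0$ then $\DD$ is not of Class~I by Lemma~\ref{lm:core-III}). Let $U$ be an arbitrary non-zero kernel; by Proposition~\ref{prop:coeff2} we may assume $U$ is balanced, so $c^U_{\DD,4}=0$. Since $s_{\DD}(C_3\cup C_3)>0$, the coefficient $c^U_{\DD,6}=s_{\DD}(C_3\cup C_3)t(C_3,U)^2$ is positive unless $t(C_3,U)=0$, which we may therefore assume; then $c^U_{\DD,8}=0$, and, using $s_{\DD}(C_3\oplus P_4\oplus C_3)>0$, the only possibly non-zero terms of $c^U_{\DD,10}$ are $s_{\DD}(C_3\oplus P_4\oplus C_3)\int_{[0,1]}t^{C_3\oplus P_2}_U(x)^2\dd x$ and (only if $s_{\DD}(C_5\cup C_5)>0$) $s_{\DD}(C_5\cup C_5)t(C_5,U)^2$, both non-negative. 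Hence $c^U_{\DD,10}=0$ forces $t^{C_3\oplus P_2}_U\equiv 0$, and also $t(C_5,U)=0$ if $s_{\DD}(C_5\cup C_5)>0$; in the complementary subcase the hypothesis of the lemma gives $s_{\DD}(C_5\cup C_7)=0$. Using that $U$ is self-adjoint on $L_2[0,1]$ (as in the proof of Proposition~\ref{prop:balanced}) together with the Cauchy--Schwarz inequality, $\|Ut^{C_3}_U\|_2^4=\langle U^2t^{C_3}_U,\,t^{C_3}_U\rangle^2\le\|U^2t^{C_3}_U\|_2^2\,\|t^{C_3}_U\|_2^2=0$, so $t^{C_3\oplus P_1}_U=Ut^{C_3}_U\equiv 0$ as well. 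With $t(C_3,U)=0$ and $t^{C_3\oplus P_1}_U\equiv 0$, every twelve-edge principal graph with a triangle block has density zero in $U$ (a triangle end-block attached by a bridge contributes a factor $t^{C_3\oplus P_1}_U$, and one touching another cycle has multiplicity zero), and $C_5\cup C_7$ contributes $0$ (either $s_{\DD}(C_5\cup C_7)=0$ or $t(C_5,U)=0$); hence $c^U_{\DD,12}=s_{\DD}(C_5\oplus P_2\oplus C_5)\int_{[0,1]}t^{C_5\oplus P_1}_U(x)^2\dd x\ge 0$. Since Lemma~\ref{lm:core-III} (with $k=12$) provides a non-zero kernel making all of $c^U_{\DD,2},\dots,c^U_{\DD,12}$ vanish, $\DD$ is of Class~III.

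The main difficulty I anticipate is the case analysis hidden in the third paragraph: one must verify completely that, under all the hypotheses together with the derived identities $t(C_3,U)=0$ and $t^{C_3\oplus P_1}_U\equiv 0$ (and $t(C_5,U)=0$ in one subcase), every principal graph on twelve edges that occurs with positive multiplicity in $\DD$ contributes non-negatively to $c^U_{\DD,12}$, with $C_5\oplus P_2\oplus C_5$ the only graph contributing at all. This rests on a careful (but finite) enumeration of the twelve-edge principal graphs built from odd cycles and bridges, and the non-routine analytic ingredient is the self-adjointness upgrade from $t^{C_3\oplus P_2}_U$ to $t^{C_3\oplus P_1}_U$, which is what makes graphs such as $C_5\cup(C_3\oplus P_1\oplus C_3)$ drop out.
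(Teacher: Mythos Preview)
Your proof is correct and follows essentially the same approach as the paper. The only notable differences are cosmetic: you construct three separate kernels for the Class~II cases (one for each bullet), whereas the paper handles $s_{\DD}(C_3\oplus C_9)>0$ and $s_{\DD}(C_5\oplus C_7)>0$ simultaneously with the single choice $\tau_{1,3}=\tau_{1,5}=1$, $\tau_{1,7}=\tau_{1,9}=-1$; and you spell out the self-adjointness/Cauchy--Schwarz step for upgrading $t^{C_3\oplus P_2}_U\equiv 0$ to $t^{C_3\oplus P_1}_U\equiv 0$, which the paper asserts in one line. One small slip in your opening ``bookkeeping observation'': the graph $C_5\oplus C_7$ (no triangle block) is omitted from your list of possible twelve-edge principals, and the parenthetical ``so has multiplicity zero'' does not yet apply to $C_3\oplus C_9$ at that stage---but your actual argument treats both graphs correctly, so this does not affect the proof.
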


\begin{proof}
Fix a $12$-deck $\DD$ with the properties as supposed in the statement of the lemma.
Note that $s_{\DD}(C_{\ell})=0$ for $\ell=4,6,8,10$ and
$s_{\DD}(C_3\oplus C_{\ell})=0$ for $\ell=5,7$ by Corollary~\ref{cor:deck10-III-classify}.
We first show that if $s_{\DD}(C_3\oplus C_9)>0$ or $s_{\DD}(C_5\oplus C_7)>0$, then $\DD$ is of Class II.
Apply Lemma~\ref{lm:core} with $k=11$, $m=1$, any $\delta\in (0,1)$ such that $s_{\DD}(C_{12})\delta\le 1/2$,
$\gamma_3=\gamma_5=\gamma_7=\gamma_9=\gamma_{11}=0$, $\sigma_1=0$, $\tau_{1,3}=\tau_{1,5}=1$, $\tau_{1,7}=\tau_{1,9}=-1$ and $\tau_{1,11}=0$,
to get a non-zero kernel $U$ with the properties listed in Lemma~\ref{lm:core}.
Every principal graph $H$ with at most twelve edges satisfies that $s_{\DD}(H)=0$ or $t(H,U)=0$
unless $H$ is $C_{12}$, $C_3\oplus C_9$ or $C_5\oplus C_7$.
Hence, it holds that $c^U_{\DD,2}=\cdots=c^U_{\DD,10}=0$ and $c^U_{\DD,12}\le -1/2$,
which yields that the $12$-deck $\DD$ is of Class II.

We next show that if $s_{\DD}(C_5\cup C_5)=0$ and $s_{\DD}(C_5\cup C_7)>0$, then $\DD$ is also of Class II.
Apply Lemma~\ref{lm:core} with $k=11$, $m=0$, any $\delta\in (0,1)$ such that $s_{\DD}(C_{12})\delta\le 1/2$,
$\gamma_5=1$, $\gamma_7=-1$, $\gamma_3=\gamma_9=\gamma_{11}=0$,
to get a non-zero kernel $U$ with the properties listed in Lemma~\ref{lm:core}.
Every principal graph $H$ with at most twelve edges satisfies that $s_{\DD}(H)=0$ or $t(H,U)=0$
unless $H$ is $C_{12}$ or $C_5\cup C_7$.
It follows that $c^U_{\DD,2}=\cdots=c^U_{\DD,10}=0$ and $c^U_{\DD,12}\le -1/2$ and
we again conclude that the $12$-deck $\DD$ is of Class II.

We now prove that if the deck $\DD$ does not satisfy any of the three conditions in the statement of the lemma,
then it is of Class I or Class III.
Following the line of arguments presented in the proofs of Lemmas~\ref{lm:deck8-I} and~\ref{lm:deck10-I},
it is enough to establish that the deck $\DD$ is of Class III when $s_{\DD}(C_{12})=0$;
note that if $s_{\DD}(C_{12})=0$, then the deck $\DD$ is not of Class I by Lemma~\ref{lm:core-III}.

Let $U$ be an arbitrary non-zero kernel.
The coefficient $c^U_{\DD,2}$ is positive unless $U$ is balanced.
So, we can assume that $U$ is balanced.
It follows that $c^U_{\DD,4}=0$ and $c^U_{\DD,6}=s_{\DD}(C_3\cup C_3)t(C_3\cup C_3,U)\ge 0$ and
the equality holds only if $t(C_3,U)=0$.
Hence, we assume that $t(C_3,U)=0$ in the rest of the proof.
It follows that $c^U_{\DD,8}=0$ and
\[c^U_{\DD,10}=s_{\DD}(C_5\cup C_5)t(C_5\cup C_5,U)+s_{\DD}(C_3\oplus P_4\oplus C_3)t(C_3\oplus P_4\oplus C_3,U).\]
In particular, $c^U_{\DD,10}$ is non-negative and
if $c^U_{\DD,10}=0$, then $t^{C_3\oplus P_2}_U(x)=0$ for almost every $x\in [0,1]$ and
either $s_{\DD}(C_5\cup C_5)=0$ or $t(C_5,U)=0$ or both.
Since $t^{C_3\oplus P_2}_U=U^2t^{C_3}_U$ and $t^{C_3\oplus P_1}_U=Ut^{C_3}_U$,
it follows that $t^{C_3\oplus P_1}_U(x)=0$ for almost every $x\in [0,1]$.
Hence, if $c^U_{\DD,10}=0$ and none of the three conditions in the statement of the lemma applies,
we get that $s_{\DD}(H)=0$ or $t(H,U)=0$ for every $12$-edge principal graph $H$ unless $H$ is $C_5\oplus P_2\oplus C_5$.
It follows that $c^U_{\DD,12}\ge 0$ and so the $12$-deck $\DD$ is of Class III.
\end{proof}

The final two lemmas analyze the case when
the quantity $4s_{\DD}(C_3\oplus P_4\oplus C_3)s_{\DD}(C_5\oplus C_5)-s_{\DD}(C_3\oplus P_2\oplus C_5)^2$ is non-negative and
both $s_{\DD}(C_3\oplus P_4\oplus C_3)$ and $s_{\DD}(C_5\oplus C_5)$ are positive.

\begin{lemma}
\label{lm:deck12-det++}
Let $\DD$ be a $12$-deck such that its $10$-deck is of Class III, $s_{\DD}(P_2)>0$, $s_{\DD}(C_3\oplus C_3)=s_{\DD}(C_3\oplus P_2\oplus C_3)=0$ and $s_{\DD}(C_3\cup C_3)>0$.
Further suppose that $s_{\DD}(C_3\oplus P_4\oplus C_3)>0$, $s_{\DD}(C_5\oplus C_5)>0$ and 
$4s_{\DD}(C_3\oplus P_4\oplus C_3)s_{\DD}(C_5\oplus C_5)=s_{\DD}(C_3\oplus P_2\oplus C_5)^2$.
If 
\begin{itemize}
\item $s_{\DD}(C_3\oplus C_9)>0$,
\item $A=\mbox{\scalebox{0.70}{$
    \begin{pmatrix} -2s_{\DD}(C_5\oplus C_5) \\ s_{\DD}(C_3\oplus P_2\oplus C_5) \end{pmatrix}^T
    \begin{pmatrix} s_{\DD}(C_3\oplus P_6\oplus C_3) & \frac{s_{\DD}(C_3\oplus P_4\oplus C_5)}{2} \\ \frac{s_{\DD}(C_3\oplus P_4\oplus C_5)}{2} & s_{\DD}(C_5\oplus P_2\oplus C_5) \end{pmatrix}
    \begin{pmatrix} -2s_{\DD}(C_5\oplus C_5) \\ s_{\DD}(C_3\oplus P_2\oplus C_5) \end{pmatrix}$}}
      <0$, or
\item $-2s_{\DD}(C_3\oplus P_2\oplus C_7)s_{\DD}(C_5\oplus C_5)+s_{\DD}(C_5\oplus C_7)s_{\DD}(C_3\oplus P_2\oplus C_5)\not=0$,
\end{itemize}
then $\DD$ is of Class II.
Otherwise, $\DD$ is of Class I if $s_{\DD}(C_{12})>0$ and of Class III if $s_{\DD}(C_{12})=0$.
\end{lemma}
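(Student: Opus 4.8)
\emph{Setup and bookkeeping.} Write $s_{33}=s_{\DD}(C_3\oplus P_4\oplus C_3)$, $s_{35}=s_{\DD}(C_3\oplus P_2\oplus C_5)$, $s_{55}=s_{\DD}(C_5\oplus C_5)$, so the hypothesis reads $4s_{33}s_{55}=s_{35}^2$ with $s_{33},s_{55}>0$; hence $s_{35}>0$ and the matrix $M_{10}=\left(\begin{smallmatrix}s_{33}&s_{35}/2\\ s_{35}/2&s_{55}\end{smallmatrix}\right)$ is positive semidefinite of rank one, with kernel spanned by $v=(-2s_{55},s_{35})$; write $M_{12}=\left(\begin{smallmatrix}s_{\DD}(C_3\oplus P_6\oplus C_3)& s_{\DD}(C_3\oplus P_4\oplus C_5)/2\\ s_{\DD}(C_3\oplus P_4\oplus C_5)/2& s_{\DD}(C_5\oplus P_2\oplus C_5)\end{smallmatrix}\right)$, so that $A=v^TM_{12}v$ and $B:=-2s_{\DD}(C_3\oplus P_2\oplus C_7)s_{55}+s_{\DD}(C_5\oplus C_7)s_{35}$ is the quantity in the third condition. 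The plan follows the template of Lemmas~\ref{lm:deck12-sC3C3}--\ref{lm:deck12-det+0}: first invoke Corollary~\ref{cor:deck10-III-classify} to record $s_{\DD}(C_\ell)=0$ for $\ell\in\{4,6,8,10\}$ and $s_{\DD}(C_3\oplus C_3)=s_{\DD}(C_3\oplus C_5)=s_{\DD}(C_3\oplus C_7)=s_{\DD}(C_3\oplus P_2\oplus C_3)=0$, and combine this with the hypotheses to list the principal graphs of size at most $12$ with possibly positive multiplicity in $\DD$; the relevant $12$-edge ones are $C_{12}$, $C_3\oplus C_9$, $C_3\cup C_9$, $C_5\oplus C_7$, $C_5\cup C_7$, $C_3\oplus P_6\oplus C_3$, $C_3\oplus P_4\oplus C_5$, $C_3\oplus P_2\oplus C_7$, $C_5\oplus P_2\oplus C_5$, together with disconnected graphs having a triangle component.

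\emph{Class II direction.} For each of the three conditions I would produce, via Lemma~\ref{lm:core} with $k=11$ and $\delta\in(0,1)$ chosen with $s_{\DD}(C_{12})\delta\le1/2$, a balanced kernel $U$ with $c^U_{\DD,2}=\cdots=c^U_{\DD,10}=0$ and $c^U_{\DD,12}\le-1/2$, using Lemma~\ref{lm:core-apply} to evaluate densities. If $s_{\DD}(C_3\oplus C_9)>0$, take $m=1$, $\sigma_1=0$, $\tau_{1,3}=1$, $\tau_{1,9}=-1$, and all remaining parameters zero: then $t(C_3\oplus C_9,U)=-1$ and $t(C_{12},U)\le\delta$, while every other relevant density vanishes (using $t(C_3,U)=0$ and that odd powers of the step function $f_1$ from the proof of Lemma~\ref{lm:core} integrate to $0$), so $c^U_{\DD,12}\le-1+1/2$. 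If $A<0$, take $m=1$, $\sigma_1=\delta/2$, and choose $\tau_{1,3},\tau_{1,5}$ so that $t^{C_3\oplus P_3}_U$ and $t^{C_5\oplus P_1}_U$ equal a large common multiple of $v\,f_1$ (which forces $(t^{C_3\oplus P_2}_U,t^{C_5}_U)\parallel v$, hence $c^U_{\DD,10}=0$ by the rank-one identity for $M_{10}$): then $c^U_{\DD,12}=s_{\DD}(C_{12})t(C_{12},U)+(v^TM_{12}v)\cdot(\text{scale})^2\le-1/2$ since $v^TM_{12}v=A<0$. If $B\ne0$, take $m=1$ with $\sigma_1$ small, $\tau_{1,5}=-s_{35}\sigma_1^2\tau_{1,3}/(2s_{55})$ (again forcing $c^U_{\DD,10}=0$), and $\tau_{1,7}$ free with sign chosen so that the combined $C_3\oplus P_2\oplus C_7$ and $C_5\oplus C_7$ contribution, which collapses to $-\dfrac{\sigma_1^2\tau_{1,3}\tau_{1,7}}{2s_{55}}B$, is negative; taking $|\tau_{1,7}|$ large drives $c^U_{\DD,12}<0$ while the bounded ``path'' terms ($\propto\sigma_1^6\tau_{1,3}^2$) and the $C_{12}$ term stay controlled.

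\emph{The ``otherwise'' direction.} Now suppose $s_{\DD}(C_3\oplus C_9)=0$, $A\ge0$ and $B=0$. As in Lemmas~\ref{lm:deck8-I} and~\ref{lm:deck10-I}, it suffices to show that for every non-zero kernel $U$ the first non-vanishing coefficient among $c^U_{\DD,2},\dots,c^U_{\DD,12}$ (if any) is positive: this gives Class~III when $s_{\DD}(C_{12})=0$ (not Class~I, by Lemma~\ref{lm:core-III}) and Class~I when $s_{\DD}(C_{12})>0$ (adding the $C_{12}$ term, positive by Proposition~\ref{prop:cycle}). If $U$ is not balanced, $c^U_{\DD,2}>0$ by Proposition~\ref{prop:coeff2}; so assume $U$ balanced, whence $c^U_{\DD,4}=0$ and $c^U_{\DD,6}=s_{\DD}(C_3\cup C_3)\,t(C_3,U)^2\ge0$, so assume $t(C_3,U)=0$; then $c^U_{\DD,8}=0$ and
\[c^U_{\DD,10}=\int_{[0,1]}\begin{pmatrix}t^{C_3\oplus P_2}_U(x)\\ t^{C_5}_U(x)\end{pmatrix}^{T}M_{10}\begin{pmatrix}t^{C_3\oplus P_2}_U(x)\\ t^{C_5}_U(x)\end{pmatrix}\dd x+s_{\DD}(C_5\cup C_5)\,t(C_5,U)^2\ge0,\]
so assume this is $0$. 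Since $M_{10}$ is rank-one and positive semidefinite, this forces $(t^{C_3\oplus P_2}_U(x),t^{C_5}_U(x))=\phi(x)\,v$ a.e.\ for some $\phi\in L_2[0,1]$; applying $U$ and using self-adjointness yields $t^{C_3\oplus P_3}_U=-2s_{55}\,U\phi$, $t^{C_5\oplus P_1}_U=s_{35}\,U\phi$, $t(C_3\oplus P_2\oplus C_7,U)=\langle U^2t^{C_3}_U,t^{C_7}_U\rangle=-2s_{55}\int\phi\,t^{C_7}_U$ and $t(C_5\oplus C_7,U)=s_{35}\int\phi\,t^{C_7}_U$. Expanding $c^U_{\DD,12}$: the two $C_7$ cross-terms combine to $B\int\phi\,t^{C_7}_U=0$; the $C_3\oplus P_6\oplus C_3$, $C_3\oplus P_4\oplus C_5$ and $C_5\oplus P_2\oplus C_5$ terms assemble into $(v^TM_{12}v)\int(U\phi)^2=A\int(U\phi)^2\ge0$; the $C_{12}$ term is $\ge0$ and the $C_3\cup C_9$ term equals $t(C_3,U)t(C_9,U)=0$; and the remaining disconnected terms (those with a triangle component, and $C_5\cup C_7$ via a sub-case on whether $s_{\DD}(C_5\cup C_5)=0$) contribute $0$ or a non-negative amount. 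Hence $c^U_{\DD,12}\ge0$, as required.

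\emph{Main obstacle.} The delicate part is the last step: one must carry out the full enumeration of $\le12$-edge principal graphs compatible with all the standing hypotheses and verify that, under the negation of the three conditions, \emph{every} term in $c^U_{\DD,12}$ is non-negative or cancels. The crux is organising the $C_7$-dependent densities so that they collapse exactly to the scalar $B$ (which then vanishes), recognising $A\ge0$ as precisely positive semidefiniteness of the surviving quadratic form restricted to the pinned direction $v$, and disposing of the disconnected contributions — in particular $C_5\cup C_7$, whose coefficient is not governed by any of the three named conditions and which must be ruled out by a separate structural sub-case on $s_{\DD}(C_5\cup C_5)$.
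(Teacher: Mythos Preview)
Your Class~II constructions are essentially correct and close to the paper's (the paper unifies the second and third bullets into a single application of Lemma~\ref{lm:core} with $\sigma_1=\delta^{1/4}$ and a carefully chosen $\tau_{1,7}$, but your separate treatments work too). The genuine gap is in the ``otherwise'' direction, precisely at what you yourself flag as the main obstacle.

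First, your enumeration of relevant $12$-edge principal graphs omits $C_5\cup (C_3\oplus P_1\oplus C_3)$, which appears in the paper's list (Figure~\ref{fig:deck12-det++}) and has no triangle as a \emph{component}; its density factors as $t(C_5,U)\cdot t(C_3\oplus P_1\oplus C_3,U)$, which can be negative. Second, and more importantly, your proposed ``structural sub-case on $s_{\DD}(C_5\cup C_5)$'' does not close: when $s_{\DD}(C_5\cup C_5)=0$ the vanishing of $c^U_{\DD,10}$ gives you no information about $t(C_5,U)$, and there is no deck-level implication forcing $s_{\DD}(C_5\cup C_7)=0$ or $s_{\DD}\bigl(C_5\cup (C_3\oplus P_1\oplus C_3)\bigr)=0$ --- the lemma is stated for arbitrary $12$-decks, not just decks of graphs, so no containment argument is available.

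The paper resolves this with Proposition~\ref{prop:balanced}, an ingredient you do not invoke. Once $c^U_{\DD,10}=0$ pins $t^{C_5}_U$ to a scalar multiple of $t^{C_3\oplus P_2}_U$ almost everywhere, note that $t^{C_3\oplus P_2}_U=U\bigl(Ut^{C_3}_U\bigr)$ lies in the range of the balanced operator $U$, whence $\int_{[0,1]} t^{C_3\oplus P_2}_U=0$ by Proposition~\ref{prop:balanced}; consequently $t(C_5,U)=\int_{[0,1]} t^{C_5}_U=0$ \emph{unconditionally}. This single observation kills $C_5\cup C_7$ and $C_5\cup (C_3\oplus P_1\oplus C_3)$ simultaneously, with no case split on $s_{\DD}(C_5\cup C_5)$, and the rest of your argument then goes through.
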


\begin{figure}
\begin{center}
\epsfbox{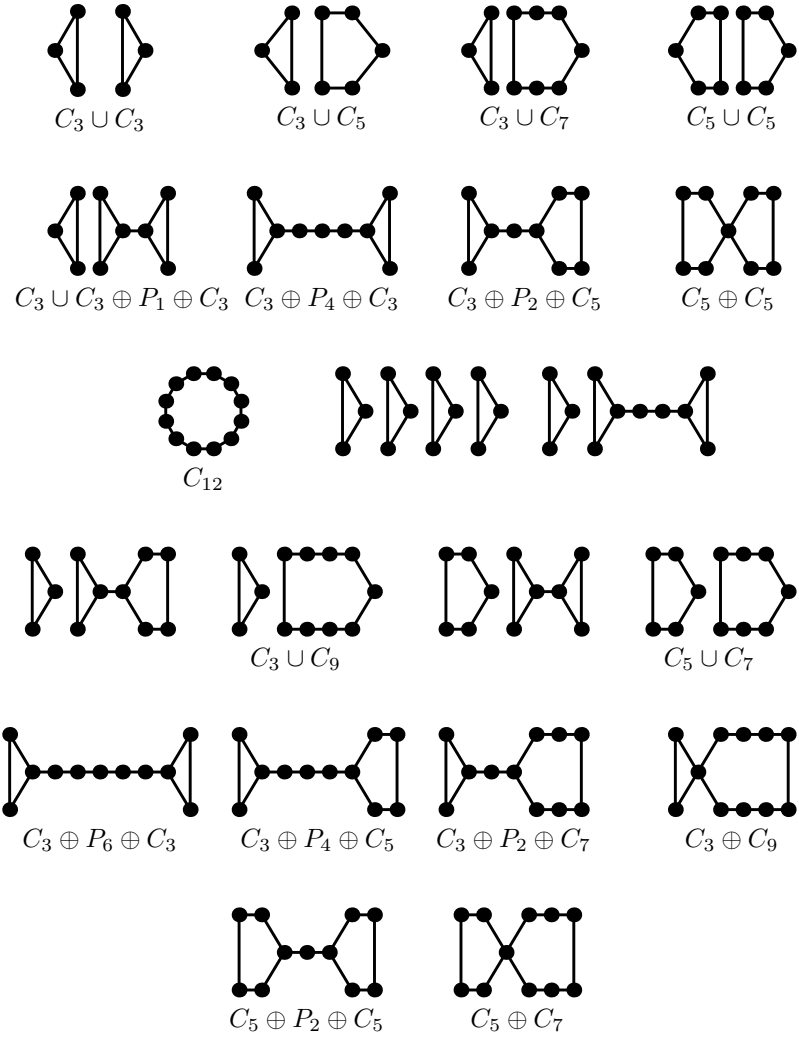}
\end{center}
\caption{The exceptional graphs in the proof of Lemma~\ref{lm:deck12-det++}.}
\label{fig:deck12-det++}
\end{figure}

\begin{proof}
Fix a $12$-deck $\DD$ with the properties as supposed in the statement of the lemma.
Note that $s_{\DD}(C_{\ell})=0$ for $\ell=4,6,8,10$ and
$s_{\DD}(C_3\oplus C_{\ell})=0$ for $\ell=5,7$ by Corollary~\ref{cor:deck10-III-classify}.
This together with the assumptions of the lemma implies that
$s_{\DD}(H)=0$ for all principal graphs $H$ with at most twelve edges with the following exceptions:
$C_3\cup C_3$, $C_3\cup C_5$, $C_3\cup C_7$, $C_5\cup C_5$,
$C_3\cup C_3\oplus P_1\oplus C_3$, 
$C_3\oplus P_4\oplus C_3$, $C_3\oplus P_2\oplus C_5$, $C_5\oplus C_5$,
$C_{12}$, $C_3\cup C_3\cup C_3\cup C_3$,
$C_3\cup C_3\oplus P_3\oplus C_3$, $C_3\cup C_3\oplus P_1\oplus C_5$, $C_3\cup C_9$, $C_5\cup C_3\oplus P_1\oplus C_3$,
$C_5\cup C_7$, $C_3\oplus P_6\oplus C_3$, $C_3\oplus P_4\oplus C_5$, $C_3\oplus P_2\oplus C_7$,
$C_3\oplus C_9$, $C_5\oplus P_2\oplus C_5$ and $C_5\oplus C_7$.
The graphs are depicted in Figure~\ref{fig:deck12-det++}.
Also note that the matrix 
\[\begin{pmatrix}
  s_{\DD}(C_3\oplus P_4\oplus C_3) & \frac{s_{\DD}(C_3\oplus P_2\oplus C_5)}{2} \\
  \frac{s_{\DD}(C_3\oplus P_2\oplus C_5)}{2} & s_{\DD}(C_5\oplus C_5)
  \end{pmatrix}\]
has rank one and so there exists a non-zero vector $(z_3,z_5)\in\RR^2$ such that
\[\begin{pmatrix}
  s_{\DD}(C_3\oplus P_4\oplus C_3) & \frac{s_{\DD}(C_3\oplus P_2\oplus C_5)}{2} \\
  \frac{s_{\DD}(C_3\oplus P_2\oplus C_5)}{2} & s_{\DD}(C_5\oplus C_5)
  \end{pmatrix}
  \begin{pmatrix} z_3 \\ z_5 \end{pmatrix}=
  \begin{pmatrix} 0 \\ 0\end{pmatrix}.\]
Note that the vectors $(z_3,z_5)$ and $\left(-s_{\DD}(C_5\oplus C_5),\frac{s_{\DD}(C_3\oplus P_2\oplus C_5)}{2}\right)$
are non-zero multiples of each other,
in particular, both $z_3\not=0$ and $z_5\not=0$.  
By multiplying $(z_3,z_5)$ by a suitable constant, we may assume without loss of generality that
\[\begin{pmatrix} z_3 \\ z_5 \end{pmatrix}^T
  \begin{pmatrix} s_{\DD}(C_3\oplus P_6\oplus C_3) & \frac{s_{\DD}(C_3\oplus P_4\oplus C_5)}{2} \\ \frac{s_{\DD}(C_3\oplus P_4\oplus C_5)}{2} & s_{\DD}(C_5\oplus P_2\oplus C_5) \end{pmatrix}
  \begin{pmatrix} z_3 \\ z_5 \end{pmatrix}\]
is equal to $-1$, $0$ or $+1$.  

We first show that if $s_{\DD}(C_3\oplus C_9)>0$, then $\DD$ is of Class II.
Apply Lemma~\ref{lm:core} with $k=11$, $m=1$, any $\delta\in (0,1)$ such that $s_{\DD}(C_{12})\delta\le 1/2$,
$\gamma_3=\gamma_5=\gamma_7=\gamma_9=\gamma_{11}=0$, $\sigma_1=0$, $\tau_{1,3}=1$, $\tau_{1,5}=\tau_{1,7}=\tau_{1,11}=0$ and $\tau_{1,9}=-1$
to get a non-zero kernel $U$ with the properties listed in Lemma~\ref{lm:core}.
Every principal graph $H$ with at most twelve edges satisfies that $s_{\DD}(H)=0$ or $t(H,U)=0$
unless $H$ is $C_{12}$ or $C_3\oplus C_9$.
Hence, it holds that $c^U_{\DD,2}=\cdots=c^U_{\DD,10}=0$ and $c^U_{\DD,12}\le -1/2$,
which yields that the $12$-deck $\DD$ is of Class II.

In the second and third cases described in the statement of the lemma,
we apply Lemma~\ref{lm:core} with $k=11$, $m=1$, any $\delta\in (0,1/2)$ such that $s_{\DD}(C_{12})\delta\le 1/2$,
$\gamma_3=\gamma_5=\gamma_7=\gamma_9=\gamma_{11}=0$, $\sigma_1=\delta^{1/4}$, $\tau_{1,3}=z_3/\delta^{1/2}$, $\tau_{1,5}=z_5$, $\tau_{1,9}=\tau_{1,11}=0$, and
$\tau_{1,7}=-2\delta^{1/2}(z_3s_{\DD}(C_3\oplus P_2\oplus C_7)+z_5s_{\DD}(C_5\oplus C_7))^{-1}$
if $z_3s_{\DD}(C_3\oplus P_2\oplus C_7)+z_5s_{\DD}(C_5\oplus C_7)$ is non-zero, and $\tau_{1,7}=0$, otherwise,
to get a non-zero kernel $U$ with the properties listed in Lemma~\ref{lm:core}.
Let $f_1$ be the eigenfunction of $U$ associated with $\sigma_1$.
Every principal graph $H$ with at most twelve edges satisfies that $s_{\DD}(H)=0$ or $t(H,U)=0$
unless $H$ is $C_3\oplus P_4\oplus C_3$, $C_3\oplus P_2\oplus C_5$, $C_5\oplus C_5$,
$C_{12}$, $C_3\oplus P_6\oplus C_3$, $C_3\oplus P_4\oplus C_5$, $C_5\oplus P_2\oplus C_5$,
$C_3\oplus P_2\oplus C_7$ and $C_5\oplus C_7$.
It follows that $c^U_{\DD,2}=\cdots=c^U_{\DD,8}=0$ and
\begin{align*}
 c^U_{\DD,10}&= \int_{[0,1]}
                \begin{pmatrix}t_U^{C_3\oplus P_2}(x) \\ t_U^{C_5}(x) \end{pmatrix}^T
                \begin{pmatrix}
                s_{\DD}(C_3\oplus P_4\oplus C_3) & \frac{s_{\DD}(C_3\oplus P_2\oplus C_5)}{2} \\
                \frac{s_{\DD}(C_3\oplus P_2\oplus C_5)}{2} & s_{\DD}(C_5\oplus C_5)
                \end{pmatrix}
                \begin{pmatrix}t_U^{C_3\oplus P_2}(x) \\ t_U^{C_5}(x) \end{pmatrix}\dd x\\
             &=\int_{[0,1]}
                \begin{pmatrix}z_3 \\ z_5 \end{pmatrix}^T
                \begin{pmatrix}
                s_{\DD}(C_3\oplus P_4\oplus C_3) & \frac{s_{\DD}(C_3\oplus P_2\oplus C_5)}{2} \\
                \frac{s_{\DD}(C_3\oplus P_2\oplus C_5)}{2} & s_{\DD}(C_5\oplus C_5)
                \end{pmatrix}
                \begin{pmatrix}z_3 \\ z_5 \end{pmatrix}f_1(x)^2\dd x=0.
\end{align*}
We next analyze the coefficient $c^U_{\DD,12}$.
First note that the sum of the terms $s_{\DD}(H)t(H,U)$ for $H$ being one of the graphs
$C_3\oplus P_6\oplus C_3$, $C_3\oplus P_4\oplus C_5$ and $C_5\oplus P_2\oplus C_5$
is equal to
\[B_1=\int_{[0,1]}
    \begin{pmatrix} z_3 \\ z_5 \end{pmatrix}^T
    \begin{pmatrix} s_{\DD}(C_3\oplus P_6\oplus C_3) & \frac{s_{\DD}(C_3\oplus P_4\oplus C_5)}{2} \\ \frac{s_{\DD}(C_3\oplus P_4\oplus C_5)}{2} & s_{\DD}(C_5\oplus P_2\oplus C_5) \end{pmatrix}
    \begin{pmatrix}z_3 \\ z_5 \end{pmatrix}\sigma_1^2 f_1(x)^2\dd x.\]
Observe that $B_1=\delta^{1/2}$ if $A>0$, $B_1=0$ if $A=0$ and $B_1=-\delta^{1/2}$ if $A<0$.
The sum of the terms $s_{\DD}(H)t(H,U)$ for $H$ being $C_3\oplus P_2\oplus C_7$ and $C_5\oplus C_7$
is equal to
\[B_2=\int_{[0,1]}(s_{\DD}(C_3\oplus P_2\oplus C_7)z_3+s_{\DD}(C_5\oplus C_7)z_5)\tau_{1,7} f_1(x)^2\dd x.\]
If $z_3s_{\DD}(C_3\oplus P_2\oplus C_7)+z_5s_{\DD}(C_5\oplus C_7)$ is non-zero, then $B_2=-2\delta^{1/2}$;
otherwise, $B_2=0$.
Since $s_{\DD}(C_{12})t(C_{12},U)\le \delta$,
it follows that $c^U_{\DD,12}\le B_1+B_2+\delta\le -\delta^{1/2}+\delta<0$ and so the $12$-deck $\DD$ is of Class II.

We now prove that if the deck $\DD$ does not satisfy any of the three conditions in the statement of the lemma,
then it is of Class I or Class III.
Following the line of arguments presented in the proofs of Lemmas~\ref{lm:deck8-I} and~\ref{lm:deck10-I},
it is enough to establish that the deck $\DD$ is of Class III when $s_{\DD}(C_{12})=0$;
note that if $s_{\DD}(C_{12})=0$, then the deck $\DD$ is not of Class I by Lemma~\ref{lm:core-III}.

Let $U$ be an arbitrary non-zero kernel.
The coefficient $c^U_{\DD,2}$ is positive unless $U$ is balanced.
So, we can assume that $U$ is balanced.
It follows that $c^U_{\DD,4}=0$ and $c^U_{\DD,6}=s_{\DD}(C_3\cup C_3)t(C_3\cup C_3,U)\ge 0$ and
the equality holds only if $t(C_3,U)=0$.
Hence, we assume that $t(C_3,U)=0$ in the rest of the proof.
This implies that $c^U_{\DD,8}=0$.
Observe next that
\begin{align*}		
  c^U_{\DD,10}&=s_{\DD}(C_5\cup C_5)t(C_5\cup C_5,U)\\
              &+\int_{[0,1]}
                \begin{pmatrix}t_U^{C_3\oplus P_2}(x) \\ t_U^{C_5}(x) \end{pmatrix}^T
                \begin{pmatrix}
                s_{\DD}(C_3\oplus P_4\oplus C_3) & \frac{s_{\DD}(C_3\oplus P_2\oplus C_5)}{2} \\
                \frac{s_{\DD}(C_3\oplus P_2\oplus C_5)}{2} & s_{\DD}(C_5\oplus C_5)
                \end{pmatrix}
                \begin{pmatrix}t_U^{C_3\oplus P_2}(x) \\ t_U^{C_5}(x) \end{pmatrix}\dd x.
\end{align*}		
It follows that $c^U_{\DD,10}\ge 0$ and the equality holds only if
$s_{\DD}(C_5\cup C_5)t(C_5\cup C_5,U)=0$ and
$t_U^{C_3\oplus P_2}(x)=\frac{z_3}{z_5}t_U^{C_5}(x)$ for almost every $x\in [0,1]$.
In the rest, we assume that $c^U_{\DD,10}=0$.

Since $c^U_{\DD,10}=0$, it holds that $t_U^{C_3\oplus P_2}(x)=\frac{z_3}{z_5}t_U^{C_5}(x)$ for almost every $x\in [0,1]$.
As $t_U^{C_3\oplus P_2}=U^2t_U^{C_3}$,
Proposition~\ref{prop:balanced} implies that the integral of $t_U^{C_5}$ over $[0,1]$ is zero,
i.e., $t(C_5,U)=0$.
It follows that $t(H,U)=0$ for every $12$-edge graph depicted in Figure~\ref{fig:deck12-det++} except for $C_{12}$,
$C_3\oplus P_6\oplus C_3$, $C_3\oplus P_4\oplus C_5$, $C_5\oplus P_2\oplus C_5$,
$C_3\oplus P_2\oplus C_7$ and $C_5\oplus C_7$.
Since it holds that $A\ge 0$,
i.e., the second condition of the lemma does not apply, and
the vectors $(z_3,z_5)$ and $\left(-s_{\DD}(C_5\oplus C_5),\frac{s_{\DD}(C_3\oplus P_2\oplus C_5)}{2}\right)$
are non-zero multiples of each other,
we obtain that the sum of the terms $s_{\DD}(H)t(H,U)$ for $H$ being one of the graphs
$C_3\oplus P_6\oplus C_3$, $C_3\oplus P_4\oplus C_5$ and $C_5\oplus P_2\oplus C_5$
is equal to
\begin{align*}
  & \int_{[0,1]}
    \mbox{\scalebox{0.9}{$\begin{pmatrix} Ut_U^{C_3\oplus P_2}(x) \\ Ut_U^{C_5}(x) \end{pmatrix}^T
    \begin{pmatrix} s_{\DD}(C_3\oplus P_6\oplus C_3) & \frac{s_{\DD}(C_3\oplus P_4\oplus C_5)}{2} \\ \frac{s_{\DD}(C_3\oplus P_4\oplus C_5)}{2} & s_{\DD}(C_5\oplus P_2\oplus C_5) \end{pmatrix}
    \begin{pmatrix} Ut_U^{C_3\oplus P_2}(x) \\ Ut_U^{C_5}(x) \end{pmatrix}$}}\dd x \\
= & \int_{[0,1]}
    \mbox{\scalebox{0.9}{$\begin{pmatrix} \frac{z_3}{z_5}Ut_U^{C_5}(x) \\ Ut_U^{C_5}(x) \end{pmatrix}^T
    \begin{pmatrix} s_{\DD}(C_3\oplus P_6\oplus C_3) & \frac{s_{\DD}(C_3\oplus P_4\oplus C_5)}{2} \\ \frac{s_{\DD}(C_3\oplus P_4\oplus C_5)}{2} & s_{\DD}(C_5\oplus P_2\oplus C_5) \end{pmatrix}
    \begin{pmatrix} \frac{z_3}{z_5}Ut_U^{C_5}(x) \\ Ut_U^{C_5}(x) \end{pmatrix}$}}\dd x  \\
= & \int_{[0,1]}
    \mbox{\scalebox{0.9}{$\begin{pmatrix} z_3 \\ z_5 \end{pmatrix}^T
    \begin{pmatrix} s_{\DD}(C_3\oplus P_6\oplus C_3) & \frac{s_{\DD}(C_3\oplus P_4\oplus C_5)}{2} \\ \frac{s_{\DD}(C_3\oplus P_4\oplus C_5)}{2} & s_{\DD}(C_5\oplus P_2\oplus C_5) \end{pmatrix}
    \begin{pmatrix} z_3 \\ z_5 \end{pmatrix}$}} \frac{Ut_U^{C_5}(x)^2}{z_5^2} \dd x \ge 0.
\end{align*}    
Since the vectors $(z_3,z_5)$ and $\left(-s_{\DD}(C_5\oplus C_5),\frac{s_{\DD}(C_3\oplus P_2\oplus C_5)}{2}\right)$ are multiples of each other and the third condition of the lemma does not apply,
the sum of terms $s_{\DD}(H)t(H,U)$ for $H$ being $C_3\oplus P_2\oplus C_7$ and $C_5\oplus C_7$ is equal to
\begin{align*}
& \int_{[0,1]}\left(s_{\DD}(C_3\oplus P_2\oplus C_7)t_U^{C_3\oplus P_2}(x)+s_{\DD}(C_5\oplus C_7)t_U^{C_5}(x)\right)t_U^{C_7}(x)\dd x \\
= & \frac{1}{z_5} \int_{[0,1]}\left(s_{\DD}(C_3\oplus P_2\oplus C_7)z_3+s_{\DD}(C_5\oplus C_7)z_5\right)t_U^{C_5}(x)t_U^{C_7}(x)\dd x = 0.
\end{align*}
We conclude that $c^U_{\DD,12}\ge 0$ and so the deck $\DD$ is of Class III.
\end{proof}

\begin{lemma}
\label{lm:deck12-det}
Let $\DD$ be a $12$-deck such that its $10$-deck is of Class III, $s_{\DD}(P_2)>0$, $s_{\DD}(C_3\oplus C_3)=s_{\DD}(C_3\oplus P_2\oplus C_3)=0$ and $s_{\DD}(C_3\cup C_3)>0$.
Further suppose that $4s_{\DD}(C_3\oplus P_4\oplus C_3)s_{\DD}(C_5\oplus C_5)>s_{\DD}(C_3\oplus P_2\oplus C_5)^2$.
If $s_{\DD}(C_3\oplus C_9)>0$, then $\DD$ is of Class II.
Otherwise, $\DD$ is of Class I if and only if $s_{\DD}(C_{12})>0$,
i.e., if $s_{\DD}(C_{12})=s_{\DD}(C_3\oplus C_9)=0$, then $\DD$ is of Class III.
\end{lemma}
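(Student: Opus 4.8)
The plan is to follow the template of Lemmas~\ref{lm:deck12-det00}--\ref{lm:deck12-det++}, the new feature being that the \emph{strict} inequality $4s_{\DD}(C_3\oplus P_4\oplus C_3)s_{\DD}(C_5\oplus C_5)>s_{\DD}(C_3\oplus P_2\oplus C_5)^2$ makes the matrix
\[M=\begin{pmatrix} s_{\DD}(C_3\oplus P_4\oplus C_3) & \tfrac12 s_{\DD}(C_3\oplus P_2\oplus C_5)\\ \tfrac12 s_{\DD}(C_3\oplus P_2\oplus C_5) & s_{\DD}(C_5\oplus C_5)\end{pmatrix}\]
positive \emph{definite} (in particular $s_{\DD}(C_3\oplus P_4\oplus C_3)>0$ and $s_{\DD}(C_5\oplus C_5)>0$). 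By Corollary~\ref{cor:deck10-III-classify}, $s_{\DD}(C_\ell)=0$ for $\ell\in\{4,6,8,10\}$ and $s_{\DD}(C_3\oplus C_5)=s_{\DD}(C_3\oplus C_7)=0$, and since $s_{\DD}(C_3\oplus C_3)=s_{\DD}(C_3\oplus P_2\oplus C_3)=0$ we also have $s_{\DD}(H)=0$ whenever $H$ contains $C_3\oplus C_3$ or $C_3\oplus P_2\oplus C_3$. The first step is to record the resulting short list of principal graphs of size at most twelve that may still occur with positive multiplicity; these are precisely the graphs in Figure~\ref{fig:deck12-det++}.

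For the Class~II direction, assume $s_{\DD}(C_3\oplus C_9)>0$. I would apply Lemma~\ref{lm:core} with $k=11$, $m=1$, a $\delta\in(0,1)$ with $s_{\DD}(C_{12})\delta\le 1/2$, all $\gamma_\ell=0$, $\sigma_1=0$, $\tau_{1,3}=1$, $\tau_{1,9}=-1$ and $\tau_{1,5}=\tau_{1,7}=\tau_{1,11}=0$, obtaining a non-zero balanced kernel $U$. Since $\sigma_1=0$, every contribution coming from a bridge of length at least one vanishes, and since $t(C_\ell,U)=\gamma_\ell=0$ for odd $\ell\le 11$, every disjoint-union contribution vanishes; inspecting Figure~\ref{fig:deck12-det++} one checks that $t(H,U)=0$ for every principal graph of size at most twelve other than $C_{12}$ and $C_3\oplus C_9$, with $t(C_3\oplus C_9,U)=\tau_{1,3}\tau_{1,9}=-1$ by Lemma~\ref{lm:core-apply} and $t(C_{12},U)\le\delta$. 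Hence $c^U_{\DD,2}=\cdots=c^U_{\DD,10}=0$ and $c^U_{\DD,12}=s_{\DD}(C_3\oplus C_9)t(C_3\oplus C_9,U)+s_{\DD}(C_{12})t(C_{12},U)\le -1+1/2<0$, so $\DD$ is of Class~II.

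For the remaining direction, as in the proofs of Lemmas~\ref{lm:deck8-I} and~\ref{lm:deck10-I} it is enough to show that $\DD$ is of Class~III when $s_{\DD}(C_{12})=0$: when $s_{\DD}(C_{12})>0$ one removes every copy of $C_{12}$ from $\DD$, applies the Class~III conclusion to the resulting deck $\DD'$ (which satisfies the same hypotheses, has $s_{\DD'}(C_{12})=0$, and still has $s_{\DD'}(P_2)>0$ because $s_{\DD'}(C_3\cup C_3)=s_{\DD}(C_3\cup C_3)>0$), then uses $c^U_{\DD,\ell}=c^U_{\DD',\ell}$ for $\ell\le10$, $c^U_{\DD,12}=c^U_{\DD',12}+s_{\DD}(C_{12})t(C_{12},U)$ and Proposition~\ref{prop:cycle} to deduce Class~I; and when $s_{\DD}(C_{12})=0$, Lemma~\ref{lm:core-III} already shows $\DD$ is not of Class~I. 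So assume $s_{\DD}(C_{12})=s_{\DD}(C_3\oplus C_9)=0$ and fix a non-zero kernel $U$. If $U$ is not balanced then $c^U_{\DD,2}>0$; otherwise $c^U_{\DD,4}=0$ and $c^U_{\DD,6}=s_{\DD}(C_3\cup C_3)t(C_3,U)^2\ge0$ with equality iff $t(C_3,U)=0$; assuming this, $c^U_{\DD,8}=0$ (the only relevant $8$-edge principal graph is $C_3\cup C_5$, of density $t(C_3,U)t(C_5,U)=0$), and
\[c^U_{\DD,10}=s_{\DD}(C_5\cup C_5)t(C_5,U)^2+\int_{[0,1]}\begin{pmatrix} t^{C_3\oplus P_2}_U(x)\\ t^{C_5}_U(x)\end{pmatrix}^{\!T}\! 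M\begin{pmatrix} t^{C_3\oplus P_2}_U(x)\\ t^{C_5}_U(x)\end{pmatrix}\dd x\ge0.\]
The crucial point is that, $M$ being positive definite, $c^U_{\DD,10}=0$ forces $t^{C_3\oplus P_2}_U=0$ and $t^{C_5}_U=0$ almost everywhere; then $\langle Ut^{C_3}_U,Ut^{C_3}_U\rangle=\langle t^{C_3}_U,U^2t^{C_3}_U\rangle=\langle t^{C_3}_U,t^{C_3\oplus P_2}_U\rangle=0$, so $t^{C_3\oplus P_1}_U=0$ and hence $t^{C_3\oplus P_n}_U=0$ for all $n\ge1$, while $t^{C_5\oplus P_n}_U=U^nt^{C_5}_U=0$ for all $n\ge0$ and $t(C_5,U)=0$.

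The last step, which I expect to be the main obstacle, is to verify — running through the $12$-edge principal graphs in Figure~\ref{fig:deck12-det++} — that once $t(C_3,U)=0$, $t^{C_3\oplus P_n}_U=0$ for $n\ge1$ and $t^{C_5\oplus P_n}_U=0$ for $n\ge0$, and with $s_{\DD}(C_3\oplus C_9)=0$ eliminating the one problematic graph, \emph{every} such graph other than $C_{12}$ has density zero in $U$: a disjoint union contributes a factor $t(C_3,U)$ or $t(C_5,U)$, and $C_3\oplus P_6\oplus C_3$, $C_3\oplus P_4\oplus C_5$, $C_3\oplus P_2\oplus C_7$, $C_5\oplus P_2\oplus C_5$ and $C_5\oplus C_7$ all reduce, via~\eqref{eq:glue} and $t^{H\oplus P_n}_U=U^nt^H_U$, to inner products of the form $\langle t^{C_3\oplus P_n}_U,\cdot\rangle$ or $\langle t^{C_5\oplus P_n}_U,\cdot\rangle$, which vanish. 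Consequently $c^U_{\DD,12}=s_{\DD}(C_{12})t(C_{12},U)=0$, so all of $c^U_{\DD,2},\dots,c^U_{\DD,12}$ vanish; together with the non-zero kernel with all coefficients zero supplied by Lemma~\ref{lm:core-III} (applicable since no graph in $\DD$ then contains an even cycle of length at most twelve), this shows $\DD$ is of Class~III. The delicate part throughout is keeping track of the finite list of admissible principal graphs and confirming that $C_3\oplus C_9$, whose density $\int t^{C_3}_U\,t^{C_9}_U$ need not vanish even after all these reductions, is exactly the graph distinguished in the statement.
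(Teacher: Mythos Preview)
Your proof is correct and follows essentially the same route as the paper's: the same kernel from Lemma~\ref{lm:core} for the Class~II case, the same reduction to $s_{\DD}(C_{12})=0$ for Class~I, and the same positive-definiteness argument forcing $t_U^{C_3\oplus P_2}=t_U^{C_5}=0$ a.e.\ when $c^U_{\DD,10}=0$. Your extra step deducing $t_U^{C_3\oplus P_1}=0$ is valid but unnecessary here, since every connected $12$-edge principal graph in the list with a $C_3$ end-block has a bridge of length at least~$2$ from it, so $t_U^{C_3\oplus P_2}=0$ already suffices.
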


\begin{proof}
Fix a $12$-deck $\DD$ with the properties as supposed in the statement of the lemma.
Note that $s_{\DD}(C_{\ell})=0$ for $\ell=4,6,8,10$ and
$s_{\DD}(C_3\oplus C_{\ell})=0$ for $\ell=5,7$ by Corollary~\ref{cor:deck10-III-classify}.
We first show that if $s_{\DD}(C_3\oplus C_9)>0$, then $\DD$ is of Class II.
Apply Lemma~\ref{lm:core} with $k=11$, $m=1$, any $\delta\in (0,1)$ such that $s_{\DD}(C_{12})\delta\le 1/2$,
$\gamma_3=\gamma_5=\gamma_7=\gamma_9=\gamma_{11}=0$, $\sigma_1=0$, $\tau_{1,3}=1$, $\tau_{1,5}=\tau_{1,7}=\tau_{1,11}=0$ and $\tau_{1,9}=-1$,
to get a non-zero kernel $U$ with the properties listed in Lemma~\ref{lm:core}.
Every principal graph $H$ with at most twelve edges satisfies that $s_{\DD}(H)=0$ or $t(H,U)=0$
unless $H$ is $C_{12}$ or $C_3\oplus C_9$.
Since $c^U_{\DD,2}=\cdots=c^U_{\DD,10}=0$ and $c^U_{\DD,12}\le -1/2$, the $12$-deck $\DD$ is of Class II.

We now prove that if $s_{\DD}(C_3\oplus C_9)=0$, then $\DD$ is of Class I or Class III.
Following the line of arguments presented in the proofs of Lemmas~\ref{lm:deck8-I} and~\ref{lm:deck10-I},
it is enough to establish that the deck $\DD$ is of Class III when $s_{\DD}(C_{12})=0$;
note that if $s_{\DD}(C_{12})=0$, then the deck $\DD$ is not of Class I by Lemma~\ref{lm:core-III}.

Let $U$ be an arbitrary non-zero kernel.
The coefficient $c^U_{\DD,2}$ is positive unless $U$ is balanced.
So, we can assume that $U$ is balanced.
It follows that $c^U_{\DD,4}=0$ and $c^U_{\DD,6}=s_{\DD}(C_3\cup C_3)t(C_3\cup C_3,U)\ge 0$ and
the equality holds only if $t(C_3,U)=0$.
Hence, we assume that $t(C_3,U)=0$ in the rest of the proof.
It follows that $c^U_{\DD,8}=0$ and
\begin{align*}
c^U_{\DD,10} & = s_{\DD}(C_5\cup C_5)t(C_5,U)^2 \\
             & + \int_{[0,1]}
                \begin{pmatrix}t_U^{C_3\oplus P_2}(x) \\ t_U^{C_5}(x) \end{pmatrix}^T
                \begin{pmatrix}
                s_{\DD}(C_3\oplus P_4\oplus C_3) & \frac{s_{\DD}(C_3\oplus P_2\oplus C_5)}{2} \\
                \frac{s_{\DD}(C_3\oplus P_2\oplus C_5)}{2} & s_{\DD}(C_5\oplus C_5)
                \end{pmatrix}
                \begin{pmatrix}t_U^{C_3\oplus P_2}(x) \\ t_U^{C_5}(x) \end{pmatrix}\dd x.
\end{align*}
Since the matrix in the integral above has both eigenvalues positive (as $4s_{\DD}(C_3\oplus P_4\oplus C_3)s_{\DD}(C_5\oplus C_5)>s_{\DD}(C_3\oplus P_2\oplus C_5)^2$),
we conclude that $c^U_{\DD,10}\ge 0$ and
the equality holds only if $t^{C_3\oplus P_2}_U(x)=t_U^{C_5}(x)=0$ for almost every $x\in [0,1]$.
Hence, if $c^U_{\DD,10}=0$, then $s_{\DD}(H,U)=0$ or $t(H,U)=0$ for every $12$-edge principal graph $H$.
It follows that if if $c^U_{\DD,10}=0$, then $c^U_{\DD,12}=0$.
We conclude that the $12$-deck $\DD$ is of Class III.
\end{proof}

\begin{figure}
\begin{center}
\epsfxsize 14cm
\epsfbox{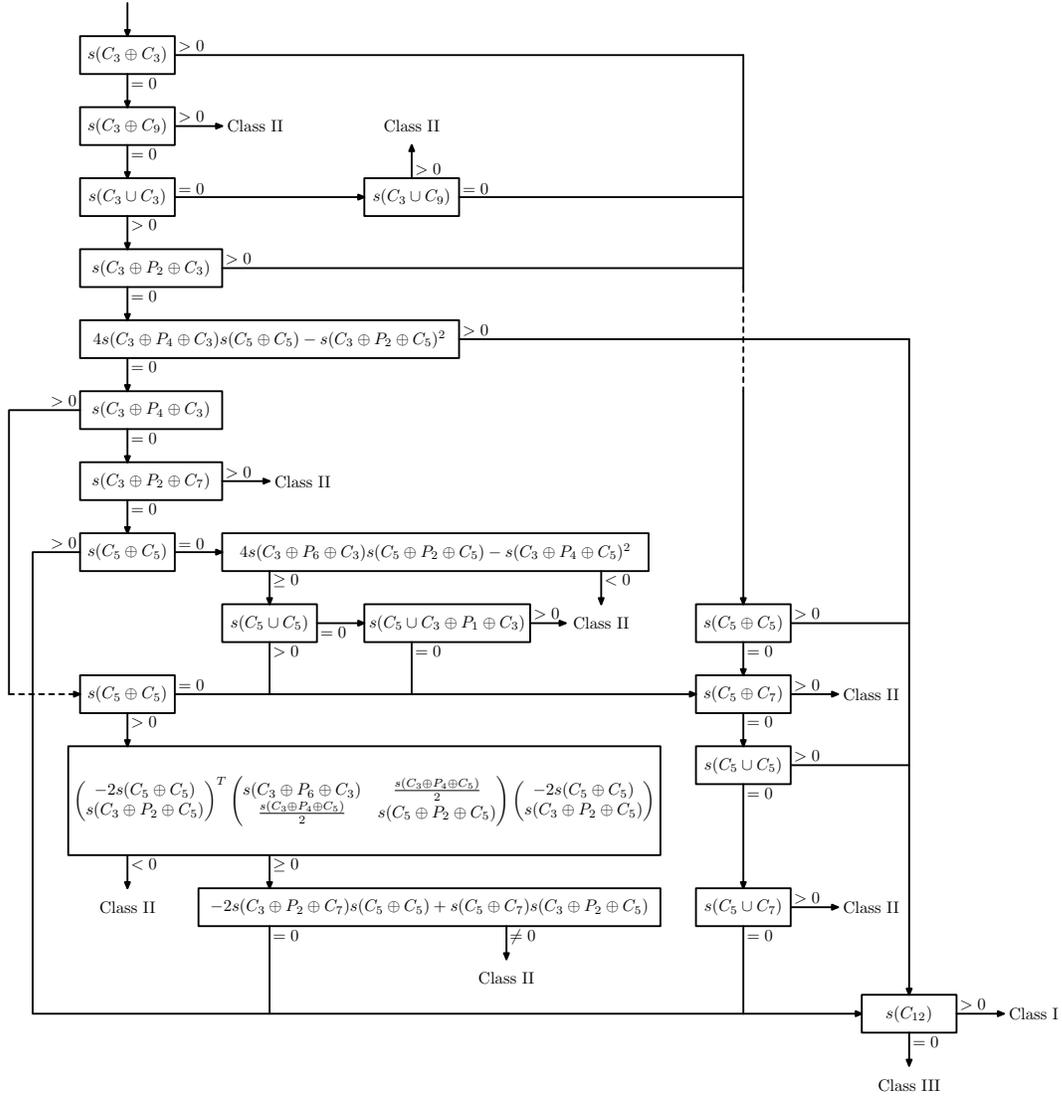}
\end{center}
\caption{The classification of $12$-decks with $s_{\DD}(P_2)>0$ whose $10$-deck is of Class III;
         we omit the subscript $\DD$ in the diagram.}
\label{fig:deck12}
\end{figure}

We are now ready to state the main theorem of this section.

\begin{theorem}
\label{thm:deck12}
Let $\DD$ be a $12$-deck with $s_{\DD}(P_2)>0$.
If the $10$-deck of $\DD$ is of Class I or of Class II, then $\DD$ is of Class I or of Class II, respectively.
Otherwise, the deck $\DD$ is of Class I, Class II or Class III as determined
in the diagram in Figure~\ref{fig:deck12}.
\end{theorem}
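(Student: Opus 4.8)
The plan is to proceed exactly as in the proofs of Theorems~\ref{thm:deck8} and~\ref{thm:deck10}: first reduce to the case where the $10$-deck of $\DD$ is of Class~III, and then argue that the case analysis carried out in Lemmas~\ref{lm:deck12-sC3C3}--\ref{lm:deck12-det} is exhaustive and agrees branch by branch with the diagram in Figure~\ref{fig:deck12}.

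First I would dispose of the easy cases. If the $10$-deck of $\DD$ is of Class~I then $\DD$ is of Class~I, and if the $10$-deck of $\DD$ is of Class~II then $\DD$ is of Class~II; both implications were recorded in Subsection~\ref{subsec:prelim3}, where it is observed that a deck whose $\ell'$-subdeck is of Class~I (respectively Class~II) inherits that class. It therefore remains to treat the situation where the $10$-deck of $\DD$ is of Class~III. Here I would invoke Corollary~\ref{cor:deck10-III-classify}, which tells us that $s_{\DD}(P_2)>0$, that $s_{\DD}(C_4)=s_{\DD}(C_6)=s_{\DD}(C_8)=s_{\DD}(C_{10})=0$, and that exactly one of four structural alternatives holds. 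These four alternatives are precisely the hypotheses of the lemmas of this section: the alternative $s_{\DD}(C_3\oplus C_3)>0$ is governed by Lemma~\ref{lm:deck12-sC3C3}; the alternative in which $s_{\DD}(C_3\oplus C_3)$, $s_{\DD}(C_3\oplus C_5)$, $s_{\DD}(C_3\oplus C_7)$, $s_{\DD}(C_3\cup C_3)$, $s_{\DD}(C_3\cup C_5)$, $s_{\DD}(C_3\cup C_7)$ all vanish is governed by Lemma~\ref{lm:deck12-sC3C3x}; the alternative with $s_{\DD}(C_3\cup C_3)>0$ and $s_{\DD}(C_3\oplus P_2\oplus C_3)>0$ by Lemma~\ref{lm:deck12-sC3P2C3}; and the remaining alternative, in which $s_{\DD}(C_3\cup C_3)>0$, $s_{\DD}(C_3\oplus P_2\oplus C_3)=0$ and $4s_{\DD}(C_3\oplus P_4\oplus C_3)s_{\DD}(C_5\oplus C_5)\ge s_{\DD}(C_3\oplus P_2\oplus C_5)^2$, is split among Lemmas~\ref{lm:deck12-det00}, \ref{lm:deck12-det0+}, \ref{lm:deck12-det+0}, \ref{lm:deck12-det++} and~\ref{lm:deck12-det} according to whether each of $s_{\DD}(C_3\oplus P_4\oplus C_3)$ and $s_{\DD}(C_5\oplus C_5)$ is zero or positive and, when both are positive, whether the symmetric $2\times 2$ matrix with diagonal entries $s_{\DD}(C_3\oplus P_4\oplus C_3)$, $s_{\DD}(C_5\oplus C_5)$ and off-diagonal entry $\tfrac12 s_{\DD}(C_3\oplus P_2\oplus C_5)$ has full rank. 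Since these subcases partition the fourth alternative, every $12$-deck whose $10$-deck is of Class~III falls under exactly one of Lemmas~\ref{lm:deck12-sC3C3}--\ref{lm:deck12-det}.

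The final step is a bookkeeping verification. I would trace each root-to-leaf path of the diagram in Figure~\ref{fig:deck12}, determine which of the four structural alternatives of Corollary~\ref{cor:deck10-III-classify} the path lies in, and confirm that the conjunction of the conditions along the path matches one of the Class~II hypotheses of the relevant lemma (so the leaf is labelled Class~II), or the ``$s_{\DD}(C_{12})>0$'' escape of that lemma (leaf labelled Class~I), or the ``$s_{\DD}(C_{12})=0$ with no Class~II condition active'' case (leaf labelled Class~III). I expect the main obstacle to be exactly this consistency check for the fourth alternative, since there the diagram must simultaneously encode the two binary choices on $s_{\DD}(C_3\oplus P_4\oplus C_3)$ and $s_{\DD}(C_5\oplus C_5)$, the rank test on the $2\times2$ matrix, and then the further quadratic-form conditions (involving $C_3\oplus P_6\oplus C_3$, $C_3\oplus P_4\oplus C_5$, $C_5\oplus P_2\oplus C_5$, $C_3\oplus P_2\oplus C_7$ and $C_5\oplus C_7$) that separate Class~II from Class~III inside Lemmas~\ref{lm:deck12-det00} and~\ref{lm:deck12-det++}; keeping track of which cycle frequencies are forced to vanish by Corollary~\ref{cor:deck10-III-classify}, and hence need not appear as explicit branches, is what makes the diagram manageable. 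Once the diagram is checked path by path, the theorem follows.
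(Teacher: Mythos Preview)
Your proposal is correct and follows exactly the same approach as the paper's own proof, which simply says to inspect the diagram in Figure~\ref{fig:deck12} and verify that every path to a Class~I, Class~II, or Class~III label matches the descriptions in Lemmas~\ref{lm:deck12-sC3C3}--\ref{lm:deck12-det}. Your write-up is in fact considerably more detailed than the paper's, spelling out explicitly how Corollary~\ref{cor:deck10-III-classify} partitions the Class~III $10$-decks among the lemmas and flagging where the bookkeeping is most delicate.
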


\begin{proof}
The proof follows by inspecting the diagram in Figure~\ref{fig:deck12} and verifying that
every path leading to the label Class I, Class II and Class III
is in line with the description given in Lemmas~\ref{lm:deck12-sC3C3}--\ref{lm:deck12-det}.
\end{proof}

\section{Conclusion} \label{sec:conclusion}

Our characterization of the possible behavior of the initial twelve terms of the polynomial
$t(H,1/2+\varepsilon U)+t(H,1/2-\varepsilon U)$
indicates that a complete characterization of locally common graphs would be complex.
Still, our results suggest the following two problems that could shed more light on a possible
characterization of locally common graphs.

\begin{problem}
\label{prob:deck}
Is it true that if two graphs $H$ and $H'$ have the same length $g$ of the shortest even cycle
and the same counts of principal graphs in their $g$-decks,
then both $H$ and $H'$ are either locally common or not locally common?
In other words, can it be determined whether $H$ is locally common based on the frequencies of principal graphs in the $g$-deck of $H$,
where $g$ is the length of the shortest even cycle in $H$?
\end{problem}

Problem~\ref{prob:deck} would follow from establishing that every deck containing an even cycle is of Class I or II;
we believe this to be the case but we were not able to find a short argument.
However, we do not have a suspected answer to offer to the next problem,
although Theorems~\ref{thm:deck8}, \ref{thm:deck10} and \ref{thm:deck12} suggests that the answer could also be positive.
In order to state the problem, we need to introduce a definition:
a graph $G$ is \emph{basic}
if every component of $G$ is a cycle or
isomorphic to $C_{\ell}\oplus P_n\oplus C_{\ell'}$ for a non-negative integer $n$ and some odd numbers $\ell$ and $\ell'$ (the value of
$\ell$, $\ell'$ and $n$ can be different for different components of $G$).

\begin{problem}
\label{prob:basic}
Is it true that if two graphs $H$ and $H'$ have the same length $g$ of the shortest even cycle
and the same counts of basic graphs in their $g$-decks,
then both $H$ and $H'$ are either locally common or not locally common?
In other words, can it be determined whether $H$ is locally common based on the frequencies of basic graphs in the $g$-deck of $H$,
where $g$ is the length of the shortest even cycle in $H$?
\end{problem}

\section*{Acknowledgement}

The authors would like to thank Martin Kure\v cka for his comments on the topics covered in this paper.

\bibliographystyle{bibstyle}
\bibliography{dcommon}

\begin{thebibliography}{10}
\providecommand{\url}[1]{\texttt{#1}}
\providecommand{\urlprefix}{URL }
\providecommand{\eprint}[2][]{\url{#2}}

\bibitem{BurR80}
S.~A. Burr and V.~Rosta: \emph{On the {R}amsey multiplicities of
  graphs---problems and recent results}, J. Graph Theory \textbf{4} (1980),
  347--361.

\bibitem{ConFS10}
D.~Conlon, J.~Fox and B.~Sudakov: \emph{An approximate version of {S}idorenko's
  conjecture}, Geom. Funct. Anal. \textbf{20} (2010), 1354--1366.

\bibitem{ConKLL18}
D.~Conlon, J.~H. Kim, C.~Lee and J.~Lee: \emph{Some advances on {S}idorenko's
  conjecture}, J. Lond. Math. Soc. \textbf{98} (2018), 593--608.

\bibitem{ConL18}
D.~Conlon and J.~Lee: \emph{{S}idorenko's conjecture for blow-ups}, to appear
  in Discrete Anal.

\bibitem{ConL17}
D.~Conlon and J.~Lee: \emph{Finite reflection groups and graph norms}, Adv.
  Math. \textbf{315} (2017), 130--165.

\bibitem{CsoHL19}
E.~{Cs{\'o}ka}, T.~{Hubai} and L.~{Lov{\'a}sz}: \emph{Locally common graphs}
  (2019), preprint arXiv:1912.02926.

\bibitem{CumY11}
J.~Cummings and M.~Young: \emph{Graphs containing triangles are not 3-common},
  J. Comb. \textbf{2} (2011), 1--14.

\bibitem{Erd62}
P.~Erd\H{o}s: \emph{On the number of complete subgraphs contained in certain
  graphs}, Magyar Tud. Akad. Mat. Kutat\'o Int. K\"ozl. \textbf{7} (1962),
  459--464.

\bibitem{ErdS84}
P.~Erd\H{o}s and M.~Simonovits: \emph{Cube-supersaturated graphs and related
  problems}, in: Progress in graph theory (1984), 203--218.

\bibitem{Fox07}
J.~Fox: \emph{There exist graphs with super-exponential ramsey multiplicity
  constant}, Journal of Graph Theory \textbf{57} (2008), 89--98.

\bibitem{FoxW17}
J.~Fox and F.~Wei: \emph{On the local approach to {S}idorenko's conjecture},
  Electronic Notes in Discrete Mathematics \textbf{61} (2017), 459--465.

\bibitem{Fra02}
F.~Franek: \emph{On {E}rd{\H{o}}s's conjecture on multiplicities of complete
  subgraphs: lower upper bound for cliques of size 6}, Combinatorica
  \textbf{22} (2002), 451--454.

\bibitem{FraR93}
F.~Franek and V.~R\"odl: \emph{{$2$}-colorings of complete graphs with a small
  number of monochromatic {$K_4$} subgraphs}, Discrete Math. \textbf{114}
  (1993), 199--203.

\bibitem{Gir79}
G.~Giraud: \emph{Sur le probl\`eme de {G}oodman pour les quadrangles et la
  majoration des nombres de {R}amsey}, J. Combin. Theory Ser. B \textbf{27}
  (1979), 237--253.

\bibitem{Goo59}
A.~W. Goodman: \emph{On sets of acquaintances and strangers at any party},
  Amer. Math. Monthly \textbf{66} (1959), 778--783.

\bibitem{Hat10}
H.~Hatami: \emph{Graph norms and {S}idorenko's conjecture}, Israel J. Math.
  \textbf{175} (2010), 125--150.

\bibitem{HatHKNR12}
H.~Hatami, J.~Hladk\'y, D.~Kr\'a\v{l}, S.~Norine and A.~Razborov:
  \emph{Non-three-colourable common graphs exist}, Combin. Probab. Comput.
  \textbf{21} (2012), 734--742.

\bibitem{JagST96}
C.~Jagger, P.~\v{S}\v{t}ov\'\i \v{c}ek and A.~Thomason: \emph{Multiplicities of
  subgraphs}, Combinatorica \textbf{16} (1996), 123--141.

\bibitem{KraNNVW+}
D.~Kr{\'a}l', J.~A. Noel, S.~Norin, J.~Volec and F.~Wei: \emph{Non-bipartite
  $k$-common graphs}, to appear in Combinatorica.

\bibitem{LiS11}
J.~L. Li and B.~Szegedy: \emph{On the logarithimic calculus and {S}idorenko's
  conjecture}, to appear in Combinatorica.

\bibitem{Lov11}
L.~Lov\'{a}sz: \emph{Subgraph densities in signed graphons and the local
  {S}imonovits-{S}idorenko conjecture}, Electron. J. Combin. \textbf{18}
  (2011), P127, 21pp.

\bibitem{Lov12}
L.~Lov\'{a}sz: Large networks and graph limits, \emph{AMS Colloquium
  Publications}, volume~60, 2012.

\bibitem{LovS06}
L.~Lov\'{a}sz and B.~Szegedy: \emph{Limits of dense graph sequences}, J.
  Combin. Theory Ser. B \textbf{96} (2006), 933--957.

\bibitem{Nie}
S.~Nie\ss: \emph{Counting monochromatic copies of {$K_4$}: a new lower bound
  for the {R}amsey multiplicity problem} (2012), preprint arXiv:1207.4714.

\bibitem{Raz07}
A.~A. Razborov: \emph{Flag algebras}, J. Symbolic Logic \textbf{72} (2007),
  1239--1282.

\bibitem{Sid93}
A.~Sidorenko: \emph{A correlation inequality for bipartite graphs}, Graphs
  Combin. \textbf{9} (1993), 201--204.

\bibitem{Sid96}
A.~Sidorenko: \emph{Randomness friendly graphs}, Random Structures Algorithms
  \textbf{8} (1996), 229--241.

\bibitem{Sid89}
A.~F. Sidorenko: \emph{Cycles in graphs and functional inequalities}, Mat.
  Zametki \textbf{46} (1989), 72--79.

\bibitem{Sid91}
A.~F. Sidorenko: \emph{Inequalities for functionals generated by bipartite
  graphs}, Diskret. Mat. \textbf{3} (1991), 50--65.

\bibitem{Spe11}
K.~Sperfeld: \emph{On the minimal monochromatic {$K_4$}-density} (2011),
  preprint arXiv:1106.1030.

\bibitem{Sze15}
B.~Szegedy: \emph{An information theoretic approach to {S}idorenko's
  conjecture} (2015), preprint arXiv:1406.6738.

\bibitem{Tho89}
A.~Thomason: \emph{A disproof of a conjecture of {E}rd{\H{o}}s in {R}amsey
  theory}, J. London Math. Soc. (2) \textbf{39} (1989), 246--255.

\bibitem{Tho97}
A.~Thomason: \emph{Graph products and monochromatic multiplicities},
  Combinatorica \textbf{17} (1997), 125--134.

\end{thebibliography}

\end{document}